\newtheorem{theorem}{Theorem}
\newtheorem{lemma}{Lemma}
\newtheorem{theoremA}{Theorem}
\theoremstyle{definition}
\newtheorem{prop}{Proposition}
\newtheorem{remark}{Remark}
\newtheorem{definition}{Definition}
\numberwithin{equation}{section}
\begin{document}
%\vspace*{5mm}

\thispagestyle{empty}

\begin{center}
{\bf\Large On Solutions of Systems of Differential Equations  on Half-Line with Summable Coefficients}
\end{center}

\begin{center}
{\bf\large Maria Kuznetsova\footnote{National Research Saratov State University,  Astrakhanskaya Street 83, Saratov, 410012, Russia. e-mail: {\it kuznetsovama@info.sgu.ru}}}
\end{center}

{\bf Abstract.} We consider a system of differential equations and obtain its solutions with exponential asymptotics and analyticity with respect to the spectral parameter.
Solutions of such type have importance in studying spectral properties of differential operators.
Here, we consider the system of first-order differential equations 
on a half-line with summable coefficients, containing a nonlinear dependence on the spectral parameter.
We obtain fundamental systems of solutions with analyticity in certain sectors, in which it is possible to apply the method of successive approximations. 
We also construct non-fundamental systems of solutions with analyticity in a large sector, including two previously considered neighboring sectors.
The obtained results admit applications in studying inverse spectral problems for the higher-order differential operators with distribution coefficients.
\smallskip

{\bf Key words}: systems of differential equations, fundamental systems of solutions, summable coefficients, asymptotic formulae, nonlinear dependence on a parameter

\smallskip
2010 Mathematics Subject Classification: 34E05, 34A30, 34A45

\bigskip
{\color{blue} This is an extended version of the paper accepted for publication in Lobachevskii Journal of Mathematics. 
The present version differs by the detailed proofs of Lemma~\ref{theta lemma}, Theorem~\ref{theta theorem}, and Theorem~\ref{neighboring}.
We also added Section~\ref{n=2} and Appendix~\ref{appendix B}, that are absent in the journal version.
}

\section{Introduction}
In this paper, we study the system of differential equations on the half-line $x \ge 0$
\begin{equation} \label{sys}
\mathrm{\bf y}' = \big(\lambda F(x) + A(x) + C(x, \lambda)\big) \mathrm{\bf y}, \quad \mathrm{\bf y} = [y_j(x)]_{j=1}^n,
\end{equation}
where $\lambda \in \mathbb C_0$ is the spectral parameter, $\mathbb C_0 := \mathbb C \setminus \{ 0\}.$ Here, $F,$ $A,$ and $C$ are matrix functions of order $n$ satisfying the following conditions.
\begin{enumerate}[I.]
\item $F(x) = \rho(x) B,$ where $\rho \in L[0, T)$ for any $T > 0$ and $\rho(x) > 0$ a.e., while $B$ is a constant diagonal matrix:
 $ B= \mathrm{ diag} \{ b_j \}_{j=1}^n,$ $b_j \in \mathbb C$ for $j = \overline{1, n}.$

\item $A(x) =[a_{jk}(x)]_{j,k=1}^n,$ where $a_{jk} \in L[0, \infty)$ for $j,k=\overline{1, n}.$

\item $C(x, \lambda) = [c_{jk}(x,\lambda)]_{j,k=1}^n,$ where each component $c_{jk}(x,\lambda)$ is a holomorphic mapping of $\lambda \in \mathbb C_0$ to $L[0, \infty),$ and
\begin{equation} \label{C con}
\| C(\cdot, \lambda)\|_{L[0, \infty)} := \max_{j,k=\overline{1, n}} \| c_{jk}(\cdot,\lambda) \|_{L[0, \infty)} \to 0, \quad \lambda \to \infty.
\end{equation}
\end{enumerate}
Solutions of system~\eqref{sys} are considered in the class of vector functions $\mathrm{\bf y} = [y_j(x)]_{j=1}^n$ such that $y_j \in AC[0, T]$ for any $T > 0,$ $j=\overline{1, n}.$
 
We aim to obtain solutions of system~\eqref{sys} possessing certain asymptotics and analyticity with respect to the spectral parameter. 
Fundamental systems of solutions (FSS) with the mentioned properties is an important tool in the spectral theory of differential operators, see~\cite{nai,tamarkin,nova,bond, beals,yur}.
Herewith, construction of FSS for the $n$-th order differential equation with the spectral parameter   
\begin{equation} \label{n-th order}
u^{(n)} + p_1(x) u^{(n-1)} + \ldots + p_n(x) u = \lambda^n \rho^n(x) u
\end{equation}
 can be reduced to construction of FSS for first-order system~\eqref{sys}, where
\begin{equation} \label{partic C}
F(x) = \rho(x) \mathrm{diag}\,\{b_j\}_{j=1}^n, \quad b_j = \exp\Big(\frac{2 \pi i j}{n}\Big), \; j=\overline{1, n},\quad C(x, \lambda) = \sum_{\nu=1}^{n-1} C_\nu(x) \lambda^{-j},
\end{equation}
while the elements of $A(x)$ are proportional to $p_1(x).$ %In the case $p_1 = 0,$ after reduction to~\eqref{sys}, we would have $A\equiv 0.$

Systems of first-order differential equations under various assumptions on the coefficients were studied in~\cite{birkhoff,tamarkin,langer,vagabov,malamud,beals,rykh,sav,yurko-sys, sav sad,stud,kosarev, surveys} and other works. 
We distinguish the classical case when, in notations of system~\eqref{sys}, the coefficients of $A$ are absolutely continuous, see~\cite{birkhoff, tamarkin, langer}. In this case, for obtaining the FSS with certain asymptotics and analyticity with respect to the spectral parameter, one can apply the standard method of successive approximations, that requires the related integral operators to be contraction mappings. Then, the FSS are obtained for a sufficiently large $|\lambda| \ge \lambda_0 >0,$ separately for $\lambda$ in each closed sector $\overline{\Gamma_\kappa}$ for which the values $\{ b_j \}_{j=1}^n$ can be renumbered so that 
\begin{equation} \label{numer1}
\mathrm{Re}\, (\lambda b_1) \ge \mathrm{Re}\, (\lambda b_2) \ge \ldots \ge \mathrm{Re}\,(\lambda b_n), \quad \lambda \in \overline{\Gamma_\kappa}.
\end{equation}

Here, we study system~\eqref{sys} in the non-classical case when the coefficients of $A$ are only summable.
It is more difficult, because, in general, the corresponding integral operators are not contraction mappings even for large $\lambda \in \overline{\Gamma_\kappa}.$ 
To obtain the FSS in this case, 
one should apply a modified method of successive approximations~\cite{rykh, sav, sav sad,vagabov}, proving by rigorous estimates that the squares of the integral operators are contraction mappings. The mentioned works~\cite{rykh, sav, sav sad,vagabov} concern systems of differential equations on a finite interval.
In contrast to them, we study system~\eqref{sys} on the half-line $x \ge 0.$ 

The non-classical case is crucial for studying $n$-th order differential equations~\eqref{n-th order} with distribution coefficients, see~\cite{mirz-1,mirz-2,bond}. 
The regularization approach~\cite{mirz-1,mirz-2} allows interpreting such equations as systems~\eqref{sys} with summable coefficients, wherein the matrices $F$ and $C$ have form~\eqref{partic C}. 
In relation with this, in~\cite{sav}, there were constructed the FSS of systems~\eqref{sys} on a finite interval under conditions similar to I--III.
This result was used for obtaining the FSS of equations with distribution coefficients, those, subsequently, were applied in studying spectral properties of the corresponding differential operators, see~\cite{bond, bond2, bond3}.

As far as we know, construction of the FSS in the case of a half-line was considered in detail only in~\cite{stud}, wherein the results were obtained for absolutely continuous coefficients of $A(x).$ That construction was applied for studying inverse spectral problems, see~\cite{yur,yurko-sys}.  Here, we study the more difficult non-classical case. Moreover, we use the specifics of the half-line and for $\lambda \in \overline{\Gamma_\kappa},$ construct a family of the FSS having exponential asymptotics for $x \ge \alpha$ and determined for $|\lambda|\ge\lambda_\alpha>0,$ where $\alpha \ge 0$ is a parameter.
Imposing the condition
\begin{equation} \label{C alpha}
\exists \varphi(\alpha)>0, \; \alpha \ge 0 \colon \  \varphi(\alpha) \to 0,\; \sup_{|\lambda|\ge \varphi(\alpha)} \| C(\cdot, \lambda)\|_{L[\alpha, \infty)} \to 0, \quad \alpha \to \infty,
\end{equation}
we get that $\lambda_\alpha \to 0$ as $\alpha \to \infty.$ Thus, we obtain the FSS analytic at $\lambda \in {\Gamma_\kappa}$ arbitrarily close to $0.$
Condition~\eqref{C alpha} is quite general to include the matrices $C$ in~\eqref{partic C} with $\| C_\nu \|_{L[0, \infty)} < \infty,$ $\nu=\overline{1, n-1},$ that are obtained after regularization of $n$-th order equations with distribution coefficients, see Proposition~\ref{C prop}.

Note that the unboundedness of $x \ge 0$ does not allow us consideration of shifted sectors used in~\cite{sav, nai, sav sad},  containing the boundaries between neighboring sectors $\Gamma_\kappa.$  Instead of this, we introduce large sectors $\Omega_m$ including two neighboring sectors $\Gamma_\kappa$ and construct non-fundamental systems of solutions analytic at $\lambda \in \Omega_m.$ 
The properties of these systems are sufficient for solving the inverse spectral problems considered in~\cite{yur, bond3} in the case of a half-line.
 
The paper is organized as follows. In Section~\ref{prelim}, we introduce necessary objects and justify application of the method of successive approximations, see Lemma~\ref{main lemma}. In Section~\ref{fss sec}, we obtain the FSS with the needed properties, reducing~\eqref{sys} to a system of integral equations, see Theorem~\ref{fss}. 
 Under additional conditions, we prove that the residual members from asymptotics of Theorem~\ref{fss} are square summable with respect to $\lambda,$ see Theorem~\ref{complement}.
 In Section~\ref{large sec}, we construct the non-fundamental systems of solutions analytic at $\lambda$ from the large sector, see Theorem~\ref{neighboring}. 
 In Section~\ref{n=2}, we give application of our results to a second order equation.
Appendix~\ref{appendix A} contains the detailed proof of auxiliary Theorem~\ref{theta theorem}.
 In Appendix~\ref{appendix B}, we provide the basic facts on holomorphic mappings and obtain Proposition~\ref{hol} that is used in the proof of Lemma~\ref{main lemma}.
 
The main results of the paper are Theorems~\ref{fss}--\ref{neighboring}. They can be applied in studying inverse spectral problems for the higher-order operators with distribution coefficients.

\section{Preliminaries} \label{prelim}
Introduce the function $p(x) = \int_0^x \rho(t) \, dt$ and the matrix $D(x) =[d_{jk}(x)]_{j,k=1}^n,$
where 
$$d_{jk}(x) = \left\{\begin{array}{cc}
a_{jk}(x), & b_j = b_k, \\
0, &  b_j \ne b_k,
\end{array}\right.
\quad j,k = \overline{1, n}, \quad x \ge 0.
 $$
For the parameter $\alpha \ge 0,$ consider the matrix-function $M_\alpha(x)= [m_{jk}(x)]_{j,k=1}^n$ of order $n$ that is the solution of the initial-value problem 
\begin{equation} \label{M init value}
M'_\alpha(x) = D(x) M_\alpha(x),\; x \ge 0, \quad M_\alpha(\alpha) = I,
\end{equation}
where $I$ is the identity matrix. Here and below, we denote the dependence on $\alpha$ by the last lower index and omit it for brevity if the obtained notation has two or more indices.

The existence and the uniqueness of $M_\alpha(x)$ along with some its properties  is ensured by the following proposition, being a consequence of the results obtained in~\cite{sav}.
\medskip

\begin{prop} \label{prop M}
There exists a unique solution $M_\alpha(x) = [m_{jk}(x)]_{j,k=1}^n$ of initial-value problem~\eqref{M init value} whose components belong to $AC[0, \infty).$  
It has the following properties.

\begin{enumerate}
\item For each $x \ge 0,$ the matrix $M_\alpha(x)$ is invertible, and the components of the matrix $M^{-1}_\alpha(x) = [\tilde m_{jk}(x)]_{j,k=1}^n$ belong to $AC[0, \infty).$ 

\item If $b_j \ne b_k$ for $j, k \in \overline{1, n},$ then $m_{jk} \equiv \tilde m_{jk} \equiv 0.$

\item \begin{equation} \label{M alpha}
\max_{j,k = \overline{1, n}} \sup_{x \ge \alpha}|m_{jk}(x)| \le e^a, \quad \max_{j,k = \overline{1, n}} \sup_{x \ge \alpha}|\tilde m_{jk}(x)| \le e^a,\quad a := n \| A\|_{L[0, \infty)}.
\end{equation}

\item The matrices $M_\alpha(x)$ and $B$ commute. 

\item If the numbers $b_j,$ $j=\overline{1, n},$ are distinct, then we have $M_\alpha(x) = \mathrm{diag}\{ \exp(\int_\alpha^x  a_{jj}(t) \, dt)\}_{j=1}^n.$
\end{enumerate}
\end{prop}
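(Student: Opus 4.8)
The plan is to rewrite the initial-value problem \eqref{M init value} in the equivalent integral form
\[
M_\alpha(x) = I + \int_\alpha^x D(t)\, M_\alpha(t)\, dt
\]
and to solve it by the method of successive approximations. Since every entry $a_{jk}$, and hence every $d_{jk}$, belongs to $L[0,\infty)$, the resulting Neumann series converges uniformly on compact subsets of $[0,\infty)$; its sum is a solution whose entries, being indefinite integrals of summable functions, lie in $AC[0,\infty)$. Uniqueness would follow by applying Gr\"onwall's inequality to the difference of two solutions. This settles existence, uniqueness and absolute continuity.

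For the structural assertions I would exploit that $D$ is block-diagonal with respect to the partition of $\{1,\dots,n\}$ into the level sets of $j\mapsto b_j$: by definition $d_{jk}=0$ whenever $b_j\ne b_k$. Regrouping the indices by equal values of $b_j$ turns $D$ into a genuine block-diagonal matrix, so the $(j,k)$ entry of each successive approximation vanishes whenever $b_j\ne b_k$; passing to the limit gives $m_{jk}\equiv 0$ for $b_j\ne b_k$. The same partition yields $BD=DB$, since $(BD-DB)_{jk}=(b_j-b_k)d_{jk}=0$; then $Q:=M_\alpha B-BM_\alpha$ solves $Q'=DQ$, $Q(\alpha)=0$, whence $Q\equiv 0$ by uniqueness, which is the commutativity of $M_\alpha$ and $B$. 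For invertibility I would use Liouville's formula $\det M_\alpha(x)=\exp\!\big(\int_\alpha^x \operatorname{tr} D(t)\,dt\big)$, where $\operatorname{tr} D=\sum_j d_{jj}=\sum_j a_{jj}\in L[0,\infty)$, so the determinant never vanishes and $M_\alpha(x)$ is invertible for every $x$. Setting $N:=M_\alpha^{-1}$, differentiation of $NM_\alpha=I$ gives $N'=-ND$, $N(\alpha)=I$, again a linear system with summable coefficient; hence the entries $\tilde m_{jk}$ are absolutely continuous and, by the same block argument, vanish when $b_j\ne b_k$.

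To obtain the bounds \eqref{M alpha}, set $\mu(x):=\max_{j,k}|m_{jk}(x)|$. From the integral equation one gets $\mu(x)\le 1+\int_\alpha^x h(t)\mu(t)\,dt$ with $h(t)=\max_j\sum_l |d_{jl}(t)|$, and Gr\"onwall's inequality yields $\mu(x)\le\exp\!\big(\int_\alpha^x h(t)\,dt\big)$ for $x\ge\alpha$. The exponent is controlled by the row-wise $L^1$-mass of $D$, which does not exceed that of $A$; this is where the constant $a=n\|A\|_{L[0,\infty)}$ enters. The identical estimate applied to $N$, via $N'=-ND$, gives the second inequality in \eqref{M alpha}. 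Property (5) is then immediate: if the $b_j$ are distinct, every block is $1\times1$, so $D=\operatorname{diag}\{a_{jj}\}_{j=1}^n$, the system decouples into the scalar problems $m_{jj}'=a_{jj}m_{jj}$, $m_{jj}(\alpha)=1$, with solutions $\exp(\int_\alpha^x a_{jj})$, while the off-diagonal entries vanish by the block argument.

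I expect the one delicate point to be extracting the \emph{sharp} constant $e^a$ in \eqref{M alpha} rather than a cruder exponential: a naive Gr\"onwall estimate bounds the exponent by a multiple of $n^2\|A\|_{L[0,\infty)}$, and reducing this to $a=n\|A\|_{L[0,\infty)}$ requires using the block structure of $D$ (so that only the $L^1$-mass carried by each individual row of $A$ contributes) together with a careful choice of matrix norm. Everything else is routine linear-ODE theory for summable coefficients, and for the fine estimate I would follow the argument of~\cite{sav}.
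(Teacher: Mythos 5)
The paper does not actually prove this proposition --- it is quoted as a consequence of results in~\cite{sav} --- so your self-contained write-up is inherently a different route, and most of it is fine. The Picard iteration for $M_\alpha(x)=I+\int_\alpha^x D(t)M_\alpha(t)\,dt$, Liouville's formula $\det M_\alpha(x)=\exp\bigl(\int_\alpha^x \operatorname{tr}D(t)\,dt\bigr)$, the equation $N'=-ND$ for the inverse, the block-diagonal structure of $D$ for item~2, and the uniqueness argument for $M_\alpha B-BM_\alpha$ (which, incidentally, already follows from item~2, since $B$ is scalar on each block) correctly settle existence, uniqueness, absolute continuity and items~1, 2, 4, 5.

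The genuine gap is item~3, and it is not merely the "delicate point" you defer to~\cite{sav}: the repair you sketch cannot work. With the paper's own convention $\|A\|_{L[0,\infty)}=\max_{j,k}\|a_{jk}\|_{L[0,\infty)}$ (see~\eqref{C con}), your Gr\"onwall majorant $h(t)=\max_j\sum_l|d_{jl}(t)|$ only satisfies $\int_\alpha^\infty h(t)\,dt\le\sum_{j,l}\|a_{jl}\|_{L[0,\infty)}\le n^2\|A\|_{L[0,\infty)}$, and the "row-wise $L^1$-mass'' of $D$ is \emph{not} bounded by $n\max_{j,k}\|a_{jk}\|_{L[0,\infty)}$ when the entries of a row are supported on disjoint sets. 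In fact the inequality $\sup_{x\ge\alpha}|m_{jk}(x)|\le e^{n\|A\|}$ fails under this reading of the norm: take $n=2$, $b_1=b_2$ (so $D=A$ and the block structure gives nothing), $\alpha=0$, and $a_{11}=c\chi_{[0,1]}$, $a_{12}=c\chi_{[1,2]}$, $a_{21}=c\chi_{[2,3]}$, $a_{22}=c\chi_{[3,4]}$ with $c>1$; then $\|A\|_{L[0,\infty)}=c$, $a=2c$, while a direct computation of the four constant-coefficient pieces gives $m_{21}(4)=c\,e^{2c}>e^{a}$. So no choice of matrix norm rescues the constant $e^{n\|A\|}$; the bound~\eqref{M alpha} has to be read with a stronger norm (e.g. $a=\sum_{j,k}\|a_{jk}\|_{L[0,\infty)}$, or the integrated row-sum norm $\int_0^\infty\max_j\sum_k|a_{jk}(t)|\,dt$, for either of which your Gr\"onwall argument yields $e^{a}$ verbatim), or with $e^{n^2\|A\|}$ in place of $e^{n\|A\|}$. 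Since $e^{a}$ only ever enters the rest of the paper as an unspecified constant depending on $A$, the estimate $e^{n^2\|A\|_{L[0,\infty)}}$ that your argument \emph{does} prove suffices everywhere; but as written, your item~3 promises a sharpening that does not exist.
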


Introduce the matrices $Q_\alpha(x) = [q_{jk}(x)]_{j,k=1}^n$ and $R_\alpha(x, \lambda) = [r_{jk}(x,\lambda)]_{j,k=1}^n$  as follows:
$$Q_\alpha(x) = M_\alpha^{-1}(x)(A(x) - D(x)) M_\alpha(x),\quad R_\alpha(x, \lambda) = M_\alpha^{-1}(x) C(x, \lambda) M_\alpha(x),$$
where $x \ge 0$ and $\lambda \in \mathbb C_0.$ 
Using property~2 of Proposition~\ref{prop M}, it is easy to see that if $b_j = b_k$ for $j,k \in \overline{1, n},$ then $q_{jk} \equiv 0.$  By~\eqref{M alpha}, we also have 
\begin{equation} \label{QR}
  \| Q_\alpha\|_{L[\alpha, \infty)} \le e^{2a} \| A - D\|_{L[\alpha, \infty)}, \quad \| R_\alpha(\cdot, \lambda)\|_{L[\alpha, \infty)} \le e^{2a}\gamma_\alpha(\lambda), \quad \alpha \ge 0, \; \lambda \in \mathbb C_0,
\end{equation}
 where  we put $\gamma_\alpha(\lambda) = \| C(\cdot, \lambda) \|_{L[\alpha, \infty)}.$
Condition~\eqref{C con} yields that $\gamma_\alpha(\lambda) \to 0$ uniformly on $\alpha \ge 0$ as $\lambda \to \infty.$ 
As well, for any fixed $\alpha \ge 0,$ the function $\gamma_\alpha(\lambda)$ is continuous on $\mathbb C_0,$ since the elements of $C(x, \lambda)$ are holomorphic mappings of $\lambda \in \mathbb C_0$ to $L[0, \infty).$  
%%%%%%%%%%%%%%%%%%%%%%%%%%%%%%%%%

Further, we need to study the behavior of certain values depending additionally on the parameters $k \in \overline{1, n}$ and $\omega \in \mathbb C.$ Let $\Lambda \subset \mathbb C_0$ be a domain such that 
\begin{equation} \label{omega numer}
\mathrm{Re}\,(\lambda b_j) \ge \mathrm{Re}\,(\lambda \omega) \ge \mathrm{Re}\,(\lambda b_l), \quad j=\overline{1, k-1}, \; l=\overline{k, n}, \quad \lambda \in \overline{\Lambda}.
\end{equation}
For $j,l \in \overline{1,n}$ and $\alpha \ge 0,$ consider the following functions of $s, x \ge\alpha$ and $\lambda \in \overline{\Lambda}:$
\begin{equation} \left.
\begin{array}{c}
\displaystyle\nu_{jl}(s, x, \lambda) =\int_{t_1}^{t_2} q_{jl}(t) 
\exp\big(\lambda g_{jl}(s,x,t)\big)\, dt,\\[3mm]
\displaystyle\varkappa_{jl}(s, x, \lambda) = \int_{t_1}^{t_2} r_{jl}(t, \lambda) \exp\big(\lambda g_{jl}(s,x,t)\big)\, dt, \quad \lambda \ne 0,
\end{array} \right\}
\label{nu-mu}
\end{equation}
where  $g_{jl}(s,x,t) := (b_l - \omega)(p(t)-p(s)) + (b_j - \omega)(p(x)-p(t))$ and
\begin{equation} \label{rule} 
(t_1, t_2) := \left\{
\begin{array}{cc}
(x, s), & j,l < k, \\
\big(\max\{x,s\}, \infty\big), & j< k \le l, \\
\big(\alpha, \min\{x, s\}\big), & l< k \le j, \\
(s, x), & k \le j, l.
\end{array}\right.
\end{equation}
In~\eqref{nu-mu} and below, we agree that in the case $t_1 \ge t_2,$ an integral $\int_{t_1}^{t_2}$ is considered to be $0.$ 
In the opposite case $t_1 < t_2,$  for the variable of integration  $t$ in~\eqref{nu-mu}, we have that $t \ge s \Leftrightarrow l \ge k$ and $t \le x \Leftrightarrow j \ge k.$ 
Taking into account~\eqref{omega numer} and that $p(x)$ is increasing, we obtain the inequality
\begin{equation} \label{g}
\mathrm{Re} \, \big(\lambda g_{jl}(s,x,t)\big)  \le 0, \quad t_1 < t < t_2, \quad s,x \ge \alpha, \;\lambda \in \overline{\Lambda}.
\end{equation}
In particular, it yields that the integrals in~\eqref{nu-mu} are well-defined as  the Lebesgue integrals.

\begin{lemma} \label{theta lemma}
For $\lambda \in \overline{\Lambda},$ denote
$
\displaystyle\theta_{\alpha}(\lambda) = \sup_{\substack{j,l \in \overline{1, n}; \\ s, x \ge\alpha}} |\nu_{jl}(s, x, \lambda)|.
$
If $\lambda \to \infty,$ then we have
${\theta_\alpha(\lambda) \to 0}$ uniformly on $\alpha.$ 
As well, the following estimates hold:
\begin{equation} \label{est}
\sup_{\substack{j,l = \overline{1, n}; \\ s, x \ge\alpha}} |\varkappa_{jl}(s,x,\lambda)| \le \gamma_\alpha(\lambda) e^{2a}, \quad
\theta_{\alpha}(\lambda) \le a e^{2a}, \quad \lambda \in \overline{\Lambda}\setminus\{ 0\}.
\end{equation}
\end{lemma}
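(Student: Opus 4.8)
The plan is to treat the two assertions separately: the estimates \eqref{est} follow at once from \eqref{g} and \eqref{QR}, while the uniform decay of $\theta_\alpha(\lambda)$ is a Riemann--Lebesgue type statement that needs more care because of the $\alpha$-dependence of $Q_\alpha$. For \eqref{est} I would first note that in every case of \eqref{rule} the integration interval satisfies $(t_1,t_2)\subseteq[\alpha,\infty)$, and that \eqref{g} gives $|\exp(\lambda g_{jl}(s,x,t))|\le 1$ on $(t_1,t_2)$. Hence $|\varkappa_{jl}(s,x,\lambda)|\le\int_{t_1}^{t_2}|r_{jl}(t,\lambda)|\,dt\le\|R_\alpha(\cdot,\lambda)\|_{L[\alpha,\infty)}$ and $|\nu_{jl}(s,x,\lambda)|\le\|Q_\alpha\|_{L[\alpha,\infty)}$; applying \eqref{QR} together with the elementary bound $\|A-D\|_{L[\alpha,\infty)}\le\|A\|_{L[0,\infty)}=a/n\le a$ yields both inequalities in \eqref{est}.

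For the uniform convergence, the key difficulty is that the $q_{jl}$ are entries of $Q_\alpha$ and so depend on $\alpha$; a single approximation cannot be fixed once and for all. To remove this dependence I would invoke the multiplicative identity $M_\alpha(x)=M_0(x)M_0^{-1}(\alpha)$, which follows from the uniqueness in \eqref{M init value}, to write $Q_\alpha(x)=M_0(\alpha)Q_0(x)M_0^{-1}(\alpha)$, that is
\[
q_{jl}(t)=\sum_{p,q=1}^{n}[M_0(\alpha)]_{jp}\,q^{(0)}_{pq}(t)\,[M_0^{-1}(\alpha)]_{ql},
\]
where $q^{(0)}_{pq}$ denote the ($\alpha$-independent) entries of $Q_0$ and, by property~3 of Proposition~\ref{prop M}, the coefficients are bounded by $e^{2a}$ in modulus. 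Since $[M_0(\alpha)]_{jp}$ and $[M_0^{-1}(\alpha)]_{ql}$ vanish unless $b_p=b_j$ and $b_q=b_l$ (property~4), while $q^{(0)}_{pq}\equiv 0$ unless $b_p\ne b_q$ (property~2), only terms with $b_j=b_p\ne b_q=b_l$ survive; in particular $b_j\ne b_l$, so the phase $\lambda g_{jl}$ truly oscillates in $t$ with nonzero frequency $\lambda(b_l-b_j)$. This reduces the whole estimate to the finitely many fixed functions $q^{(0)}_{pq}\in L[0,\infty)$.

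Given $\varepsilon>0$, I would then split each integral at a large $A_1$. On the tail $(t_1,t_2)\cap(A_1,\infty)$ the contribution is at most $e^{2a}\sum_{p,q}\|q^{(0)}_{pq}\|_{L[A_1,\infty)}$, which is $<\varepsilon$ for $A_1$ large, uniformly in $\alpha,s,x,j,l$. On the bounded piece $(t_1,t_2)\cap[0,A_1]$ I would approximate each $q^{(0)}_{pq}$ by $\rho\,h_{pq}$ with $h_{pq}$ a step function: this is legitimate because $q^{(0)}_{pq}/\rho\in L^1(\rho\,dt)$ (indeed $\int_0^{A_1}|q^{(0)}_{pq}/\rho|\,\rho\,dt=\int_0^{A_1}|q^{(0)}_{pq}|\,dt<\infty$) and step functions are dense in $L^1$ of the finite measure $\rho(t)\,dt$, so that $\|q^{(0)}_{pq}-\rho h_{pq}\|_{L[0,A_1)}<\varepsilon$. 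Using $|\exp(\lambda g_{jl})|\le 1$, the approximation error contributes at most $e^{2a}\varepsilon$, again uniformly. For the remaining step part I would exploit the exact relation $\rho(t)\exp(\lambda g_{jl})=\frac{1}{\lambda(b_l-b_j)}\,\partial_t\exp(\lambda g_{jl})$ and apply the fundamental theorem of calculus on each constant piece; since the endpoints lie in $[t_1,t_2]$, where $|\exp(\lambda g_{jl})|\le 1$, every piece is bounded by $2/(|\lambda|\delta)$ with $\delta:=\min\{|b_j-b_l|:b_j\ne b_l\}>0$, so the step part is $O(1/|\lambda|)$ with a constant depending only on $\varepsilon$. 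Collecting the three bounds gives $\theta_\alpha(\lambda)\le(2e^{2a}+1)\varepsilon$ for $|\lambda|$ large, uniformly in $\alpha$, which is the claim.

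The main obstacle is precisely this uniformity in $\alpha$, which the conjugation identity resolves by reducing everything to the fixed functions $q^{(0)}_{pq}$. A secondary subtlety is that the oscillation is carried by $p(t)$ rather than by $t$, so the Riemann--Lebesgue estimate must be carried out against the weight $\rho$; the density of step functions in $L^1(\rho\,dt)$ and the antiderivative identity $\partial_t\exp(\lambda g_{jl})=\lambda(b_l-b_j)\rho\exp(\lambda g_{jl})$ are exactly the tools that make the argument go through.
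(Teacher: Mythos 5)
Your proof is correct, and its core --- cutting off the tail of $q_{jl}$, approximating the remaining density, and integrating by parts against the explicit antiderivative $\partial_t\exp(\lambda g_{jl})=\lambda(b_l-b_j)\rho(t)\exp(\lambda g_{jl})$ --- is the same Riemann--Lebesgue argument as in the paper, which merely packages the weight $\rho$ through the change of variable $\xi=p(t)$ and approximates by a $C^{(1)}$ function instead of a step function; the derivation of \eqref{est} from \eqref{g} and \eqref{QR} is also identical. The genuine difference is your preliminary reduction $Q_\alpha(x)=M_0(\alpha)Q_0(x)M_0^{-1}(\alpha)$: the paper chooses the truncation level $T$ and the approximant $\tilde f_{jl}$ directly for the $\alpha$-dependent entries $q_{jl}$ of $Q_\alpha$, so that the resulting threshold $\lambda_*$ a priori inherits a dependence on $\alpha$ and the claimed uniformity is left implicit, whereas your conjugation identity reduces everything to the finitely many fixed functions $q^{(0)}_{pq}\in L[0,\infty)$ and makes the uniformity in $\alpha$ transparent; this is a worthwhile refinement rather than a detour. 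Two cosmetic points: the vanishing of the off-block entries of $M_0(\alpha)$ and $M_0^{-1}(\alpha)$ is property~2 of Proposition~\ref{prop M} (property~4 is the commutation with $B$), and the per-piece bound in the step part should carry the factor $\sup|h_{pq}|$ together with the number of steps --- quantities depending on $\varepsilon$ and $A_1$ but not on $\alpha,$ $s,$ $x,$ exactly as your conclusion requires.
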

%The lemma is proved analogously to Lemma~4 in~\cite{sav}. The case of a half-line will reduce to the case of a finite interval proceeded in~\cite{sav}, if for arbitrary $\varepsilon > 0,$ one takes $T > 0$ such that $\| q_{jl}\|_{L[T, \infty)} \le \varepsilon.$
%%Then, the proof involves approximation by continuously differentiable functions and integration by parts.
\begin{proof}
Inequalities~\eqref{est} easily follow from~\eqref{QR} and~\eqref{g}. Let us prove $\theta_\alpha(\lambda) \to 0$ as $\overline{\Lambda}\ni \lambda \to \infty$ uniformly on $\alpha \ge 0.$ It is equivalent to that for any $\varepsilon > 0$ and $j,l \in \overline{1, n},$ there exists $\lambda_* > 0$ such that
\begin{equation} \label{uniform con}
|\nu_{jl}(s, x, \lambda)| < \varepsilon, \quad s,x \ge \alpha, \quad |\lambda| \ge \lambda_*,
\end{equation}
and $\lambda_*$ does not depend on $\alpha.$ If $b_j = b_l,$ then $\nu_{jl}= 0,$ and~\eqref{uniform con} is obvious. Let $b_j \ne b_l;$ for definiteness, we assume $j,l<k$ (the other cases in~\eqref{rule} are proceeded analogously). Then, we can consider only $s > x$ (otherwise $\nu_{jl} = 0$), and
$$\nu_{jl}(s, x, \lambda)=\int_x^s q_{jl}(t) \exp(\lambda g_{jl}\big(s, x, t)\big) \, dt. $$
Let $T > 0$ be such that $\| q_{jl} \|_{L[T, \infty)} < \frac{\varepsilon}{2}.$ If $x \ge T,$ we immediately have
$$|\nu_{jl}(s, x, \lambda)| \le \int_x^s |q_{jl}(t)|\,dt \le \int_T^\infty |q_{jl}(t)|\,dt < \frac{\varepsilon}{2}.$$
 If $x < T,$ then
 $$|\nu_{jl}(s, x, \lambda)| \le  \Bigg|\int_{\min\{s, T\}}^s q_{jl}(t)e^{\lambda g_{jl}(s, x, t)} \, dt\Bigg| + |I_1|,\quad I_1 := \int_x^{\min\{s, T\}} q_{jl}(t) e^{\lambda g_{jl}(s, x, t)}\,dt.$$
Here, the first summand either equals $0$ (if $\min\{s, T\} = s$) or does not exceed $\frac{\varepsilon}{2}$ (if $\min\{s, T\} = T$). In any case, we have
 \begin{equation} \label{med1}
|\nu_{jl}(s, x, \lambda)| < |I_1| + \frac{\varepsilon}{2}.
\end{equation}
Changing the variable $t$ on $\xi = p(t)$ in the integral $I_1,$ we obtain
\begin{equation}
I_1 = \int_{l_1}^{l_2} f_{jl}(\xi) e^{\lambda g_{jl}(s, x, p^{-1}(\xi))}\,d\xi, \quad l_1 := p(x), \;  l_2 := p(\min\{s, T \}),
\end{equation}
where $f_{jl}(\xi) = \frac{q_{jl}(p^{-1}(\xi))}{\rho(p^{-1}(\xi))},$ while $p^{-1}$ is the inverse function to $p,$ and $[l_1, l_2] \subseteq [0,  p(T)].$ 
Note that 
$\int_0^{p(T)} |f_{jl}(\xi)|\, d\xi = \int_0^T |q_{jl}(t)| \, dt,$
which yields $f_{jl} \in L[0, p(T)].$
By this reason, there exists a continuously differentiable function $\tilde f_{jl} \in C^{(1)}[0, p(T)]$ such that 
\begin{equation*}
\| f_{jl} - \tilde f_{jl}\|_{L[0, p(T)]} \le \frac{\varepsilon}{4}.
\end{equation*}
Using this inequality and that $\big|e^{\lambda g_{jl}(s, x, p^{-1}(\xi))}\big| \le 1$ as $\xi \in [l_1, l_2],$ we arrive at
\begin{equation} \label{med2}
|I_1| \le  \int_{l_1}^{l_2}|f_{jl}(\xi) - \tilde f_{jl}(\xi)|\, d\xi +|I_2| \le \frac{\varepsilon}{4} +|I_2|, \quad 
I_2 := \int_{l_1}^{l_2} \tilde f_{jl}(\xi) e^{\lambda g_{jl}(s, x, p^{-1}(\xi))}\,d\xi.
\end{equation}
Note that $g_{jl}(s, x, p^{-1}(\xi)) = (b_l - b_j)\xi + p(x)(b_j - \omega) - p(s)(b_l - \omega),$ and 
$$\frac{\partial e^{\lambda g_{jl}(s, x, p^{-1}(\xi))}}{\partial \xi} = (b_l - b_j)\lambda e^{\lambda g_{jl}(s, x, p^{-1}(\xi))}.$$
Applying integration in parts to $I_2,$ we obtain
$$I_2 = \frac{1}{\lambda(b_l - b_j)}\left(\tilde f_{jl}(l_2) e^{\lambda g_{jl}(s, x, p^{-1}(l_2))} - \tilde f_{jl}(l_1) e^{\lambda g_{jl}(s, x, p^{-1}(l_1))} -   \int_{l_1}^{l_2}\tilde f'_{jl}(\xi) e^{\lambda g_{jl}(s, x, p^{-1}(\xi))}\,d\xi\right).$$
Denote $M_{jl}^\varepsilon = 2 \displaystyle \sup_{\xi \in [0, p(T)]} |\tilde f_{jl}(\xi)| + \| \tilde f_{jl}\|_{L[0, p(T)]}.$ The obtained equality yields $|I_2| \le \frac{M_{jl}^\varepsilon}{|\lambda||b_l - b_j|},$ and for $|\lambda| \ge \lambda_* = \frac{4 M_{jl}^\varepsilon}{\varepsilon|b_l - b_j|},$ we have the estimate $|I_2| \le \frac{\varepsilon}{4}.$ Combining it with~\eqref{med1} and~\eqref{med2}, we obtain~\eqref{uniform con}.  
 \end{proof}

Now, we prove a lemma on the solvability of a system of integral equations, applying the method of successive approximations. We
 will use this lemma in the proofs of Theorems~\ref{fss} and~\ref{neighboring}, assigning different values to $k,$ $\omega$ and $\Lambda.$
\begin{lemma}[the main lemma] \label{main lemma}
Let the parameters $k \in \overline{1, n},$  $\alpha \ge 0,$ and $\omega \in \mathbb C$ be fixed, and let a domain $\Lambda \subset \mathbb C_0$ be such that~\eqref{omega numer} holds.
Denote by ${\bf BC}_n$ the Banach space of the vector functions $\mathrm{\bf z}(x) = [z_j(x)]_{j=1}^n$ whose components are bounded and continuous on $[\alpha, \infty),$ equipped with the norm 
$$\| \mathrm{\bf z}\|_{{\bf BC}_n} = \max_{j=\overline{1, n}}\sup_{x \ge \alpha}|z_j(x)|.$$ 
We determine the operator ${\cal V}_k(\lambda)$ that acts on $\mathrm{\bf z}(x)=[z_j(x)]_{j=1}^n \in {\bf BC}_n$ as follows:  
\begin{equation} \label{operat}
{\cal V}_k(\lambda) {\bf z} := [f_j(x)]_{j=1}^n, \quad f_j(x) = \left\{\begin{array}{cc}
\displaystyle-\sum_{l=1}^n \int_x^\infty v_{jl}(t, \lambda)e^{\lambda(b_j - \omega)(p(x) - p(t))}z_l(t)\,dt, & j=\overline{1, k-1},\\
\displaystyle\sum_{l=1}^n \int_\alpha^x v_{jl}(t, \lambda)e^{\lambda(b_j - \omega)(p(x) - p(t))}z_l(t)\,dt, & j=\overline{k, n},
\end{array}
\right.
\end{equation}
where for $j,l = \overline{1, n},$ we put $v_{jl}(x, \lambda) := q_{jl}(x) + r_{jl}(x, \lambda).$ 
Then, for $\lambda \in \overline{\Lambda}\setminus\{0\},$ the operator ${\cal V}_k(\lambda)$ 
is a linear bounded operator in ${\bf BC}_n$ with the following properties.

1) There exists $\lambda_\alpha >0$ such that 
\begin{equation} \label{V^2}
\|{\cal V}^2_k(\lambda)\|_{{\bf BC}_n \to {\bf BC}_n} <\frac12, \quad \lambda \in \overline{\Lambda^\alpha},\quad \Lambda^\alpha := \{ \lambda \in \Lambda \colon |\lambda| > \lambda_\alpha\},
\end{equation}
where we denoted ${\cal V}^\eta_k(\lambda) := \underbrace{{\cal V}_k(\lambda) \ldots {\cal V}_k(\lambda)}_{\eta\text{ times}}.$ 
Moreover, $\lambda_\alpha \to 0$ as $\alpha \to \infty.$

2) For any $\lambda \in \overline{\Lambda^\alpha}$ and $\bf{w} \in {\bf BC}_n,$ the equation
\begin{equation} \label{general eq}
\mathrm{\bf z} = {\bf w} + {\cal V}_k(\lambda) \mathrm{\bf z}
\end{equation}
has a unique solution  $\mathrm{\bf z} \in {\bf BC}_n,$ which  
 satisfies the estimates
 \begin{equation}\label{weak est}
\| \mathrm{\bf z}\|_{ {\bf BC}_n} \le N_\alpha \| \mathrm{\bf w} \|_{ {\bf BC}_n},
\quad \| \mathrm{\bf z} - \mathrm{\bf w}\|_{ {\bf BC}_n} \le N_\alpha \| {\cal V}_k(\lambda){\bf w}\|_{{\bf BC}_n}.
\end{equation}
Here and below, $N_\alpha$ denotes different constants depending only on $\alpha.$

3) Let $\mathrm{\bf w}$ be a mapping of $\lambda$ to ${\bf BC}_n$ that is holomorphic in $\Lambda^\alpha$ and continuous on $\overline{\Lambda^\alpha}.$
Then, the solution of~\eqref{general eq} $\mathrm{\bf z},$ being a mapping of $\lambda$ to ${\bf BC}_n,$ possesses the same properties. 
\end{lemma}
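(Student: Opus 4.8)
# Proof Proposal for the Main Lemma

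The plan is to verify the three claims in sequence, with the heart of the argument being the contraction estimate for $\mathcal{V}_k^2(\lambda)$ in part~1). First I would establish that $\mathcal{V}_k(\lambda)$ is a bounded linear operator on $\mathbf{BC}_n$. Linearity is immediate from the integral formula~\eqref{operat}. For boundedness, I would split $v_{jl} = q_{jl} + r_{jl}$ and observe that applying $\mathcal{V}_k(\lambda)$ to $\mathbf{z}$ and reading off the $j$-th component produces integrals whose kernels, after inserting the definition of $M_\alpha$, are precisely of the form appearing in $\nu_{jl}$ and $\varkappa_{jl}$ from~\eqref{nu-mu}. The key point is that $e^{\lambda(b_j-\omega)(p(x)-p(t))}$ together with the range of integration dictated by whether $j < k$ or $j \ge k$ matches the phase $g_{jl}$ and the integration rule~\eqref{rule} once one accounts for the conjugation by $M_\alpha$. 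Thus by~\eqref{est}, each single application is controlled by $\theta_\alpha(\lambda) + \gamma_\alpha(\lambda)e^{2a} \le ae^{2a} + \gamma_\alpha(\lambda)e^{2a}$, giving a uniform bound $\|\mathcal{V}_k(\lambda)\|_{\mathbf{BC}_n \to \mathbf{BC}_n} \le n(ae^{2a} + \gamma_\alpha(\lambda)e^{2a})$.

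For part~1), the crucial observation is that a single application of $\mathcal{V}_k(\lambda)$ need \emph{not} be a contraction, since the bound $ae^{2a}$ can exceed $1$; this is exactly the non-classical difficulty flagged in the introduction. The remedy is to compute $\mathcal{V}_k^2(\lambda)$ explicitly. Writing out the composition, the $(j,m)$-entry of the kernel of $\mathcal{V}_k^2$ involves a double integral $\int\!\int v_{jl}(t)\,v_{lm}(\tau)\,e^{\lambda(\cdots)}\,dt\,d\tau$. The essential step is to perform the inner integration over the intermediate variable first: for fixed outer variable, the inner integral is again of the form $\nu_{lm}$ or $\varkappa_{lm}$ (with appropriate arguments), so by Lemma~\ref{theta lemma} it is bounded by $\theta_\alpha(\lambda) + \gamma_\alpha(\lambda)e^{2a}$ \emph{independently} of the outer variable. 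The remaining outer integral then contributes a factor $\|v_{jl}\|_{L[\alpha,\infty)} \le e^{2a}(\|A-D\|_{L[\alpha,\infty)} + \gamma_\alpha(\lambda))$, which is finite and \emph{does not} blow up. Crucially, the factor that must be small is $\theta_\alpha(\lambda)$, which tends to $0$ as $\lambda \to \infty$ uniformly in $\alpha$ by Lemma~\ref{theta lemma}; combined with $\gamma_\alpha(\lambda) \to 0$, this forces $\|\mathcal{V}_k^2(\lambda)\| < \tfrac12$ for $|\lambda|$ large. To obtain $\lambda_\alpha \to 0$ as $\alpha \to \infty$, I would invoke the refined condition~\eqref{C alpha} together with the fact that $\|A-D\|_{L[\alpha,\infty)} \to 0$, so that the outer-integral factor itself becomes small for large $\alpha$, allowing the threshold $\lambda_\alpha$ to be taken arbitrarily close to $0$.

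Part~2) follows by a standard Neumann-series argument adapted to the squared operator. Since $\|\mathcal{V}_k^2(\lambda)\| < \tfrac12$ on $\overline{\Lambda^\alpha}$, the operator $I - \mathcal{V}_k^2(\lambda)$ is invertible with $\|(I-\mathcal{V}_k^2)^{-1}\| < 2$. The equation~\eqref{general eq} is equivalent to $(I - \mathcal{V}_k)\mathbf{z} = \mathbf{w}$; multiplying by $(I+\mathcal{V}_k)$ gives $(I-\mathcal{V}_k^2)\mathbf{z} = (I+\mathcal{V}_k)\mathbf{w}$, whence $\mathbf{z} = (I-\mathcal{V}_k^2)^{-1}(I+\mathcal{V}_k)\mathbf{w}$ is the unique solution. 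The first estimate in~\eqref{weak est} then reads $\|\mathbf{z}\| \le 2(1 + \|\mathcal{V}_k\|)\|\mathbf{w}\| =: N_\alpha \|\mathbf{w}\|$, with $N_\alpha$ depending only on $\alpha$ through the uniform bound on $\|\mathcal{V}_k\|$ over $\overline{\Lambda^\alpha}$. For the second estimate, I would write $\mathbf{z} - \mathbf{w} = \mathcal{V}_k\mathbf{z}$ and use $\mathbf{z} = (I-\mathcal{V}_k^2)^{-1}(\mathbf{w} + \mathcal{V}_k\mathbf{w})$ to isolate $\mathcal{V}_k\mathbf{w}$, bounding the result by $N_\alpha\|\mathcal{V}_k\mathbf{w}\|$.

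For part~3), holomorphy of $\mathbf{z}$ in $\lambda$ propagates from that of $\mathbf{w}$ through the explicit formula $\mathbf{z} = (I-\mathcal{V}_k^2(\lambda))^{-1}(I+\mathcal{V}_k(\lambda))\mathbf{w}(\lambda)$. The plan is to show that $\lambda \mapsto \mathcal{V}_k(\lambda)$ is itself a holomorphic operator-valued mapping on $\Lambda^\alpha$ and continuous up to the boundary; this reduces to holomorphy of $\lambda \mapsto v_{jl}(\cdot,\lambda) = q_{jl} + r_{jl}(\cdot,\lambda)$ as an $L[\alpha,\infty)$-valued map and of the exponential kernels, which follows from condition~III and the construction of $R_\alpha$. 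Here I expect to use Proposition~\ref{hol} from Appendix~\ref{appendix B}, precisely as announced in the lemma's statement, to conclude that composition, inversion of $(I - \mathcal{V}_k^2)$, and the resulting Neumann series all preserve holomorphy and boundary continuity. The main obstacle throughout is part~1): making rigorous the interchange of integration order in the double integral defining $\mathcal{V}_k^2$ and identifying the inner integral as an instance of $\nu_{lm}$/$\varkappa_{lm}$ so that the uniform smallness of $\theta_\alpha(\lambda)$ can be exploited — this is the step where the cancellation inherent in the oscillatory-decaying phase $g_{jl}$, rather than mere $L^1$-smallness, must be brought to bear.
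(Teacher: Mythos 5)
Your treatment of parts 1) and 2) is essentially the paper's own argument: you bound a single application of ${\cal V}_k(\lambda)$ by $ne^{2a}(a+\gamma_\alpha(\lambda))$, then compute ${\cal V}_k^2(\lambda)$, interchange the order of integration so that the inner integral over the intermediate variable is exactly $\nu_{jl}+\varkappa_{jl}$ from~\eqref{nu-mu}, invoke Lemma~\ref{theta lemma} for the uniform smallness of $\theta_\alpha(\lambda)+e^{2a}\gamma_\alpha(\lambda)$ as $\lambda\to\infty$, and use condition~\eqref{C alpha} to shrink the outer $L^1$-factor $K_\alpha$ so that $\lambda_\alpha\to 0$; your formula $\mathrm{\bf z}=(I-{\cal V}_k^2)^{-1}(I+{\cal V}_k)\mathrm{\bf w}$ is just the paper's grouped Neumann series~\eqref{sol rep} in closed form. (Two cosmetic slips: a single application of ${\cal V}_k(\lambda)$ to a general $\mathrm{\bf z}$ does not produce integrals of the form $\nu_{jl}$ --- those arise only for the inner integral of the square or for constant vectors --- and the inner integral carries indices $(j,l)$, not $(l,m)$.)

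The one step that would fail as literally stated is in part 3): the map $\lambda\mapsto{\cal V}_k(\lambda)$ is \emph{not} continuous in the operator norm up to $\partial\Lambda^\alpha$. Indeed, at a boundary point $\lambda_0$ with $\mathrm{Re}\big(\lambda_0(b_j-\omega)\big)=0$ and an interior point $\xi$ with $\mathrm{Re}\big(\xi(b_j-\omega)\big)<0$, the kernel difference $e^{\xi(b_j-\omega)u}-e^{\lambda_0(b_j-\omega)u}$ stays of order $1$ for large $u=|p(x)-p(t)|$ no matter how close $\xi$ is to $\lambda_0$, because $u$ ranges over an unbounded set; so one cannot propagate boundary continuity (or holomorphy) through an operator-norm functional calculus applied to $(I-{\cal V}_k^2)^{-1}(I+{\cal V}_k)$. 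The paper avoids this by working with the \emph{action on vectors}: Proposition~\ref{hol} shows that ${\cal V}_k(\lambda)\mathrm{\bf w}(\lambda)$ is holomorphic in $\Lambda^\alpha$ and continuous on $\overline{\Lambda^\alpha}$ whenever $\mathrm{\bf w}$ is (the tail in $t$ being cut off using $v_{jl}(\cdot,\lambda)\in L[\alpha,\infty)$), then applies this inductively to every term of the series~\eqref{sol rep} and passes to the limit via uniform convergence on compact subsets of $\overline{\Lambda^\alpha}$ together with Theorem~\ref{lim hol maps}. Since you already invoke Proposition~\ref{hol}, the repair is to replace the operator-valued holomorphy claim by this term-by-term argument.
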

\begin{proof}
For the variable of integration $t$ in~\eqref{operat}, by~\eqref{omega numer}, we have $(p(x) - p(t))\mathrm{Re}\,\big(\lambda (b_j - \omega)\big) \le 0.$ This inequality along with~\eqref{QR} yield that ${\cal V}_k(\lambda)$ is a bounded operator in ${\bf BC}_n$ with the estimate
 \begin{equation} \label{V}
\| {\cal V}_k(\lambda)\|_{{\bf BC}_n \to {\bf BC}_n} \le ne^{2a}(\gamma_\alpha(\lambda) + a).
\end{equation}

 1) Let us consider the operator ${\cal V}^2_k(\lambda).$
For an arbitrary $\mathrm{\bf z} = [z_j(x) ]_{j=1}^n \in {\bf BC}_n,$ put $\mathrm{\bf f} =[f_j(x)]_{j=1}^n := {\cal V}_k^2 \mathrm{\bf z}.$    
By the definition, for $\lambda \in \overline{\Lambda}\setminus\{0\}$ and $j = \overline{1, n},$ we have
$$f_{j}(x) = \sum_{l,m=1}^n \pm \int_{\tau_{j}(x)}^{\sigma_{j}(x)} v_{jl}(t,\lambda) e^{\lambda(b_j - \omega)(p(x) - p(t))} \int_{\tau_{l}(t)}^{\sigma_{l}(t)} v_{lm}(s, \lambda) e^{\lambda(b_l - \omega)(p(t) - p(s))} z_m(s) ds \, dt,$$
where
$\big(\tau_{j}(x), \sigma_{j}(x)\big) :=  \left\{\begin{array}{cc}
(x, \infty), & j=\overline{1, k-1}, \\[1mm]
(\alpha, x), & j=\overline{k, n},
\end{array}\right.$ 
and for each summand, one of two signs $+$ or $-$ should be chosen instead of $\pm.$
Changing the integration order, we obtain
$$
f_j(x) = \sum_{m=1}^n \sum_{l=1}^n \pm \int_\alpha^\infty v_{lm}(s, \lambda)\big(\varkappa_{jl}(s,x,\lambda) +\nu_{jl}(s, x,\lambda)\big)z_m(s) \, ds,$$
and
$$\| \mathrm{\bf f}\|_{{\bf BC}_n} \le \| \mathrm{\bf z}\|_{{\bf BC}_n} \left(\sup_{\substack{j,l=\overline{1, n};\\s,x\ge\alpha}}\big|\varkappa_{jl}(s,x,\lambda)|+\theta_\alpha(\lambda)\right)\sum_{l,m=1}^n  \int_\alpha^\infty |v_{lm}(s,\lambda)| ds.
$$
Put $K_\alpha := \displaystyle \sup_{|\lambda|\ge \varphi(\alpha)} \gamma_\alpha(\lambda) + \| A- D\|_{L[\alpha, \infty)},$ where $\varphi(\alpha)$ is taken from~\eqref{C alpha}. Then, the latter inequality along with~\eqref{QR} and~\eqref{est} yield
\begin{equation} \label{V norm}
\|{\cal V}^2_k(\lambda)\|_{{\bf BC}_n \to {\bf BC}_n} < n^2 e^{2a}  K_\alpha \big(e^{2a}\gamma_\alpha(\lambda) + \theta_{\alpha}(\lambda)\big), \quad \lambda \in \overline{\Lambda}, \; |\lambda| \ge \varphi(\alpha).
\end{equation}
By Lemma~\ref{theta lemma} and condition~\eqref{C con}, we have
$e^{2a}\gamma_\alpha(\lambda) + \theta_{\alpha}(\lambda) \to 0$ as $\lambda \to \infty$ uniformly on $\alpha \ge 0.$
Then, there exists $\lambda_\alpha \ge \varphi(\alpha)$ such that~\eqref{V^2} holds. 

Note that condition~\eqref{C alpha} yields $K_\alpha \to 0$ as $\alpha \to \infty.$  Applying the inequalities $\gamma_{\alpha}(\lambda) \le K_\alpha$ and $\theta_{\alpha}(\lambda) \le a e^{2a}$ to~\eqref{V norm}, for a sufficiently large $\alpha,$ we arrive at
$$\|{\cal V}^2_k(\lambda)\|_{{\bf BC}_n \to {\bf BC}_n} <  n^2 e^{4a} K_\alpha \big(a + K_\alpha\big) < \frac12, \quad \lambda \in \overline{\Lambda}, \; |\lambda| \ge \varphi(\alpha).$$ 
Thus, for a sufficiently large $\alpha,$ we can put $\lambda_\alpha := \varphi(\alpha) \to 0$ as $\alpha \to \infty.$
%%%%%%%%%%%%%%%%%%%%%%%%%%%

2) Now, we solve equation~\eqref{general eq} by the method of successive approximations. Consider the series
\begin{equation} \label{sol rep}
\mathrm{\bf z} = \mathrm{\bf w} +\sum_{\eta=0}^\infty {\cal V}^{2\eta+1}_k(\lambda) \mathrm{\bf w} + \sum_{\eta=0}^\infty {\cal V}^{2\eta+2}_k(\lambda) \mathrm{\bf w}, \quad  \lambda \in \overline{\Lambda^\alpha}.
\end{equation}
By~\eqref{V^2}, this series converges in ${\bf BC}_n$ and gives us the unique solution of~\eqref{general eq}. Using also~\eqref{V}, it is easy to obtain estimates~\eqref{weak est}. 

3)  One can show that under our assumptions on $\mathrm{\bf w},$ the mapping ${\cal V}_k(\lambda)\mathrm{\bf w}$ is continuous on $\overline{\Lambda^\alpha}$ and holomorphic in $\Lambda^\alpha$ (for the proof of the holomorphy property, see Proposition~\ref{hol} in Appendix~\ref{appendix B}). By induction, each summand in~\eqref{sol rep} is continuous on $\overline{\Lambda^\alpha}$ and holomorphic in $\Lambda^\alpha.$  The values $\gamma_\alpha(\lambda)$ and $\| \mathrm{\bf w}(\cdot, \lambda)\|_{{\bf BC}_n}$ are continuous functions of $\lambda.$ 
Then, by~\eqref{V^2} and~\eqref{V}, the series in~\eqref{sol rep} converges uniformly on $\lambda$ from every compact subset of $\overline{\Lambda^\alpha}.$ The required statement follows from the fact that the uniform limit of continuous mappings is also continuous  and from Theorem~\ref{lim hol maps} in Appendix~\ref{appendix B}.
\end{proof}

In what follows, we need the following auxiliary statement.
\begin{theorem} \label{theta theorem}
Assume additionally that the elements of the matrix $A - D$ belong to $L_2[\alpha, \infty)$ and $\displaystyle\mathop{\mathrm{essinf}}_{x \ge \alpha}\, \rho(x) > 0.$
Then, for an arbitrary half-line $\Sigma \subset \overline{\Lambda},$ we have $\theta_{\alpha} \in L_{2}(\Sigma).$  
\end{theorem}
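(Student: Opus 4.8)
The plan is to fix a pair $(j,l)$ with $b_j\neq b_l$ (for $b_j=b_l$ one has $\nu_{jl}\equiv0$) together with one of the four regimes of~\eqref{rule}, and to prove $\sup_{s,x\ge\alpha}|\nu_{jl}(s,x,\cdot)|\in L_2(\Sigma)$; since $\theta_\alpha^2\le\sum_{j,l}\sup_{s,x}|\nu_{jl}|^2$ is a finite sum, this suffices. First I would pass to the variable $\xi=p(t)$, exactly as in the proof of Lemma~\ref{theta lemma}, writing $\nu_{jl}=\int f_{jl}(\xi)\,e^{\lambda g_{jl}(s,x,p^{-1}(\xi))}\,d\xi$ with $f_{jl}(\xi)=q_{jl}(p^{-1}(\xi))/\rho(p^{-1}(\xi))$. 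The two added hypotheses enter precisely here: since $\|q_{jl}\|_{L_2[\alpha,\infty)}<\infty$ (because $Q_\alpha=M_\alpha^{-1}(A-D)M_\alpha$ with $M_\alpha^{\pm1}$ bounded by~\eqref{M alpha} and $A-D\in L_2$) and $\mathrm{essinf}_{x\ge\alpha}\rho>0$, the change of variables gives $\int|f_{jl}(\xi)|^2\,d\xi=\int|q_{jl}(t)|^2\rho(t)^{-1}\,dt<\infty$, i.e.\ $f_{jl}\in L_2$. Recalling $g_{jl}(s,x,p^{-1}(\xi))=(b_l-b_j)\xi+(b_j-\omega)p(x)-(b_l-\omega)p(s)$, I would factor out a unimodular exponential (its modulus is $\le1$ by~\eqref{g}) and reduce $|\nu_{jl}|$ to the modulus of a one-sided truncated exponential integral of $f_{jl}$, the interval of integration and its variable endpoint being dictated by~\eqref{rule}.

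Next I would parametrize the half-line as $\Sigma=\{\lambda_\ast+re^{i\varphi}:r\ge0\}$, so that $|d\lambda|=dr$, and set $\mu:=\lambda(b_l-b_j)$, which runs along a ray; by the sign bookkeeping behind~\eqref{g} one may always choose the factorization so that $\mathrm{Re}\,\mu\ge0$ and the kernel $e^{-\mu(\cdot)}$ does not grow. The decisive dichotomy is whether $\mathrm{Re}\,\mu>0$ along $\Sigma$ (the off-critical case) or $\mathrm{Re}\,\mu\equiv0$ (the critical case, occurring when $\lambda$ is orthogonal to $b_l-b_j$). In the representative off-critical tail regime, writing $\psi_r(\xi)=\int_\xi^\infty f_{jl}(u)e^{-\mu(u-\xi)}\,du$, one has $|\psi_r(\xi)|\le\int_0^\infty|f_{jl}(\xi+v)|e^{-r\sigma_0 v}\,dv$ with $\sigma_0>0$. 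I would then control $\int_\Sigma\sup_\xi|\psi_r(\xi)|^2\,dr$ by a semigroup-type maximal estimate: tracking the trade-off between the averaging width $1/(r\sigma_0)$ and the size of $f_{jl}$ (equivalently, via the Hardy--Littlewood maximal function of $f_{jl}$) yields $\int_\Sigma\sup_\xi|\psi_r|^2\,dr\le C\|f_{jl}\|_{L_2}^2$. This part is elementary and uniform in the remaining parameters.

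The main obstacle is the critical case $\mathrm{Re}\,\mu\equiv0$. Writing $\mu=i\beta$ with $\beta=r\,\mathrm{Im}\big(e^{i\varphi}(b_l-b_j)\big)$ running over a half-line, the quantity to be estimated becomes, after discarding the unimodular prefactor, a one-sided \emph{maximal} Fourier integral
$$
\sup_{c\ge p(\alpha)}\Big|\int_{c}^{\infty} f_{jl}(\xi)\,e^{i\beta\xi}\,d\xi\Big|
\quad\text{or}\quad
\sup_{c\ge p(\alpha)}\Big|\int_{p(\alpha)}^{c} f_{jl}(\xi)\,e^{i\beta\xi}\,d\xi\Big|,
$$
which must be shown to lie in $L_2(d\beta)$ for $f_{jl}\in L_2$. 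This is exactly a Carleson-type maximal operator, so the non-maximal Plancherel bound (uniform in the endpoint) is by itself insufficient: the supremum is genuinely responsible for the difficulty. I would attack it in two steps. Whenever at least one of $\mathrm{Re}\big(\lambda(b_j-\omega)\big)$, $\mathrm{Re}\big(\lambda(b_l-\omega)\big)$ is strictly positive on $\Sigma$, I would \emph{retain} the corresponding prefactor $e^{-\rho L}$ ($\rho>0$, $L$ the length of the truncation) rather than bounding it by $1$; this exponentially truncates the oscillation length, after which the crude bound $\big|\int_0^L\big|\le\sqrt{L}\,\|f_{jl}\|_{L_2}$ together with the same width/size trade-off as above gives an $L_2(dr)$ estimate with no appeal to Carleson. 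The residual rays (if any) are those on which $\mathrm{Re}(\lambda b_j)=\mathrm{Re}(\lambda b_l)=\mathrm{Re}(\lambda\omega)$ hold simultaneously, where no prefactor decay is available; on these I would invoke the $L_2$-boundedness of the one-sided Carleson--Hunt maximal Fourier operator, using $f_{jl}\in L_2$, to conclude.

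Finally I would assemble the pieces: summing the finitely many contributions over $(j,l)$ with $b_j\ne b_l$ and over the four regimes of~\eqref{rule}, each just shown to lie in $L_2(\Sigma)$, gives $\theta_\alpha\in L_2(\Sigma)$. I expect the genuine difficulty to be concentrated entirely in the critical rays of the previous paragraph; the change of variables, the reduction to $f_{jl}\in L_2$, and the off-critical maximal estimate are routine, and the sole role of the two extra hypotheses is to guarantee $f_{jl}\in L_2[0,\infty)$, which is precisely the input consumed by both the semigroup-type and the Carleson-type maximal estimates.
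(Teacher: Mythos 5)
Your reduction matches the paper's: pass to $\xi=p(t)$, use the two extra hypotheses only to get $f_{jl}\in L\cap L_2[0,\infty)$, bound $\sup_{s,x}|\nu_{jl}|$ by a one-sided maximal truncated exponential integral of $f_{jl}$ evaluated at $\tilde\lambda=\pm i\lambda(b_j-b_l)\in\overline{\mathbb C_+}$, and on real rays invoke the Carleson--Hunt maximal operator (this is exactly the paper's Theorem~\ref{A}). The gap is in your treatment of the non-critical rays, which you declare ``elementary'': the estimates you sketch do not close. The Hardy--Littlewood route gives $\sup_\xi|\psi_r(\xi)|\le (r\sigma_0)^{-1}\sup_\xi Mf_{jl}(\xi)$, and $\sup_\xi Mf_{jl}(\xi)$ can be $+\infty$ for $f_{jl}\in L_2$ (the sup here is over the \emph{space} variable while you integrate over the \emph{scale} variable, which is the reverse of the usual maximal-function setup). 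The width/size trade-off gives only $\sup_\xi|\psi_r(\xi)|\le\min\bigl(\|f_{jl}\|_{L_1},\,(2r\sigma_0)^{-1/2}\|f_{jl}\|_{L_2}\bigr)$, whose square has a logarithmically divergent integral in $r$; the same logarithmic divergence kills the bound $e^{-\rho L}\sqrt{L}\le C\rho^{-1/2}$ with $\rho\asymp r$ in your intermediate case. Finally, a half-line $\Sigma$ can map to a \emph{horizontal} ray at positive height in $\overline{\mathbb C_+}$ (direction orthogonal to $b_l-b_j$ but with $\mathrm{Re}\,(\lambda_*(b_l-b_j))>0$); there the damping rate is a constant independent of $r$, your bound on $\sup_\xi|\psi_r|$ is a positive constant, and its square is not integrable over the ray. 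So no ray with exponential damping is actually disposed of by the argument as written.

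The paper closes this case by a genuinely different mechanism: $\Psi(\lambda)=\sup_{s,x}|\psi(s,x,\lambda)|$ is subharmonic in $\mathbb C_+$ (a supremum of moduli of holomorphic functions), its boundary trace lies in $L_2(\mathbb R)$ by Carleson--Hunt (Theorem~\ref{A}), the maximum principle majorizes $\Psi$ by the Poisson integral of that trace, and the Carleson embedding theorem (Theorem~\ref{C}, via Theorem~\ref{B}) yields $L_2$ of the majorant against arc length on \emph{any} ray in $\overline{\mathbb C_+}$, which is a Carleson measure. In other words, the Carleson--Hunt input is consumed on every ray, not only the critical ones, and the interior estimate is obtained by harmonic majorization from the boundary rather than by a direct kernel bound. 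To repair your proof you would need either this majorization argument or some other genuinely nontrivial substitute for the off-critical rays; the dichotomy ``Carleson only on critical rays, elementary elsewhere'' is not tenable.
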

Its proof is based on the technique offered in~\cite[Sect.~7]{sav sad}, wherein a similar statement was proved in the case of a finite interval (see also~\cite{sav-calderon}). However, the case of a half-line studied here brings several differences. For this reason, as well as for the sake of completeness, we provide the proof in Appendix~\ref{appendix A}.

%%%%%%%%%%%%%%%%%%%%%%%%%%%%%%%%%%%%%%%
\section{Fundamental systems of solutions (FSS)} \label{fss sec}
Let $\mathrm{\bf y}_1, \mathrm{\bf y}_2, \ldots, \mathrm{\bf y}_m$ be a set of $m \ge 1$ solutions of system~\eqref{sys} with some fixed $\lambda \in \mathbb C_0.$
It is convenient to represent this set as the $n \times m$ matrix $Y(x, \lambda)$ whose $k$-th column coincides with $\mathrm{\bf y}_k,$ $k=\overline{1, m}.$ We call the matrix $Y(x, \lambda)$ system of solutions of~\eqref{sys}.

Let a system of solutions $Y(x, \lambda)$ consist of $m=n$ columns. If its columns are linearly independent on $x \in [0, \infty),$
$Y(x, \lambda)$ is called FSS of~\eqref{sys}.
It is known that criterion of FSS is $\det Y(x, \lambda) \ne 0$ for some $x \ge 0.$ 

In this section, we obtain fundamental systems of solutions $Y_\alpha(x, \lambda)$ depending on the parameter $\alpha \ge 0.$ They are constructed individually for $\lambda$ belonging to special sectors. Consider the set of the lines $\mathrm{Re}\, (\lambda b_j) = \mathrm{Re}\, (\lambda b_l),$ where $j,l=\overline{1,n}$ and $b_j \ne b_l$ (see~\cite{sav,beals}). These lines split the plane $\lambda \in \mathbb C$ into sectors of the form $\Gamma_\kappa = \{ \lambda \in \mathbb C_0 \colon \beta_{\kappa-1} < \arg \lambda < \beta_{\kappa}\}.$ For each such sector $\Gamma_\kappa,$ one can renumber $\{ b_j\}_{j=1}^n$ so that~\eqref{numer1} holds. Obviously, there is a finite number $J$ of the sectors $\Gamma_\kappa,$ $\kappa=\overline{1, J},$ they are non-overlapping, and $\mathop{\cup}_{\kappa = 1}^J \overline{\Gamma_\kappa} = \mathbb C.$ 

Let us consider $\lambda \in \overline{\Gamma_\kappa}$
with fixed $\kappa \in \overline{1, J}$ and proceed to a numeration satisfying~\eqref{numer1}.  
Note that an arbitrary numeration of $\{ b_j \}_{j=1}^n$ can be achieved by permutations of rows and columns of the objects in~\eqref{sys}. In fact, the mentioned permutations do not influence the formulations of Theorems~\ref{fss} and~\ref{complement}.

\begin{theorem} \label{fss}
For any $\alpha \ge 0,$ there exists $\lambda_{\alpha}>0$ such that for
$\lambda \in \overline{\Gamma^{\alpha}_\kappa},$ ${\Gamma}^{\alpha}_\kappa := \{ \lambda \in \Gamma_\kappa \colon |\lambda| > \lambda_\alpha\},$ there exists a 
 FSS of~\eqref{sys} $Y_\alpha(x, \lambda) = [y_{jk}(x, \lambda)]_{j,k=1}^n$ with the following properties.

\begin{enumerate}
\item  
For $j,k=\overline{1,n}$ and $\lambda \in \overline{\Gamma^\alpha_\kappa},$ uniformly on $x \ge \alpha,$ we have
\begin{equation} \label{asymptotics}
y_{jk}(x, \lambda) = e^{\lambda b_k (p(x) - p(\alpha))}\big(m_{jk}(x) +s_{jk}(x, \lambda)\big) , \quad s_{jk}(x, \lambda) = o(1),  \quad  \lambda \to \infty,
\end{equation}
where $m_{jk}(x)$ are the elements of the matrix $M_\alpha$ determined by~\eqref{M init value}.

\item  For a fixed $x  \ge 0,$ the functions $y_{jk}(x, \lambda),$ $j,k=\overline{1, n,}$ are continuous on $\overline{\Gamma_\kappa^\alpha}$ and  analytic in $ \Gamma_\kappa^{\alpha}.$ 

\item $
y_{jk}(\alpha, \lambda) = \delta_{jk}$ for $k = \overline{1, n}$ and  $j = \overline{k, n},$ where $\delta_{jk}$ is the Kronecker delta. 

\end{enumerate}
Moreover,  $\lambda_\alpha \to 0$ as $\alpha \to \infty.$
\end{theorem}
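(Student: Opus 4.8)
The plan is to treat system~\eqref{sys} column by column, reducing the construction of the $k$-th column of $Y_\alpha$ to an integral equation of the form~\eqref{general eq} and then invoking the main lemma. Fix $k\in\overline{1,n}$ and adopt the numeration~\eqref{numer1} valid on $\overline{\Gamma_\kappa}$. I would apply Lemma~\ref{main lemma} with $\omega=b_k$ and $\Lambda=\Gamma_\kappa$; condition~\eqref{omega numer} then holds because~\eqref{numer1} gives $\mathrm{Re}(\lambda b_j)\ge\mathrm{Re}(\lambda b_k)\ge\mathrm{Re}(\lambda b_l)$ for $j=\overline{1,k-1}$, $l=\overline{k,n}$. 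The substitution $\mathbf{y}=M_\alpha(x)\,e^{\lambda b_k(p(x)-p(\alpha))}\mathbf{z}$, using $M_\alpha'=DM_\alpha$, the commutation of $M_\alpha$ with $B$ (Proposition~\ref{prop M}, item~4), and the definitions of $Q_\alpha$ and $R_\alpha$, turns~\eqref{sys} into the diagonal-free system $z_j'=\lambda\rho(b_j-b_k)z_j+\sum_{l=1}^n v_{jl}z_l$ with $v_{jl}=q_{jl}+r_{jl}$ as in~\eqref{operat}. Solving each scalar equation by variation of constants with integrating factor $e^{\lambda(b_j-b_k)(p(x)-p(t))}$, the sign of $\mathrm{Re}(\lambda(b_j-b_k))$ dictates the base point: by~\eqref{numer1} it is nonnegative for $j<k$, so the unbounded homogeneous mode must be dropped and one integrates from $\infty$; it is nonpositive for $j\ge k$, so one integrates from $\alpha$ and keeps the decaying mode. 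This reproduces exactly the operator~\eqref{operat}, so the $k$-th column satisfies $\mathbf{z}=\mathbf{w}+\mathcal{V}_k(\lambda)\mathbf{z}$ with the constant forcing $\mathbf{w}=\mathbf{e}_k$, the $k$-th column of $I$: the vanishing of $w_j$ for $j<k$ removes the growing modes, while $w_{jk}=\delta_{jk}$ for $j\ge k$ encodes the data $z_{jk}(\alpha)=\delta_{jk}$.

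I would then apply Lemma~\ref{main lemma} (with this $k$, $\omega$, $\Lambda$ and the holomorphic, in fact constant, $\mathbf{w}=\mathbf{e}_k$) to obtain a unique $\mathbf{z}=\mathbf{z}_k(\cdot,\lambda)\in\mathbf{BC}_n$ for $\lambda\in\overline{\Gamma_\kappa^\alpha}$, holomorphic in $\Gamma_\kappa^\alpha$, continuous on the closure, with $\lambda_\alpha\to0$ as $\alpha\to\infty$. Setting $y_{jk}(x,\lambda)=e^{\lambda b_k(p(x)-p(\alpha))}\sum_{l=1}^n m_{jl}(x)z_{lk}(x,\lambda)$ recovers the columns. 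Property~2 is then immediate, since the scalar prefactor is entire in $\lambda$ and $m_{jl}$ is $\lambda$-independent; property~3 follows from $y_{jk}(\alpha,\lambda)=z_{jk}(\alpha,\lambda)=\delta_{jk}$ for $j\ge k$. Comparing with~\eqref{asymptotics}, the residual is $s_{jk}=\sum_{l=1}^n m_{jl}(z_{lk}-\delta_{lk})$, and the second bound in~\eqref{weak est} gives $\|\mathbf{z}_k-\mathbf{e}_k\|\le N_\alpha\|\mathcal{V}_k(\lambda)\mathbf{e}_k\|$.

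The crux is to show $\|\mathcal{V}_k(\lambda)\mathbf{e}_k\|\to0$ as $\lambda\to\infty$, and here the nontrivial input is Lemma~\ref{theta lemma}. Since $\mathbf{e}_k$ isolates the $l=k$ term, each component of $\mathcal{V}_k(\lambda)\mathbf{e}_k$ equals $\pm\int v_{jk}(t,\lambda)e^{\lambda(b_j-b_k)(p(x)-p(t))}\,dt$; splitting $v_{jk}=q_{jk}+r_{jk}$, the choice $\omega=b_k$ makes $b_k-\omega=0$, so $g_{jk}(s,x,t)=(b_j-b_k)(p(x)-p(t))$ and, for a suitable $s$, the two summands are precisely $\pm\nu_{jk}(s,x,\lambda)$ and $\pm\varkappa_{jk}(s,x,\lambda)$. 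By~\eqref{est} and Lemma~\ref{theta lemma} they are bounded by $\theta_\alpha(\lambda)+e^{2a}\gamma_\alpha(\lambda)\to0$, whence $\|\mathbf{z}_k-\mathbf{e}_k\|\to0$ and, using $|m_{jl}|\le e^a$ from~\eqref{M alpha}, $s_{jk}=o(1)$ uniformly in $x\ge\alpha$, which is~\eqref{asymptotics}. I expect this to be the main obstacle: it is exactly the step where the mere summability of $q_{jk}$ does not suffice, and one must exploit the oscillatory cancellation recorded in $\theta_\alpha(\lambda)$ rather than the raw $L[\alpha,\infty)$-norm.

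It then remains to confirm that $Y_\alpha$ is a genuine FSS and to close the routine gaps. Because $y_{jk}(\alpha,\lambda)=\delta_{jk}$ for $j\ge k$, the matrix $Y_\alpha(\alpha,\lambda)$ is upper-triangular with unit diagonal, so $\det Y_\alpha(\alpha,\lambda)=1\ne0$, and the stated criterion makes $Y_\alpha$ an FSS. The final $\lambda_\alpha\to0$ as $\alpha\to\infty$ is inherited directly from Lemma~\ref{main lemma}. The only verifications left are the equivalence of the integral equation with~\eqref{sys} in the prescribed solution class, namely the absolute continuity of the $y_{jk}$ on each $[0,T]$ and the convergence of the improper integrals over $[x,\infty)$; both follow from the $L[\alpha,\infty)$-bounds~\eqref{QR} already at hand.
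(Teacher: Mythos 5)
Your proposal is correct and follows essentially the same route as the paper: the substitution $\mathbf{y}=M_\alpha e^{\lambda b_k(p(x)-p(\alpha))}\mathbf{z}_k$ leading to $\mathbf{z}_k=\mathbf{e}_k+{\cal V}_k(\lambda)\mathbf{z}_k$ with $\omega=b_k$, $\Lambda=\Gamma_\kappa$, the estimate of ${\cal V}_k(\lambda)\mathbf{e}_k$ via the split into $\nu_{jk}$ and $\varkappa_{jk}$ controlled by Lemma~\ref{theta lemma}, and the extension to $[0,\alpha]$ by Proposition~\ref{finite interval prop} all match the paper's argument. Your explicit remark that $Y_\alpha(\alpha,\lambda)$ is unit upper-triangular, so $\det Y_\alpha(\alpha,\lambda)=1$, merely spells out what the paper leaves implicit in deducing fundamentality from property~3.
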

%%%%%%%%%%%%%%%%%%%%%%%%%%%%%%%
In the proof, we need the following proposition, which is a consequence of Proposition~2 and Corollary~1 in~\cite{sav}.

\begin{prop} \label{finite interval prop}
For $\alpha > 0$ and $\lambda \in \mathbb C_0,$ consider the initial-value problem for system~\eqref{sys} on the finite segment $x \in [0, \alpha]$ under the initial condition $\mathrm{\bf y}(\alpha, \lambda) = [w_j(\lambda)]_{j=1}^n.$ Then, 
the initial-value problem has a unique solution $\mathrm{\bf y}(x, \lambda) = [y_j(x, \lambda)]_{j=1}^n$ with the components $y_{j}(\cdot, \lambda) \in AC[0, \alpha].$ Moreover, if the functions $w_l(\lambda),$ $l = \overline{1, n},$ are analytic in (continuous on) $\Lambda \subseteq \mathbb C_0,$ then for a fixed $x \in [0, \alpha],$  the functions $y_j(x, \lambda),$ $j = \overline{1, n},$ are also analytic in (continuous on) $\Lambda.$
\end{prop}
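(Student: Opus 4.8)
The plan is to recast the initial-value problem for system~\eqref{sys} as a Volterra integral equation and solve it by successive approximations, then propagate the regularity in $\lambda$ from the initial data through the iterates. Write $P(x, \lambda) := \lambda F(x) + A(x) + C(x, \lambda)$. By conditions I--III, for each fixed $\lambda \in \mathbb C_0$ the entries of $P(\cdot, \lambda)$ lie in $L[0, \alpha]$, so the problem is equivalent to
$$\mathrm{\bf y}(x, \lambda) = \mathrm{\bf w}(\lambda) + \int_\alpha^x P(t, \lambda) \mathrm{\bf y}(t, \lambda)\, dt, \quad x \in [0, \alpha].$$
First I would settle existence and uniqueness. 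Denoting by $\mathcal T$ the integral operator on the right (without the free term), acting in the space of continuous vector functions on $[0, \alpha]$, a standard iterated-kernel estimate gives $\| \mathcal T^m\| \le \frac{1}{m!}\big(\int_0^\alpha \| P(t, \lambda)\|\, dt\big)^m$, so some power of $\mathcal T$ is a contraction and the equation has a unique continuous solution. Since that solution equals $\mathrm{\bf w}(\lambda)$ plus the Lebesgue integral of the $L[0,\alpha]$-function $P(\cdot, \lambda)\mathrm{\bf y}(\cdot, \lambda)$, its components belong to $AC[0, \alpha]$, as required.

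For the dependence on $\lambda$, I would work with the successive approximations $\mathrm{\bf y}_0 := \mathrm{\bf w}(\lambda)$ and $\mathrm{\bf y}_{m+1}(x, \lambda) := \mathrm{\bf w}(\lambda) + \int_\alpha^x P(t, \lambda)\mathrm{\bf y}_m(t, \lambda)\, dt$, which converge to $\mathrm{\bf y}$ uniformly in $(x, \lambda)$ on $[0, \alpha] \times K$ for every compact $K \subset \Lambda$ (the factorial bound is uniform where $|\lambda|$ stays bounded). Arguing by induction on $m$, if $\mathrm{\bf w}(\lambda)$ is continuous on (analytic in) $\Lambda$, then so is each $\mathrm{\bf y}_m(x, \lambda)$ for fixed $x$: the contribution of $\lambda F(t) = \lambda \rho(t) B$ is entire in its $\lambda$-factor and that of $A(t)$ is $\lambda$-independent, while the contribution of $C(t, \lambda)$ is handled exactly as in part~3 of Lemma~\ref{main lemma}, invoking Proposition~\ref{hol} of Appendix~\ref{appendix B}. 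Passing to the limit, continuity survives because a uniform limit of continuous mappings is continuous, and analyticity survives by Theorem~\ref{lim hol maps} of Appendix~\ref{appendix B}.

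The main obstacle is the analyticity claim, and specifically the step asserting that $\lambda \mapsto \int_\alpha^x C(t, \lambda)\mathrm{\bf y}_m(t, \lambda)\, dt$ is holomorphic. This is delicate because condition~III furnishes $C$ only as a holomorphic mapping of $\lambda$ into the Banach space $L[0, \infty)$, rather than as a family of pointwise-holomorphic integrands, so one cannot naively differentiate under the integral sign; the resolution is the same Banach-space holomorphy machinery (Proposition~\ref{hol} together with Theorem~\ref{lim hol maps}) that underlies the main lemma. Everything else — the Carath\'eodory existence theory on the finite segment and the uniform convergence of the Neumann series on compacts — is routine.
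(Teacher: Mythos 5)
Your argument is correct, but it is worth noting that the paper does not prove this proposition at all: it is stated as a direct consequence of Proposition~2 and Corollary~1 of the cited work of Savchuk and Shkalikov, so your Picard-iteration proof is a self-contained substitute rather than a parallel to an argument in the text. The substance is sound: the Volterra reformulation, the factorial bound $\|\mathcal T^m\| \le \frac{1}{m!}\big(\int_0^\alpha \|P(t,\lambda)\|\,dt\big)^m$ making a power of $\mathcal T$ a contraction, the absolute continuity of the solution as an indefinite Lebesgue integral, and the propagation of continuity and analyticity through the iterates followed by uniform convergence on compacts (where $|\lambda|$ and $\|C(\cdot,\lambda)\|_{L[0,\alpha]}$ are bounded) are all standard and correct. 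The one place where you are slightly loose is the appeal to Proposition~\ref{hol}: as stated, that proposition concerns the specific operator ${\cal V}_k(\lambda)$ on $[\alpha,\infty)$ with exponential kernels $e^{\lambda(b_j-\omega)(p(x)-p(t))}$ and integrals over infinite ranges, so it does not literally apply to your Volterra operator on $[0,\alpha]$. What you actually need is the simpler fact that if $G\colon\Lambda\to L[0,\alpha]$ and $H\colon\Lambda\to C[0,\alpha]$ are holomorphic mappings, then $\lambda\mapsto\int_\alpha^x G(t,\lambda)H(t,\lambda)\,dt$ is holomorphic --- a bounded-bilinear-map composition argument whose proof is a strict simplification of that of Proposition~\ref{hol} (no analogue of Lemma~\ref{delta lemma} or of the estimate on $ze^{-\varepsilon z}$ is required, since the interval is finite and there are no exponential weights). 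With that substitution made explicit, the proof is complete; the trade-off against the paper's citation is that your route is longer but entirely elementary and keeps the paper self-contained.
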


\begin{proof}[Proof of Theorem~\ref{fss}] First, we construct the needed solutions of~\eqref{sys}  for $x \ge \alpha,$ then we extend them on $[0, \alpha]$ applying Proposition~\ref{finite interval prop}. 
For $k = \overline{1, n}$ and $x \ge \alpha,$ let us represent the $k$-th column of the matrix function $Y_\alpha(x, \lambda)$ as
\begin{equation}\label{y_k}
[y_{jk}(x, \lambda)]_{j=1}^n = M_\alpha(x) \mathrm{\bf z}_k(x, \lambda) e^{\lambda b_k (p(x) - p(\alpha))}, \quad \mathrm{\bf z}_k(x, \lambda) = [z_{jk}(x, \lambda)]_{j=\overline{1,n}}.
\end{equation}
Substituting $\mathrm{\bf y} =[y_{jk}(x, \lambda)]_{j=1}^n$ into~\eqref{sys}, we arrive at the equation  
 \begin{equation} \label{z sys}
\mathrm{\bf z}'_k(x, \lambda) = \lambda r(x) (B \mathrm{\bf z}_k(x, \lambda) - b_k \mathrm{\bf z}_k(x, \lambda)) + (Q_\alpha(x) + R_\alpha(x, \lambda)) \mathrm{\bf z}_k(x, \lambda), \quad x \ge \alpha.
\end{equation}
Now, we construct a solution of~\eqref{z sys} $\mathrm{\bf z}_k \in {\bf BC}_n$ satisfying the conditions
\begin{equation}\label{init}
\left\{\begin{array}{cc}
\displaystyle\lim_{x \to \infty}z_{jk}(x, \lambda) = 0, & j=\overline{1, k - 1}, \\[2mm]
z_{jk}(\alpha, \lambda) = \delta_{jk}, & j =\overline{ k, n}.
\end{array}\right. 
\end{equation}
Integrating~\eqref{z sys} under conditions~\eqref{init}, for each fixed $k\in\overline{1, n},$  we obtain the equation
\begin{equation} \label{int sys}
\mathrm{\bf z}_k = \mathrm{\bf e}_k + {\cal V}_k(\lambda) \mathrm{\bf z}_k, \quad \mathrm{\bf e}_k := [\delta_{jk}]_{j=1}^n,
\end{equation}
where the operator ${\cal V}_k(\lambda)$ is determined in Lemma~\ref{main lemma} with $\omega := b_k$ and $\Lambda := \Gamma_\kappa.$ 
We can apply Lemma~\ref{main lemma}, since~\eqref{numer1} yields~\eqref{omega numer}. Then, for $\lambda \in \overline{\Gamma_\kappa^\alpha},$ equation~\eqref{int sys} has a unique solution $\mathrm{\bf z}_k \in {\bf BC}_n,$ and 
\begin{equation} \label{z_k estimate}
\| \mathrm{\bf z}_k - \mathrm{\bf e}_k \|_{{\bf BC}_n} \le N_\alpha \|\mathrm{\bf z}^1_k\|_{{\bf BC}_n}, \quad \mathrm{\bf z}^1_k := {\cal V}_k(\lambda) \mathrm{\bf e}_k.
\end{equation}
Introduce operators ${\cal R}_k(\lambda)$ and ${\cal Q}_k(\lambda)$ analogously to ${\cal V}_k(\lambda)$ in~\eqref{operat}, replacing each component $v_{jl}(t, \lambda)$ by $r_{jl}(t, \lambda)$ and $q_{jl}(t),$ respectively.  
By the same way as~\eqref{V}, we obtain that 
$\| {\cal R}_k(\lambda)\|_{{\bf BC}_n \to {\bf BC}_n} \le n e^{2a} \gamma_\alpha(\lambda).$
We also have
${\cal Q}_k \mathrm{\bf e}_{k} = [\nu_{jk}(\alpha, x,\lambda)]_{j=1}^n,$ and, consequently,
$\|  {\cal Q}_k \mathrm{\bf e}_{k}\|_{{\bf BC}_n} \le \theta_{\alpha}(\lambda).$
Then, since ${\cal V}_k(\lambda) = {\cal R}_k(\lambda)+{\cal Q}_k(\lambda),$ we obtain
\begin{equation*} 
\| \mathrm{\bf z}^1_{k} \|_{{\bf BC}_n} \le \| {\cal R}_k(\lambda)\mathrm{\bf e}_k\|_{{\bf BC}_n} + \|{\cal Q}_k \mathrm{\bf e}_{k}\|_{{\bf BC}_n} \le n e^{2a} \gamma_\alpha(\lambda) + \theta_{\alpha}(\lambda).
\end{equation*}
Using~\eqref{z_k estimate} and the latter inequality, we arrive at
\begin{equation} \label{z_k^0}
\| \mathrm{\bf z}_k - \mathrm{\bf e}_k\|_{{\bf BC}_n} \le N_\alpha ( \gamma_\alpha(\lambda) +\theta_{\alpha}(\lambda)), \quad \lambda \in \overline{\Gamma_\kappa^\alpha}.
\end{equation}

Constructing the matrix $Y_\alpha(x, \lambda) = [y_{jk}(x, \lambda)]_{j,k=1}^n$ with the components get by~\eqref{y_k}, we obtain a system of solutions of~\eqref{sys} for $x \ge \alpha,$ possessing the needed properties 2 and 3. Property~3 yields that $Y_\alpha(x, \lambda)$ is a FSS.
 From~\eqref{M alpha},~\eqref{y_k} and~\eqref{z_k^0} it follows that formulae~\eqref{asymptotics} hold with 
\begin{equation}
|s_{jk}(x, \lambda)| \le N_\alpha (\gamma_\alpha(\lambda) +\theta_{\alpha}(\lambda)), \quad j,k=\overline{1, n}, \; x \ge \alpha, 
\label{s est}
\end{equation}
wherein, by Lemma~\ref{theta lemma} and condition~\eqref{C con}, the right side tends to $0$ as $\lambda \to \infty.$
Thus, property~1 is also established.

It remains to extend each column vector $[y_{jk}(x, \lambda)]_{j=1}^n$ on $x \in [0, \alpha]$ by the solution of system~\eqref{sys} under the initial condition $\mathrm{\bf y}(\alpha, \lambda)=[y_{jk}(\alpha, \lambda)]_{j=1}^n.$ By virtue of Proposition~\ref{finite interval prop}, the needed solution exists and keeps property~2.
\end{proof}

Further, we give a complement to Theorem~\ref{fss} under additional assumptions on the matrices $A(x)$ and $C(x, \lambda),$ that concerns properties of the residual members $s_{jk}(x, \lambda)$ in~\eqref{asymptotics}.
\begin{theorem}
Assume that the elements of the matrix $A - D$ belong to $L_2[\alpha, \infty),$ 
$\displaystyle\mathop{\mathrm{essinf}}_{x \ge \alpha}\, \rho(x) > 0,$ and $\gamma_\alpha(\lambda)= O(\lambda^{-1})$ for $|\lambda|\ge \lambda_\alpha.$ Then, for an arbitrary half-line $\Sigma \subset \overline{\Gamma_\kappa^\alpha}$ and the values $s_{jk}(x, \lambda)$ in~\eqref{asymptotics}, we have
$$\sup_{x \ge \alpha} |s_{jk}(x, \lambda)| \in L_2(\Sigma), \quad j,k = \overline{1, n}.$$
\label{complement}
\end{theorem}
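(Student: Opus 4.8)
The plan is to trace the estimate on $s_{jk}(x,\lambda)$ through the successive-approximation representation and reduce everything to two square-summability facts: that $\theta_\alpha \in L_2(\Sigma)$ (which is exactly Theorem~\ref{theta theorem}, available under the stated hypotheses $A-D \in L_2[\alpha,\infty)$ and $\essinf \rho > 0$) and that $\gamma_\alpha(\lambda) = O(\lambda^{-1})$ is square-summable along any half-line $\Sigma$ (trivially, since $\int_{\Sigma} |\lambda|^{-2}\,|d\lambda| < \infty$). From \eqref{s est} we already have the pointwise bound $\sup_{x\ge\alpha}|s_{jk}(x,\lambda)| \le N_\alpha(\gamma_\alpha(\lambda) + \theta_\alpha(\lambda))$, valid on $\overline{\Gamma_\kappa^\alpha}$. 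The right-hand side is independent of $x$, so the conclusion reduces to checking that $\gamma_\alpha + \theta_\alpha \in L_2(\Sigma)$; and since $L_2(\Sigma)$ is closed under addition, it suffices to verify each summand separately.

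First I would dispose of the $\gamma_\alpha$ term. By hypothesis $\gamma_\alpha(\lambda) = O(\lambda^{-1})$ for $|\lambda| \ge \lambda_\alpha$, so there is a constant $C$ with $\gamma_\alpha(\lambda) \le C|\lambda|^{-1}$ on the half-line $\Sigma \subset \overline{\Gamma_\kappa^\alpha}$ (note $|\lambda| > \lambda_\alpha > 0$ there, so the origin is excluded and $|\lambda|^{-1}$ is bounded on the bounded part and decays at infinity). Parametrizing $\Sigma$ by arclength in $|\lambda|$, one gets
\begin{equation*}
\int_\Sigma \gamma_\alpha(\lambda)^2 \, |d\lambda| \le C^2 \int_{\lambda_\alpha}^\infty \frac{dr}{r^2} < \infty,
\end{equation*}
so $\gamma_\alpha \in L_2(\Sigma)$. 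Second, for the $\theta_\alpha$ term I would invoke Theorem~\ref{theta theorem} directly: since $\Sigma \subset \overline{\Gamma_\kappa^\alpha} \subset \overline{\Gamma_\kappa} = \overline{\Lambda}$ (with $\Lambda = \Gamma_\kappa$, the choice made in the proof of Theorem~\ref{fss}) and the additional hypotheses of Theorem~\ref{theta theorem} are precisely those assumed here, we conclude $\theta_\alpha \in L_2(\Sigma)$. Adding the two, $\gamma_\alpha + \theta_\alpha \in L_2(\Sigma)$, and multiplying by the constant $N_\alpha$ preserves membership, which establishes the claim.

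The only point requiring a little care is that the half-line $\Sigma$ on which Theorem~\ref{theta theorem} is applied must lie in $\overline{\Lambda}$, whereas here $\Sigma$ is taken in $\overline{\Gamma_\kappa^\alpha}$, the truncated sector $\{\lambda \in \Gamma_\kappa : |\lambda| > \lambda_\alpha\}$ together with its boundary. Since $\overline{\Gamma_\kappa^\alpha} \subset \overline{\Gamma_\kappa}$ and the hypotheses on $A-D$ and $\rho$ match, Theorem~\ref{theta theorem} applies verbatim; the omission of a bounded neighborhood of the origin only improves integrability (the integrand $\theta_\alpha$ is bounded by $ae^{2a}$ there anyway, by \eqref{est}). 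I do not anticipate a genuine obstacle: the substantive analytic work — proving $\theta_\alpha \in L_2(\Sigma)$ — has been isolated into Theorem~\ref{theta theorem} and is deferred to Appendix~\ref{appendix A}, so the present argument is a short bookkeeping step combining that result with the elementary decay of $\gamma_\alpha$.
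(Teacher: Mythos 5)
Your proposal is correct and follows exactly the paper's route: the paper likewise derives the result as an immediate corollary of the bound \eqref{s est} together with Theorem~\ref{theta theorem} for $\theta_\alpha$ and the hypothesis $\gamma_\alpha(\lambda)=O(\lambda^{-1})$ for the remaining term. The paper simply states this in one sentence, while you have spelled out the bookkeeping.
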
 
The statement of the theorem is a simple corollary of inequality~\eqref{s est} and Theorem~\ref{theta theorem}.

%%%%%%%%%%%%%%%%%%%%%%%%%%%%%%%%%%%%%%

\section{Solutions analytic in large sectors} \label{large sec}
In this section, we restrict ourselves to the case when $\{b_j\}_{j=1}^n$ is the set of all $n$-th roots of unity, i.e. up to a numeration,  $b_j = \exp(\frac{2\pi i j}{n}),$ $j=\overline{1,n}.$  
Remind that exactly this case realizes after reducing $n$-th order equation~\eqref{n-th order} to system~\eqref{sys}.

If $n > 2,$ the sectors $\Gamma_\kappa$ introduced in Section~\ref{fss sec} have the form
\begin{equation} \label{sectors}
\Gamma_\kappa = \left\{ \lambda \in {\mathbb C}_0 \colon \frac{\pi(\kappa-1)}{n} < \arg \lambda < \frac{\pi \kappa}{n} \right\}, \quad \kappa = \overline{1, 2n}.
\end{equation}
If $n=2,$ then we have two sectors: the right half-plane $\Gamma_1 = \{ \lambda \in \mathbb C \colon \mathrm{Re}\, \lambda > 0\}$ and the left half-plane $\Gamma_2 = \{ \lambda \in \mathbb C \colon \mathrm{Re}\, \lambda < 0\}.$ Further, we consider the case $n > 2.$ The results of this section will be valid for $n = 2$ as well, if one takes formula~\eqref{sectors} as the definition of the sectors $\Gamma_\kappa,$ $\kappa=\overline{1,4}.$ However, in this case, Theorem~\ref{neighboring} is weaker than Theorem~\ref{fss}.

For $m \in \overline{2, n},$ consider a large sector including two neigboring sectors $\Gamma_\kappa:$ 
$$\Omega_m = \left\{ \lambda \in \mathbb C \colon \arg\lambda \in \Big([(-1)^{m-1} - 1]\frac{\pi}{2n}; \ [(-1)^{m-1}+3]\frac{\pi}{2n}\Big)\right\}.$$
It is clear that 
$\overline{\Omega_m} = \overline{\Gamma_1} \cup \overline{\Gamma_\sigma},$ 
where $\sigma = 2n$ if $m$ is even and $\sigma = 2$ if $m$ is odd.
In what follows, we fix the numeration of $\{ b_j\}_{j=1}^n$ for which~\eqref{numer1} holds in $\kappa = 1:$ 
\begin{equation} \label{b odd even}
b_{2s+1} =  e^{\frac{2 \pi i s}{n}}, \; s = \overline{0, \left\lfloor\frac{n-1}{2}\right\rfloor}; \quad b_{2p} =  e^{-\frac{2 \pi i p}{n}}, \; p = \overline{1, \left\lfloor\frac{n}{2}\right\rfloor}.
\end{equation}
For this fixed numeration and $\lambda$ from the neighboring sector $\Gamma_\sigma,$ some inequalities in~\eqref{numer1} become incorrect. %In fact, these inequalities hold if we swap the numbers in each pair $(b_j, b_{j+1}),$ $j=\overline{n-1},$ where $j$ has the same parity as $m.$
One can see that
\begin{equation} \label{omega2}
\mathrm{Re} \,(\lambda b_j) \ge \mathrm{Re} \, (\lambda b_{m+1}) \ge \mathrm{Re} \, (\lambda b_{l}), \quad j=\overline{1, m-1}, \; l = \overline{m, n}, \quad \lambda \in \overline{\Gamma_\sigma},
\end{equation}
where and below we put $b_{n+1} := b_n$ in the case $m=n.$ 

Now, we obtain non-fundamental system of solutions of~\eqref{sys} with analytic dependence on the spectral parameter from the large sector $\Omega_m.$
\begin{theorem} \label{neighboring}
Let $\alpha\ge 0,$ $m \in \overline{2, n},$ and $\Omega_m^\alpha := \{ \lambda \in \Omega_m \colon |\lambda|>\lambda_\alpha \}.$ For $\lambda \in \overline{\Omega_m^\alpha},$ there exists a system of solutions of~\eqref{sys} $U_\alpha(x, \lambda) = [u_{jk}(x, \lambda)]_{\substack{j=\overline{1,n}, \\k = \overline{m, n}}}$ possessing the following properties.
\begin{enumerate}
\item
For $j=\overline{1, n}$ and $k=\overline{m, n},$ uniformly on $x \ge\alpha,$ we have
\begin{equation*} \label{Z1}
u_{jk}(x, \lambda) = \left\{\begin{array}{cc}
O\left(e^{\lambda b_m (p(x)-p(\alpha))}\right), & \lambda \in \overline{\Gamma^\alpha_1}, \\
O\left(e^{\lambda b_{m+1} (p(x)-p(\alpha))}\right), & \lambda \in \overline{\Gamma_\sigma^\alpha}.
\end{array}\right.
\end{equation*}

\item For a fixed $x \ge 0,$ the functions $u_{jk}(x, \lambda),$ $j=\overline{1, n},$ $k=\overline{m, n},$ are continuous on $\overline{\Omega^\alpha_m}$ and analytic in $\Omega^\alpha_m.$

\item $
u_{jk}(\alpha, \lambda) = \delta_{jk},$ $ j,k=\overline{m, n}.
 $
\end{enumerate}
\end{theorem}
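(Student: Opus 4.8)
The plan is to build the required solutions separately on the two sectors $\overline{\Gamma_1^\alpha}$ and $\overline{\Gamma_\sigma^\alpha}$ that make up $\overline{\Omega_m^\alpha}$, and then to glue them into a single function analytic across their common boundary ray $\ell:=\overline{\Gamma_1}\cap\overline{\Gamma_\sigma}$. The obstruction to a one-shot construction is that, under the fixed numeration~\eqref{b odd even}, the pair $b_m,b_{m+1}$ exchanges its order across $\ell$: by~\eqref{numer1} we have $\mathrm{Re}(\lambda b_m)\ge\mathrm{Re}(\lambda b_{m+1})$ on $\overline{\Gamma_1}$, whereas by~\eqref{omega2} the reverse holds on $\overline{\Gamma_\sigma}$. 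Hence there is no single $\omega\in\mathbb C$ for which~\eqref{omega numer} holds on all of $\overline{\Omega_m}$ with bounded successive approximations, so Lemma~\ref{main lemma} cannot be applied on $\Omega_m$ directly. What does survive is that the modes $b_1,\dots,b_{m-1}$ stay strictly dominant throughout $\overline{\Omega_m}$, so the class of solutions recessive with respect to them is well defined on the whole sector.

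First I would apply Lemma~\ref{main lemma} twice, each time with pivot $k=m$, source vectors $\mathbf e_k$ ($k=\overline{m,n}$), and $\Lambda$ one of the two sectors, but with different factors $\omega$. On $\Lambda=\Gamma_1$ take $\omega=b_m$: then~\eqref{omega numer} is exactly~\eqref{numer1}, and the representation $[u_{jk}]_j=M_\alpha(x)\mathbf z_k(x,\lambda)e^{\lambda b_m(p(x)-p(\alpha))}$ with $\mathbf z_k=\mathbf e_k+{\cal V}_m(\lambda)\mathbf z_k$ gives a system $U^{(1)}_\alpha=[u^{(1)}_{jk}]$ analytic in $\Gamma_1^\alpha$ and continuous on $\overline{\Gamma_1^\alpha}$, recessive (the components $j<m$ are obtained in~\eqref{operat} by integration from $+\infty$), with $u^{(1)}_{jk}=O(e^{\lambda b_m(p(x)-p(\alpha))})$ from the boundedness of $\mathbf z_k$, and $u^{(1)}_{jk}(\alpha)=\delta_{jk}$ for $j\ge m$ since $M_\alpha(\alpha)=I$ and $({\cal V}_m\mathbf z_k)_j(\alpha)=0$. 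On $\Lambda=\Gamma_\sigma$ instead take $\omega=b_{m+1}$; by~\eqref{omega2} condition~\eqref{omega numer} again holds with $k=m$, and the analogous construction yields $U^{(\sigma)}_\alpha$ analytic in $\Gamma_\sigma^\alpha$, continuous up to the boundary, with $u^{(\sigma)}_{jk}=O(e^{\lambda b_{m+1}(p(x)-p(\alpha))})$ and the same normalization $u^{(\sigma)}_{jk}(\alpha)=\delta_{jk}$ for $j\ge m$. Taking the larger of the two thresholds supplied by Lemma~\ref{main lemma} gives a common $\lambda_\alpha$ with $\lambda_\alpha\to 0$ as $\alpha\to\infty$.

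The heart of the proof, and the step I expect to be the main obstacle, is to show that $U^{(1)}_\alpha$ and $U^{(\sigma)}_\alpha$ coincide on $\ell$, so that the piecewise definition is consistent. On $\ell\cap\{|\lambda|>\lambda_\alpha\}$ the modes $b_1,\dots,b_{m-1}$ are strictly dominant, so writing an arbitrary solution in the FSS basis of Theorem~\ref{fss} for $\Gamma_1$ (restricted to $\ell\subset\overline{\Gamma_1}$) shows that the space $S(\lambda)$ of recessive solutions is exactly the span of the FSS columns $m,\dots,n$, of dimension $n-m+1$. Both the columns of $U^{(1)}_\alpha$ and those of $U^{(\sigma)}_\alpha$ lie in $S(\lambda)$ and have lower block equal to the identity at $x=\alpha$; hence each family is a basis of $S(\lambda)$ in which the coordinates of a solution are precisely its lower-block values $\{y_j(\alpha)\}_{j\ge m}$. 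Since these coordinates agree, the difference of the two families is a recessive solution with vanishing lower block at $\alpha$, i.e. the zero solution, whence $U^{(1)}_\alpha=U^{(\sigma)}_\alpha$ on $\ell$.

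Defining $U_\alpha$ to equal $U^{(1)}_\alpha$ on $\overline{\Gamma_1^\alpha}$ and $U^{(\sigma)}_\alpha$ on $\overline{\Gamma_\sigma^\alpha}$ then produces, for each fixed $x$, a function continuous on $\overline{\Omega_m^\alpha}$ and analytic off the ray $\ell$; a Morera-type removable-singularity argument across the line $\ell$ (away from $\lambda=0$, which $\Omega_m^\alpha$ excludes) makes it analytic throughout $\Omega_m^\alpha$, giving property~2. Properties~1 and~3 are read off from the two sub-sector constructions, the two exponential bounds being mutually consistent on $\ell$ because $\mathrm{Re}(\lambda b_m)=\mathrm{Re}(\lambda b_{m+1})$ there. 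Finally, as in the proof of Theorem~\ref{fss}, each column is extended from $x\ge\alpha$ to $[0,\alpha]$ by solving~\eqref{sys} with the data at $x=\alpha$ through Proposition~\ref{finite interval prop}, which preserves analyticity; the degenerate case $m=n$, where $b_{m+1}:=b_n$ and a single column remains, is covered by the same argument.
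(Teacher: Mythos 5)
Your construction of the two families $U^{(1)}_\alpha$ (on $\overline{\Gamma_1^\alpha}$, with $\omega=b_m$) and $U^{(\sigma)}_\alpha$ (on $\overline{\Gamma_\sigma^\alpha}$, with $\omega=b_{m+1}$) is sound and coincides, up to normalization, with what the paper does on those two sectors. The gap is in the step you yourself call the heart of the proof: the identification of the two families on the common ray $\ell$. You argue that the space $S(\lambda)$ of solutions that are $O\big(e^{\mathrm{Re}(\lambda b_m)(p(x)-p(\alpha))}\big)$ is exactly the span of columns $m,\dots,n$ of the FSS of Theorem~\ref{fss}, so that a recessive solution is determined by its lower block at $x=\alpha$. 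This is not justified, for two reasons. First, the paper never assumes $\int_0^\infty\rho(t)\,dt=\infty$; if $p$ is bounded, all the exponentials $e^{\lambda b_j(p(x)-p(\alpha))}$ are bounded above and below, every solution of~\eqref{sys} is ``recessive'', $\dim S(\lambda)=n$, and a recessive solution with vanishing lower block at $\alpha$ need not vanish. Second, even when $p(x)\to\infty$, the asymptotics~\eqref{asymptotics} are asymptotics in $\lambda$, uniform in $x$: for a fixed $\lambda\in\ell$ with $|\lambda|>\lambda_\alpha$ the remainders satisfy only~\eqref{s est}, a bound that need not be small, so you have no lower bound on the growth of columns $1,\dots,m-1$ as $x\to\infty$ and cannot exclude that some nontrivial combination of them lies in $S(\lambda)$. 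Establishing agreement only for large $|\lambda|$ would not rescue the argument: the two pieces are analytic on opposite sides of $\ell$, and agreement on a subray does not propagate to the remaining segment of $\ell$, which would stay a genuine branch cut.

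The correct identification is made at the level of the integral equation, not of the solution space of the ODE. The paper introduces the intermediate sector $\Lambda$ of~\eqref{mid Lambda} with $\omega=b_m e^{(-1)^m\pi i/n}$, so that the three sectors overlap in open sets, and on each overlap invokes the uniqueness of the solution of~\eqref{m sys} subject to the growth bound~\eqref{z_k ineq} --- uniqueness that holds because such a solution rescales to the unique bounded fixed point of~\eqref{eqq}; this also makes your Morera step unnecessary. Your two-sector scheme can be repaired in the same spirit: on $\ell$ one has $\mathrm{Re}(\lambda b_m)=\mathrm{Re}(\lambda b_{m+1})$, so $e^{\lambda(b_{m+1}-b_m)(p(x)-p(\alpha))}\mathrm{\bf z}^{\sigma}_k$ is bounded, satisfies the same conditions~\eqref{bc2} and hence the same integral equation as $\mathrm{\bf z}^{1}_k$, and coincides with it by the fixed-point uniqueness of Lemma~\ref{main lemma}; after that your Morera argument across $\ell$ does give property~2. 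As written, however, the proof has a genuine gap.
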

 In the proof, we need a lemma.
 \begin{lemma}
Consider the number $\omega = b_m \exp\big(\frac{(-1)^m \pi i}{n}\big)$ and the sector
\begin{equation} \label{mid Lambda}
\Lambda = \left\{\lambda \in \mathbb C_0 \colon \arg \lambda \in \Big(\frac{(-1)^{m-1}\pi}{2n}; \frac{(-1)^{m-1}\pi}{2n} +\frac{\pi}{n}\Big)\right\},
\end{equation}
see Figure~\ref{picture}.
The following inequalities hold:
\begin{equation}\label{numer2}
\mathrm{Re}\,(\lambda b_j) \ge \mathrm{Re}\,(\lambda \omega) \ge\mathrm{Re}\,(\lambda b_l), \quad j=\overline{1, m-1},\;l=\overline{m,n}, \quad \lambda \in \overline{\Lambda}.
\end{equation}  
 \end{lemma}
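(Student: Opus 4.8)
The plan is to reduce the two–sided inequality \eqref{numer2} to an elementary comparison of cosines, and then to a comparison of integer distances, exploiting that every argument that occurs is a multiple of $\pi/n$.

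First I would record that $|b_j| = |\omega| = 1$, so that $\mathrm{Re}(\lambda w) = |\lambda|\cos(\arg\lambda + \arg w)$ for $w \in \{b_1,\dots,b_n,\omega\}$. At $\lambda = 0$ all three quantities in \eqref{numer2} vanish, so it suffices to treat $\lambda \neq 0$; dividing by $|\lambda| > 0$ and setting $\phi := \arg\lambda$, the claim \eqref{numer2} becomes, for every $\phi$ in the closed $\arg$–range of $\overline\Lambda$ given by \eqref{mid Lambda},
$$\cos(\phi + \arg b_j) \ge \cos(\phi + \arg\omega) \ge \cos(\phi + \arg b_l), \quad j=\overline{1,m-1},\; l=\overline{m,n}.$$

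Next I would pass to a convenient integer scale. By the numeration \eqref{b odd even} each $\arg b_j$ is an even multiple of $\pi/n$: writing $\arg b_j = \tfrac{\pi}{n}u_j$, one has $u_{2s+1} = 2s$ and $u_{2p} = -2p$. A direct computation from $\omega = b_m\exp(\tfrac{(-1)^m\pi i}{n})$ gives $\arg\omega = \tfrac{\pi}{n}u_\omega$ with $u_\omega = -(m-1)$ for even $m$ and $u_\omega = m-2$ for odd $m$; in particular $u_\omega$ is always odd. Likewise $\phi = \tfrac{\pi}{n}\tau$, where $\tau$ runs over $[-\tfrac12,\tfrac12]$ for even $m$ and over $[\tfrac12,\tfrac32]$ for odd $m$. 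Since $x \mapsto \cos(\tfrac{\pi}{n}x)$ is even, has period $2n$, and is strictly decreasing in the distance $d_n(x) := \mathrm{dist}(x,2n\mathbb Z) \in [0,n]$, the displayed cosine inequalities are equivalent to the distance inequalities $d_n(\tau+u_j) \le d_n(\tau+u_\omega) \le d_n(\tau+u_l)$.

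The remaining step is a short estimate of these distances, which I would carry out for even $m=2p$, the odd case being entirely analogous. Here the top indices $j<m$ give even $u_j$ with $|u_j| \le 2p-2$, the bottom indices $l\ge m$ give even $u_l$ with $|u_l| \ge 2p$, while $u_\omega = -(2p-1)$ lies strictly between them, so that no wrap–around occurs for $\omega$ and $d_n(\tau+u_\omega) = |\tau+u_\omega| = 2p-1-\tau \in [2p-\tfrac32,\,2p-\tfrac12]$. For $|\tau| \le \tfrac12$ this immediately gives $|\tau+u_j| \le 2p-\tfrac32 \le |\tau+u_\omega|$ for every top index, establishing the left inequalities, and $|\tau+u_\omega| \le 2p-\tfrac12 \le d_n(\tau+u_l)$ for every bottom index, establishing the right ones; the only place where $d_n$ must be used in place of $|\cdot|$ is the extreme value $|u_l| = n$ (the root $-1$ when $n$ is even), where the wrap–around only enlarges $d_n(\tau+u_l)$ and the bound persists. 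One sees in passing that equality in \eqref{numer2} occurs exactly at the two endpoints of $\overline\Lambda$, where $\omega$ ties with $b_{m-1}$ from above and with $b_{m+1}$ from below. The main, though modest, obstacle is precisely this bookkeeping of the two groups together with the wrap–around at $b=-1$; once the problem is transported to the integer scale $d_n$, the inequalities are transparent.
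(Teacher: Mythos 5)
Your reduction is sound and, at its core, close to the paper's own argument: both proofs restrict to $|\lambda|=1$ and exploit the monotonicity of the cosine on the relevant arcs, the paper by computing the full range of each $\mathrm{Re}(\lambda b_j)$ as a closed interval of cosine values and checking that the interval for $\omega$ sits between the two groups, you by transporting everything to the scale $\pi/n$ and comparing distances $d_n(\cdot)=\mathrm{dist}(\cdot,2n\mathbb Z)$. Your device is arguably cleaner for the pointwise inequality that is actually needed, and your computation of $u_\omega$ and of the $\tau$-ranges is correct. The even case $m=2p$ as you write it is essentially complete, with one slip of direction: wrap-around can only \emph{decrease} $d_n$ relative to $|\cdot|$ (one always has $d_n(x)\le|x|$), so the phrase ``the wrap-around only enlarges $d_n(\tau+u_l)$'' is backwards; the bound nevertheless persists because in the only wrap-around case $u_l=-n$ one gets $d_n(\tau-n)=n-|\tau|\ge n-\tfrac12\ge 2p-\tfrac12$, using $2p\le n$.

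The genuine gap is the claim that the odd case is ``entirely analogous.'' The engine of your even case is a strict separation by absolute value: $|u_j|\le 2p-2<|u_\omega|=2p-1<2p\le|u_l|$, combined with $|\tau|\le\tfrac12$. For odd $m=2s_0+1$ this separation fails: $u_\omega=m-2$, while the top group contains $b_{m-1}$ with $u=-(m-1)$ and the bottom group contains $b_m$ with $u=m-1$, so both groups contain an element whose absolute value exceeds $|u_\omega|$, and moreover $\tau$ now ranges over $[\tfrac12,\tfrac32]$, so the bound $|\tau+u_j|\le|u_j|+\tfrac12$ is no longer available. The inequalities \eqref{numer2} still hold, but only because the \emph{signed} location of $\tau$ breaks the tie: e.g.\ for $u_j=-(m-1)$ one needs $(m-1)-\tau\le(m-2)+\tau$, i.e.\ $\tau\ge\tfrac12$, with equality exactly at the endpoint. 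So the odd case requires a re-worked bookkeeping (tracking signs relative to $u_\omega$ rather than absolute values), not a verbatim repetition. The paper's interval computations, being organized per index via \eqref{b odd even} rather than via a single absolute-value threshold, transfer between parities more mechanically. You should either carry out the odd case explicitly with the signed estimates, or reorganize the even case so that the same signed argument covers both parities.
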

 \begin{figure}
\begin{minipage}{0.45\textwidth}
\begin{center}
 {$m$ is odd}
%\begin{tikzpicture}
%
%\draw[thin,->] (0, -1) -- (0, 4.5);
%\draw[thin,->] (-2,0) -- (5.5,0);
%
%\draw[white, left color=blue, right color=white, fill opacity=0.5]
%(0,0) -- +(45:5.4) arc (45:15:5.4);
%
%\draw[white, bottom color=red, top color=white, fill opacity=0.2]
%(0,0) -- +(30:5.4) arc (30:60:5.4);
%
%\draw[white, left color=green, right color=white, fill opacity=0.2]
%(0,0) -- (5.3,0) -- (5,2.9);
%
%
%\draw[thick] (0,0) -- (5.3,0);
%\draw[thick] (0,0) -- (4.7,2.7);
%\draw[thick] (0,0) -- (2.4,4.1);
%\draw[thick, dashed, blue] (0,0) -- (3.9,3.9);
%\draw[thick,dashed, blue] (0,0) -- (5.5,1.5);
%
%\node[below] at (5.5, 0) {$x$};
%\node[left] at (0, 4.5) {$y$};
%\node[left] at (0,-0.3) {$0$};
%\node at (-1.1, 0.9) {$\lambda = x + i y$};
%\node at (5, 3.2) {\color{blue} \large $\Lambda$};
%\draw [thin, blue, decorate, decoration={brace}] (4,4) -- (5.5,1.5);
%\node at (2, 2.5) {\color{red} \large $\Gamma_2$};
%\node at (3.5, 0.5) {\color{green} \large $\Gamma_1$};
%\draw [thin, red
%] (1.7, 2.3) -- (1.5, 2);
%\draw [thin, red
%] (2.3, 2.5) -- (2.7, 2.5);
%\draw [thin, green
%] (3.15, 0.55) -- (2.85, 0.47);
%\draw [thin, green
%] (3.4, 0.8) -- (3.4, 1.2);
%
%\draw [thin] (0, 0) + (0:1) arc (0:15:1);
%\draw [thin] (0, 0) + (30:1) arc (30:45:1);
%\draw [thin] (0, 0) + (30:1.1) arc (30:15:1.1);
%\draw [thin] (0, 0) + (60:1.1) arc (60:45:1.1);
%\end{tikzpicture}
\includegraphics{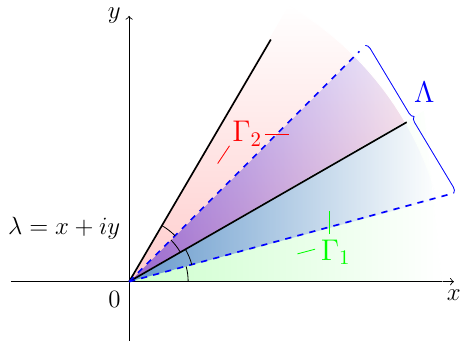}
\end{center}
\end{minipage} 
\hspace{1cm}
\begin{minipage}{0.45\textwidth}
\begin{center}
 {$m$ is even}
%\begin{tikzpicture}
%
%\draw[white, left color=blue, right color=white, fill opacity=0.5]
%(0,0) -- (4.4,1.85) -- (4.4,-1.85);
%
%\draw[white, left color=red, right color=white, fill opacity = 0.2]
%(0,0) -- +(0:4.3) arc (0:-22.5:4.3);
%\draw[white, left color=red, right color=white, fill opacity = 0.2]
%(0,0) -- +(-22.5:4.3) arc (-22.5:-45:4.3);
%
%\draw[white, left color=green, right color=white, fill opacity =0.2]
%(0,0) -- +(0:4.3) arc (0:45:4.3);
%
%
%\draw[thin,->] (-1, 0) -- (4.5, 0);
%\draw[thin,->] (0, -2.5) -- (0, 3);
%
%\draw[thick] (0,0) -- (4.3,0);
%\draw[thick] (0,0) -- (3,3);
%\draw[thick] (0,0) -- (2.7,-2.7);
%\draw[thick, dashed, blue] (0,0) -- (4.5,1.87);
%\draw[thick,dashed, blue] (0,0) -- (4.5,-1.87);
%
%\node[below] at (4.4, 0) {$x$};
%\node[left] at (0, 3) {$y$};
%\node[left] at (0,0.3) {$0$};
%\node at (-1.1, -0.9) {$\lambda = x + i y$};
%\node at (5, 0) {\color{blue} \large $\Lambda$};
%\draw [thin, blue, decorate, decoration={brace}]  (4.65,1.87) --  (4.65,-1.87);
%\node at (2.5, -1.5) {\color{red} 
%\large $\Gamma_{2n}$};
%\node at (2.5, 1.5) {\color{green} 
%\large $\Gamma_1$};
%\draw [thin, red
%] (2.05, -1.4) -- (1.7, -1.28);
%\draw [thin, red
%] (2.3, -1.2) -- (2.3, -0.8);
%\draw [thin, green
%] (2.05, 1.4) -- (1.7, 1.28);
%\draw [thin, green
%] (2.3, 1.2) -- (2.3, 0.8);
%
%\draw [thin] (0, 0) + (0:1) arc (0:22.5:1);
%\draw [thin] (0, 0) + (-22.5:1) arc (-22.5:-45:1);
%\draw [thin] (0, 0) + (0:1.1) arc (0:-22.5:1.1);
%\draw [thin] (0, 0) + (22.5:1.1) arc (22.5:45:1.1);
%\end{tikzpicture}
\includegraphics{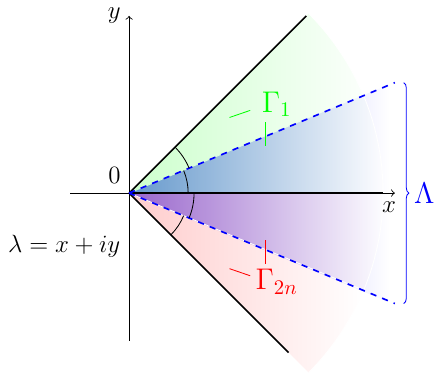}
\end{center}
\end{minipage}
  \caption{The sectors $\Gamma_1,$ $\Gamma_\sigma,$ and $\Lambda.$}
  \label{picture}
\end{figure}

 \begin{proof}
It is sufficient to consider only $\lambda$ such that $|\lambda|=1.$ Then, $\lambda \in \overline{\Lambda}$ if and only if $\lambda = e^{i \tau}$ for $\tau \in \Big[\frac{(-1)^{m-1}\pi}{2n}; \frac{(-1)^{m-1}\pi}{2n} +\frac{\pi}{n}\Big].$ For any fixed $z \in \mathbb C_0,$ ${\mathrm Re}\,(\lambda z)$ runs over a closed segment as $\lambda$ runs over $\overline{\Lambda}$ and $|\lambda|=1.$ Let us find these segments  for $z \in \{ \omega, b_1, \ldots, b_n\}.$ 

For definiteness, we assume that $m$ is odd; the case of even $m$ is proceeded analogously. By~\eqref{b odd even}, for $s=\overline{0, \left\lfloor\frac{n-1}{2}\right\rfloor},$ we have
$${\mathrm Re}\, (\lambda b_{2s+1}) = {\mathrm Re}\, e^{i\big(\tau + \frac{2\pi s}{n}\big)} = \cos \Big(\tau + \frac{2\pi s}{n}\Big),$$ 
$$\tau + \frac{2\pi s}{n} \in \big[(4s+1)\zeta; \, (4s+3)\zeta \big] \subset \left[\zeta; \pi + \zeta\right], \quad \zeta := \frac{\pi}{2n}.$$
Since the cosine function decreases on $[\zeta;\, \pi]$ and increases on $[\pi ; \, \pi +\zeta]$ (the latter segment appears only for $s = \frac{n-1}{2}$ if $n$ is odd), we have
\begin{equation} \label{2s+1}
{\mathrm Re}\, (\lambda b_{2s+1}) \in \left[\cos \big(\min\{(4s+3)\zeta, \pi\}\big);\, \cos \big((4s+1)\zeta\big)\right], \quad s = \overline{0, \left\lfloor\frac{n-1}{2}\right\rfloor}. 
\end{equation}
%wherein one should substitute $\cos \pi$ instead of $\cos \frac{4s+3}{2n}\pi$ if $s = \frac{n-1}{2}.$

Analogously, we obtain
\begin{equation} \label{2s}
{\mathrm Re}\, (\lambda b_{2p}) \in \left[\cos \big((4p-1)\zeta\big);\, \cos \big((4p-3)\zeta\big)\right], \quad p = \overline{1, \left\lfloor\frac{n}{2}\right\rfloor},
\end{equation}  
\begin{equation} \label{omega segments}
{\mathrm Re}\, (\lambda \omega) \in \left[\cos \big((4k+1)\zeta\big);\, \cos \big(\max\{(4k-1)\zeta, 0\}\big)\right], \quad k = \frac{m-1}{2}.
\end{equation}  
%wherein one should substitute $\cos 0$ instead of $\cos \frac{4k-1}{2n}\pi$ if $k=0.$ 
One can see that the segments in~\eqref{omega segments} can intersect the segments in~\eqref{2s+1}--\eqref{2s} only at the boundary points. Then, 
$${\mathrm Re}\, (\lambda \omega) \ge {\mathrm Re}\, (\lambda b_{2s+1}) \Leftrightarrow \cos \big((4s+1)\zeta\big)\pi \le \cos \big((4k+1)\zeta\big) \Leftrightarrow s \ge k \Leftrightarrow 2s+1 \ge m.$$  
$${\mathrm Re}\, (\lambda \omega) \ge {\mathrm Re}\, (\lambda b_{2p}) \Leftrightarrow \cos \big((4p-3)\zeta\big)\pi \le \cos \big((4k+1)\zeta\big) \Leftrightarrow p -1\ge k \Leftrightarrow 2p \ge m.$$ 
Thus, we have proved~\eqref{numer2} for all possible values $j=2s+1$ or $j=2p.$
 \end{proof}
\begin{proof}[Proof of Theorem~\ref{neighboring}.]
As in the proof of Theorem~\ref{fss}, applying substitutions~\eqref{y_k} for each fixed $k=\overline{m,n},$
we obtain system~\eqref{z sys} with respect to the column vector $\mathrm{\bf z}_k = [z_{jk}(x, \lambda)]_{j=1}^n.$  Integrating it under the conditions
\begin{equation}\label{bc2}
\left\{\begin{array}{cc}
\displaystyle\lim_{x \to \infty} z_{jk}(x, \lambda) = 0, & j=\overline{1, m-1},\\
z_{jk}(\alpha, \lambda) = \delta_{jk}, & j=\overline{m, n},
\end{array} \right.
\end{equation}
 we arrive at the system of integral equations
 \begin{equation} \label{m sys}
z_{jk}(x, \lambda) = \left\{\begin{array}{cc}
 \displaystyle  -\sum_{l=1}^n \int_x^\infty v_{jl}(t, \lambda) e^{\lambda(b_j - b_k)(p(x)-p(t))} z_{lk}(t, \lambda) \, dt, & j=\overline{1, m-1}, \\[2mm]
\displaystyle\delta_{jk} + \sum_{l=1}^n \int_\alpha^x v_{jl}(t, \lambda) e^{\lambda(b_j - b_k)(p(x)-p(t))} z_{lk}(t, \lambda) \, dt,  & j=\overline{m, n}.
\end{array}\right.
\end{equation}
Let us study solvability of this system in the class of vector functions $\mathrm{\bf z}_k = [z_{jk}(x, \lambda)]_{j=1}^n,$ $x \ge \alpha,$ whose components belong to $AC[\alpha, T]$ for any $T > \alpha.$

First, we consider $\lambda \in \overline{\Lambda},$ where $\Lambda$ is given in~\eqref{mid Lambda}, see Figure~\ref{picture}.
Substituting $\mathrm{\bf z}_k = e^{\lambda(\omega - b_k)(p(x) - p(\alpha))}\mathrm{\bf \tilde z}_k$ into~\eqref{m sys}, we obtain an equation with respect to $\mathrm{\bf \tilde z}_k:$
\begin{equation} \label{eqq}
\mathrm{\bf \tilde z}_k = \mathrm{\bf w}_k + {\cal V}_m(\lambda) \mathrm{\bf \tilde z}_k, \quad \mathrm{\bf w}_k := e^{\lambda(b_k - \omega)(p(x) - p(\alpha))} \mathrm{\bf e}_k,
\end{equation}
where ${\cal V}_m(\lambda)$ is determined in Lemma~\ref{main lemma}, while $\mathrm{\bf e}_k$ is given in~\eqref{int sys}.
Inequalities~\eqref{numer2} yield that $\| \mathrm{\bf w}_k \|_{\bf BC_n} \le 1$ and assure applicability of Lemma~\ref{main lemma}.
 By virtue of this lemma, for $\lambda \in \overline{\Lambda^\alpha},$ there exists a unique solution of~\eqref{eqq} $\mathrm{\bf \tilde z}_k \in {\bf BC}_n.$  
Consequently, for  $\lambda \in \overline{\Lambda^\alpha},$ there exists a solution of~\eqref{m sys} $\mathrm{\bf z}^0_k(\lambda) = e^{\lambda(\omega - b_k)(p(x) - p(\alpha))}\mathrm{\bf \tilde z}_k$ such that
\begin{equation} \label{z_k ineq}
\mathrm{\bf z}^0_k=[z^0_{jk}(x,\lambda)]_{j=1}^n, \quad |z^0_{jk}(x,\lambda)|\le N_\alpha \left|e^{\lambda(\omega - b_k)(p(x) - p(\alpha))}\right|, \quad x \ge \alpha, \; \lambda \in \overline{\Lambda^\alpha}, \quad j = \overline{1, n},
\end{equation}
with some $N_\alpha >0.$ 
Moreover, for each fixed $x \ge \alpha,$  the components  $z^0_{jk}(x, \lambda),$  $j=\overline{1,n},$ are analytic in $\Lambda^\alpha$ and continuous on $\overline{\Lambda^\alpha}.$ Note that the solution of~\eqref{m sys} satisfying~\eqref{z_k ineq} is {\it unique}, otherwise equation~\eqref{eqq} would have at least two solutions from ${\bf BC}_n.$ 

Considering $\Gamma_1$ instead of $\Lambda$  and $b_m$ instead of $\omega,$ 
we can apply the same reasoning as in the previous paragraph, because, by the numeration of $\{ b_j\}_{j=1}^n,$ inequalities~\eqref{numer2} hold with $\omega = b_m$ and $\Lambda = \Gamma_1.$
Then, for $\lambda \in \overline{\Gamma_1^\alpha},$ there exists a solution of~\eqref{m sys}  $\mathrm{\bf z}^1_k(\lambda)=[z^1_{jk}(x,\lambda)]_{j=1}^n$ satisfying 
\begin{equation}
\label{a1}
|z^1_{jk}(x, \lambda)| \le N_\alpha \big|e^{\lambda(b_m - b_k)(p(x) - p(\alpha))}\big|, \quad x \ge \alpha, \; \lambda \in \overline{\Gamma_1^\alpha}, \quad j=\overline{1, n}.
\end{equation}
For each fixed $x \ge \alpha,$ its components are continuous on $\overline{\Gamma_1^\alpha}$ and analytic in $\Gamma_1^\alpha.$ 

Put $\mathrm H_1 = \Lambda^\alpha \cap \Gamma_1^\alpha$ and $\Xi_1 = \Lambda^\alpha \cup \Gamma_1^\alpha.$ Now, we prove that $\mathrm{\bf z}^0_k(\lambda)= \mathrm{\bf z}^1_k(\lambda)$ for $x\ge\alpha$ and $\lambda \in \overline{\mathrm H_1}.$ By~\eqref{numer2}, we have  
$$\big|e^{\lambda(b_m - b_k)(p(x) - p(\alpha))}\big|\le \left|e^{\lambda(\omega - b_k)(p(x) - p(\alpha))}\right|, \quad x\ge\alpha, \; \lambda \in \overline{\mathrm H_1}.$$ Then, in the overlapping part of two sectors $\lambda \in \overline{\mathrm H_1},$ the solution of~\eqref{m sys} $\mathrm{\bf z}^1_k(\lambda)$ satisfies~\eqref{z_k ineq}. However, for $\lambda \in \overline{\Lambda^\alpha},$ such solution is unique. By this reason, $\mathrm{\bf z}^0_k(\lambda)$ and $\mathrm{\bf z}^1_k(\lambda)$ coincide for $\lambda \in \overline{\mathrm H_1}.$
 This means that for $\lambda \in \overline{\Xi_1},$ we can consider a unified solution $\mathrm{\bf z}_k(\lambda)=[z_{jk}(x,\lambda)]_{j=1}^n$ putting 
 $\mathrm{\bf z}_k(\lambda) = \mathrm{\bf z}^0_k(\lambda)$ for $\lambda \in \overline{\Lambda^\alpha}$ and $\mathrm{\bf z}_k(\lambda) = \mathrm{\bf z}^1_k(\lambda)$ for $\lambda \in \overline{\Gamma_1^\alpha}\setminus\overline{\mathrm H_1}.$
  For each fixed $x \ge \alpha,$ the components $z_{jk}(x,\lambda),$ $j=\overline{1,n,}$ are continuous on $\overline{\Xi_1}$ and analytic in $\Xi_1,$ being analytic continuations.

Analogously, using~\eqref{omega2}, for $\lambda \in \overline{\Gamma_\sigma^\alpha},$ we construct a solution of~\eqref{m sys} $\mathrm{\bf z}^\sigma_k=[z^\sigma_{jk}(x,\lambda)]_{j=1}^n$  satisfying
\begin{equation}\label{a2}
|z^\sigma_{jk}(x, \lambda)| \le N_\alpha \big|e^{\lambda(b_{m+1} - b_k)(p(x) - p(\alpha))}\big|, \quad x \ge \alpha,\; \lambda \in \overline{\Gamma_\sigma^\alpha}, \quad j=\overline{1, n}.
\end{equation}
For each fixed $x \ge \alpha,$ its components are continuous on $\overline{\Gamma_\sigma^\alpha}$ and analytic in $\Gamma_\sigma^\alpha.$ Put $\mathrm H_\sigma = \Lambda^\alpha \cap \Gamma_\sigma^\alpha$ and $\Xi_\sigma = \Lambda^\alpha \cup \Gamma_\sigma^\alpha.$ Obviously, $\Xi_1 \cup \Xi_\sigma = \Omega_m^\alpha.$
Applying the reasoning similar to the previous paragraph, we obtain that $\mathrm{\bf z}^\sigma_k(\lambda) = \mathrm{\bf z}^0_k(\lambda)$  for $\lambda \in \overline{\mathrm H_\sigma}.$
Thus, the solution $\mathrm{\bf z}^\sigma_k(\lambda)$ extends  $\mathrm{\bf z}_k$  from the overlapping part $\lambda \in\overline{\mathrm H_\sigma}$ on $\lambda \in \overline{\Xi_\sigma}.$ 
 Putting $\mathrm{\bf z}_k(\lambda) = \mathrm{\bf z}^\sigma_k(\lambda)$ for $\lambda \in \overline{\Gamma_\sigma^\alpha}\setminus\overline{\mathrm H_\sigma},$ we obtain a unified solution $\mathrm{\bf z}_k=[z_{jk}(x,\lambda)]_{j=1}^n,$ whose components  are  continuous on $\overline{\Omega_m^\alpha}$ and analytic in $\Omega_m^\alpha$ for each fixed $x \ge \alpha.$ 

Consider $$u_{jk}(x, \lambda) = \exp\left(\int_\alpha^x a_{jj}(t)\,dt + \lambda b_k (p(x) - p(\alpha))\right)z_{jk}(x, \lambda), \quad j=\overline{1,n},\;k = \overline{m,n}.$$ 
Then, the matrix $U_\alpha(x, \lambda) = [u_{jk}(x, \lambda)]_{\substack{j=\overline{1,n},\\ k=\overline{m, n}}}$ is a system of solutions of~\eqref{sys} for  $x \ge \alpha.$ 
Using~\eqref{bc2}, \eqref{a1}, and~\eqref{a2}, it is easy to obtain properties~1 and~3. Analyticity and continuity of $z_{jk}$ yield property~2 for $x \ge \alpha.$
 It remains to extend  each column vector $[u_{jk}(x, \lambda)]_{j=1}^n$ on $x \in [0, \alpha]$ by the solution of system~\eqref{sys} under the initial condition $\mathrm{\bf y}(\alpha, \lambda)=[u_{jk}(\alpha, \lambda)]_{j=1}^n$ and apply Proposition~\ref{finite interval prop}.
\end{proof}

In Theorem~\ref{neighboring}, we obtained the system of solutions
 $U_\alpha(x, \lambda)$ consisting of $n-m+1$ vector functions, which is non-fundamental. However, for $\lambda \in \overline{\Gamma^\alpha_1}$ or $\lambda \in \overline{\Gamma^\alpha_\sigma},$ it can be supplemented with the first $m-1$ columns of the system $Y_\alpha(x, \lambda)$ constructed in Theorem~\ref{fss}. Then, by properties~3 of Theorems~\ref{fss} and~\ref{neighboring}, we obtain a FSS.

\begin{remark}
One can consider system~\eqref{sys} on the line $x \in \mathbb R$ under the conditions similar to~I--III, substituting $[0, \infty)$ by $\mathbb R$ and assuming that $\rho \in L(-T, T)$ for any $T > 0.$  
For finite $\alpha\in \mathbb R,$ the results of Theorems~\ref{fss}--\ref{neighboring} are carried to the case of the line, since we can construct the solutions with the needed properties for $x \ge \alpha$ and extend them on $x \in [-T, \alpha]$ for any $T > 0.$ 
Note that the line has other specifics concerning the statement of inverse spectral problems.  Inverse scattering problems considered in~\cite{beals, nova} require the solutions determined by the behavior at $x \to \infty,$ that differ from the solutions constructed here.
\end{remark}

 %%%%%%%%%%%%%%%%%%%%%%%%%%%%%%%%%%%%%%%
 
 \section{Application to second-order equations with distribution potentials} \label{n=2}

In this section, we consider application of our results to the following second-order equation with respect to a function $u:$
\begin{equation}\label{pencil}
-u''(x) + q(x) u(x) + z p_0(x) u(x) = z^2 u(x), \quad x \ge 0,
\end{equation}
where $z \in \mathbb C$ is the spectral parameter. 
We assume that for some $\sigma \in L \cap L_2[0, \infty),$ $q = \sigma'$ in the sense of distributions; then, $q$ is a distribution potential.
Let also $p_0\in L \cap L_2[0, \infty).$ In the particular case $p_0 = 0$ equality~\eqref{pencil} turns into the Sturm--Liouville equation with a distribution potential, see~\cite{S-L regularization}.
Further, we reduce~\eqref{pencil} to first-order system~\eqref{sys} with $n=2.$ Then, we apply Theorems~\ref{fss} and~\ref{complement} to obtain FSS of~\eqref{pencil}. 
For equation~\eqref{pencil} with distribution potential $q,$ construction of other solutions was considered in~\cite{manko, pronska}. %The solutions of~\eqref{pencil} were obtained from the solutions of a first-order system under the initial conditions at $x =0$~(see~\cite{pronska}) or at $x \to \infty$ (see~\cite{manko}).

First, we apply a regularization approach offered in~\cite{S-L regularization}.
 Introduce the quasi-derivative
\begin{equation} \label{quasi-derivative}
u^{[1]} = u' - \sigma u.
\end{equation}
Imposing the restrictions $u, u^{[1]} \in AC[0, T)$ for any $T>0,$
 we can rewrite~\eqref{pencil} in the regularized form
\begin{equation} \label{regularized}
-(u^{[1]})' - \sigma u^{[1]} - \sigma^2 u + z p_0(x) u = z^2 u, \quad x \ge 0, \; z \in \mathbb C,
\end{equation}
where the left side belongs to $L[0, T)$ for any $T >0.$
Equalities~\eqref{quasi-derivative} and~\eqref{regularized} should be treated together as a first-order system with respect to the functions $u$ and $u^{[1]}.$

Consider the spectral parameter $\lambda = z i \in \mathbb C_0$ and the vector-function $\mathrm{\bf v} = [v_j]_{j=1}^2,$ where $v_1 = u$ and $v_2 = \frac{u^{[1]}}{\lambda}.$  Then,~\eqref{quasi-derivative} and~\eqref{regularized} are equivalent to the system
\begin{equation} \label{sys prev}
\mathrm{\bf v}' = \lambda \begin{pmatrix}
0 & 1 \\
1 & 0
\end{pmatrix} \mathrm{\bf v} + \begin{pmatrix}
\sigma & 0 \\
-p_0i & -\sigma
\end{pmatrix} \mathrm{\bf v} -
\begin{pmatrix}
0 & 0 \\
\frac{\sigma^2}{\lambda} & 0
\end{pmatrix} \mathrm{\bf v}.
\end{equation}
It remains to bring this system to form~\eqref{sys}. For this purpose, consider the matrices
$$\Theta = \begin{pmatrix}
1 & 1 \\
1 & -1
\end{pmatrix}, \quad  \Theta^{-1} = \frac{1}{2}\begin{pmatrix}
1 & 1\\
1 & -1
\end{pmatrix}.$$
Making substitution $\mathrm{\bf y} = \Theta^{-1} \mathrm{\bf v},$ for $\lambda \in \mathbb C_0,$ we arrive at the equivalent system
\begin{equation} \label{sys coeff} \mathrm{\bf y}' =
\lambda \begin{pmatrix}
1 & 0 \\
0 & -1
\end{pmatrix}\mathrm{\bf y} +
\begin{pmatrix}
\frac{-ip_0}{2} & \sigma - \frac{ip_0}{2} \\
 \sigma+\frac{ip_0}{2} & \frac{ip_0}{2}
\end{pmatrix}\mathrm{\bf y}+ \frac{\sigma^2}{2\lambda}\begin{pmatrix}
-1 & -1 \\
1  & 1
\end{pmatrix}\mathrm{\bf y}.\end{equation}

It is easy to see that~\eqref{sys coeff} has form~\eqref{sys} with matrices $A,$ $C,$ and $F$ satisfying assumptions I--III, while $n=2$ and  $\rho(x) \equiv 1.$
As well, the matrix $C = \frac{\sigma^2}{2\lambda}\begin{pmatrix}
-1 & -1 \\
1  & 1
\end{pmatrix}$ satisfies condition~\eqref{C alpha}.
To prove this, we verify condition~\eqref{C alpha} for the matrices $C$ of a more general form.

\begin{prop} \label{C prop}
Let $C(x, \lambda)=\sum_{k=1}^{N} C_k(x)\lambda^{-k},$  where $N>0$ and $C_k,$ $k=\overline{1,N},$ are arbitrary matrices of order $n$ with the elements from $L[0, \infty).$
Then, the matrix $C$ satisfies~\eqref{C alpha}.
\end{prop}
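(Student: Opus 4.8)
The plan is to bound $\|C(\cdot,\lambda)\|_{L[\alpha,\infty)}$ term by term and then to choose the threshold $\varphi(\alpha)$ so as to balance the decay of the tails of the $C_k$ against the growth of the factors $|\lambda|^{-k}$ near $|\lambda|=\varphi(\alpha)$. First I would pass to components. Writing $c_{jk}(x,\lambda)=\sum_{m=1}^N (C_m)_{jk}(x)\lambda^{-m}$ and applying the triangle inequality in $L[\alpha,\infty)$, I obtain
$$\|c_{jk}(\cdot,\lambda)\|_{L[\alpha,\infty)} \le \sum_{m=1}^N |\lambda|^{-m}\,\|(C_m)_{jk}\|_{L[\alpha,\infty)}.$$
Setting $\delta(\alpha):=\max_{m=\overline{1,N}}\max_{j,k=\overline{1,n}}\|(C_m)_{jk}\|_{L[\alpha,\infty)}$ and taking the maximum over $j,k$, this yields $\|C(\cdot,\lambda)\|_{L[\alpha,\infty)}\le \delta(\alpha)\sum_{m=1}^N |\lambda|^{-m}$.

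The second step records that $\delta(\alpha)\to 0$ as $\alpha\to\infty$: each element $(C_m)_{jk}$ lies in $L[0,\infty)$, so the absolute continuity of the Lebesgue integral forces its tail norm $\|(C_m)_{jk}\|_{L[\alpha,\infty)}$ to vanish as $\alpha\to\infty$, and the maximum over the finitely many indices $j,k,m$ preserves this.

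The decisive step is the choice of $\varphi(\alpha)$. For $|\lambda|\ge\varphi(\alpha)$ one has $|\lambda|^{-m}\le\varphi(\alpha)^{-m}$, hence
$$\sup_{|\lambda|\ge\varphi(\alpha)}\|C(\cdot,\lambda)\|_{L[\alpha,\infty)} \le \delta(\alpha)\sum_{m=1}^N \varphi(\alpha)^{-m}.$$
I would set $\varphi(\alpha):=\big(\delta(\alpha)+(1+\alpha)^{-1}\big)^{1/(2N)}$, which is strictly positive and tends to $0$ as $\alpha\to\infty$. Since $\varphi(\alpha)\le 1$ for all large $\alpha$, the powers $\varphi(\alpha)^{-m}$ increase with $m$, so $\sum_{m=1}^N\varphi(\alpha)^{-m}\le N\varphi(\alpha)^{-N}$; the elementary estimate $\delta(\alpha)\varphi(\alpha)^{-N}\le \delta(\alpha)/\sqrt{\delta(\alpha)+(1+\alpha)^{-1}}\le \sqrt{\delta(\alpha)}$ (with both sides equal to $0$ when $\delta(\alpha)=0$) then gives $\sup_{|\lambda|\ge\varphi(\alpha)}\|C(\cdot,\lambda)\|_{L[\alpha,\infty)}\le N\sqrt{\delta(\alpha)}\to 0$, which is exactly~\eqref{C alpha}.

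The only real obstacle is this last balancing act: one cannot simply take $\varphi(\alpha)$ equal to $\delta(\alpha)$, since then $\varphi(\alpha)^{-N}\delta(\alpha)=\delta(\alpha)^{1-N}$ would blow up for $N\ge 2$; the exponent $1/(2N)$ is chosen so that $\varphi(\alpha)\to 0$ while every product $\varphi(\alpha)^{-m}\delta(\alpha)$ still vanishes. The additive term $(1+\alpha)^{-1}$ is a minor device ensuring $\varphi(\alpha)>0$ in the degenerate case where $\delta(\alpha)$ happens to be zero.
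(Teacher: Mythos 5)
Your proof is correct and follows essentially the same route as the paper: bound the tail norm by $\delta(\alpha)\sum_m\varphi(\alpha)^{-m}$ and choose $\varphi(\alpha)$ of order $\delta(\alpha)^{1/(2N)}$ so that $\varphi(\alpha)\to 0$ while $\delta(\alpha)\varphi(\alpha)^{-N}\lesssim\sqrt{\delta(\alpha)}\to 0$. The only cosmetic difference is the device guaranteeing $\varphi(\alpha)>0$ in the degenerate case (the paper takes $\varphi(\alpha)=\max\{\alpha^{-1},\sqrt[2N]{K_\alpha}\}$, you add $(1+\alpha)^{-1}$ under the root), which changes nothing of substance.
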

\begin{proof}
For any $\alpha > 0,$ put $\displaystyle K_\alpha = \sum_{k=1}^{N} \| C_k\|_{L[\alpha, \infty)}.$ It is clear that $K_\alpha \to 0$ as $\alpha \to \infty.$ Then, to satisfy~\eqref{C alpha}, it is sufficient to take $\varphi(\alpha) = \max\{\alpha^{-1}, \sqrt[2N]{K_\alpha}\}.$ Indeed, we have
$$\sup_{|\lambda|\ge\varphi(\alpha)}\| C\|_{L[\alpha, \infty)} \le \sum_{k=1}^{N} \sup_{|\lambda|\ge\varphi(\alpha)}|\lambda^{-k}| \| C_k\|_{L[\alpha, \infty)} \le \sum_{k=1}^{N} \varphi^{-k}(\alpha)\| C_k\|_{L[\alpha, \infty)}.$$
One can see that $\varphi(\alpha) \to 0$ as $\alpha \to \infty,$ and $\varphi(\alpha) < 1$ for a sufficiently large $\alpha.$ Then, for such $\alpha,$
$$\sup_{|\lambda|\ge\varphi(\alpha)}\| C\|_{L[\alpha, \infty)} \le \sum_{k=1}^{N} \varphi^{-N}(\alpha) \| C_k\|_{L[\alpha, \infty)} = K_\alpha \varphi^{-N}(\alpha).$$
If $K_\alpha = 0,$ then $\| C\|_{L[\alpha, \infty)} = 0 \le \sqrt{K_\alpha}$ for any $\lambda \in \mathbb C_0.$ If $K_\alpha > 0,$ then $\varphi^{N}(\alpha) \ge \sqrt{K_\alpha}$ and $\displaystyle\sup_{|\lambda|\ge\varphi(\alpha)}\| C\|_{L[\alpha, \infty)} \le K_\alpha / \sqrt{K_\alpha} = \sqrt{K_\alpha}.$ In any case, $\displaystyle\sup_{|\lambda|\ge\varphi(\alpha)}\| C\|_{L[\alpha, \infty)} \le \sqrt{K_\alpha} \to 0$ as $\alpha \to \infty.$
\end{proof}

Thus, we can apply Theorem~\ref{fss} to system~\eqref{sys coeff}. 
We have $b_1 = 1,$ $b_2 = -1,$ and there are two sectors: the right-half plane $\Gamma_1 = \{ \lambda \in \mathbb C \colon {\mathrm Re}\, \lambda >0 \}$ and  the left-half plane $\Gamma_2 = \{ \lambda \in \mathbb C \colon {\mathrm Re}\, \lambda <0 \}.$ For $\alpha \ge 0$ and $\lambda \in \overline{\Gamma_1^\alpha},$ we obtain that system~\eqref{sys coeff} has a FSS $Y_\alpha=[y_{jk}(x, \lambda)]_{j,k=1}^2,$ whose components are continuous functions of $\lambda \in \overline{\Gamma_1^\alpha}$ and analytic functions of $\lambda \in {\Gamma_1^\alpha}.$ The following asymptotics hold:
\begin{equation} \label{asym1}
y_{jk}(x, \lambda) = \exp\left(b_k\lambda(x - \alpha)\right) \left(\delta_{jk} \exp\left((-1)^k i \int_\alpha^x \frac{p_0(t)}{2} \, dt\right) + s_{jk}(x, \lambda)\right), 
\end{equation}
where $s_{jk}(x, \lambda) = o(1)$ as $\lambda \to \infty$ uniformly on $x \ge \alpha,$ and $j,k = 1, 2.$ 
Since  $\frac{ip_0}{2}, \sigma \in L_2[0, \infty)$ and $\| C \|_{L[0, \infty)} = O(\lambda^{-1}),$
applying Theorem~\ref{complement}, we obtain 
\begin{equation} \label{ss}
\sup_{x \ge \alpha} |s_{jk}(x, \lambda)| \in L_2(\Sigma),\end{equation} 
where $\Sigma \subset \overline{\Gamma_1^\alpha}$ is an arbitrary half-line. 
%By the construction, we also have
%\begin{equation} \label{mixed conditions} \left.\begin{array}{cc}
%y_{11}(\alpha, \lambda) = 1, & y_{21}(\alpha, \lambda) =0, \\[1mm]
% y_{12}(x, \lambda) = o\Big(\exp(-\lambda(x - \alpha))\Big), \; x \to \infty, & y_{22}(\alpha, \lambda) =1,
%\end{array}\right\}
%\end{equation}
%which follows from conditions~\eqref{init}.
%Analogously, one can obtain the FSS for $\lambda \in \overline{\Gamma_2^\alpha},$ swapping the rows and the columns of the matrices in~\eqref{sys coeff} to obtain the numeration $b_1 = -1,$ $b_2 = 1.$ 

Consider the matrix-function $V_\alpha = \Theta Y_\alpha,$ $V_\alpha =: [v_{jk}(x,\lambda)]_{j,k=1}^2,$ that is a FSS of system~\eqref{sys prev}.
Putting $u_1 := v_{11}$ and $u_2 := v_{12},$ from~\eqref{sys prev}, we have that $u_1$ and $u_2$ are solutions of equation~\eqref{pencil},
while $u^{[1]}_1 = \lambda v_{21}$ and $u^{[1]}_2 = \lambda v_{22}.$ 
For any $T>0$ and $k=1,2,$ we have $u_k(\cdot, \lambda), u_k^{[1]}(\cdot, \lambda) \in AC[0, T),$ since $v_{jk}(\cdot, \lambda)\in AC[0, T).$ 
The system $\{u_1, u_2 \}$ of solutions of equation~\eqref{pencil} is fundamental in the sense that the vectors 
$\Big[u_1, \; u^{[1]}_1\Big]^t$ and $\Big[u_2, \; u^{[1]}_2\Big]^t$ are linearly independent on $x \ge 0.$

Proceed to $z = -i \lambda,$ then $\lambda \in \overline{\Gamma_1}$ if and only if $z \in \overline{\mathbb C_-},$ where $\mathbb C_- = \{z \in \mathbb C \colon\mathrm{Im}\,z <0 \}.$ Using~\eqref{asym1} and~\eqref{ss}, we arrive at the following theorem.
 
\begin{theorem} \label{application}
For $\alpha \ge 0,$ there exists $\lambda_\alpha > 0$ such that for
$z \in \overline{\mathbb C_-^\alpha},$ $\mathbb C_-^\alpha := \{ z \in \mathbb C_- \colon |z| > \lambda_\alpha\},$ there exists a FSS of~\eqref{pencil} $\{ u_1(x, z), u_2(x, z) \}$ with the following properties.

\begin{enumerate}
\item
For $k=1,2,$ we have the representation
$$u_k(x, z) = \exp\big({(-1)^{k+1} z i(x - \alpha)}\big)\left(\exp\left({(-1)^{k} i \int_\alpha^x \frac{p_0(t)}{2} \, dt}\right) + \tilde s_{1k}(x, z)\right),$$
$$u^{[1]}_k(x, z) = (-1)^{k+1} zi \exp\big({(-1)^{k+1} z i(x - \alpha)}\big)\left(\exp\left({(-1)^{k} i \int_\alpha^x \frac{p_0(t)}{2} \, dt}\right)  + \tilde s_{2k}(x, z)\right),$$
where for $j=1,2,$ $\tilde s_{jk}(x, z) = o(1)$ uniformly on $x \ge \alpha$ as $z \to \infty.$  Moreover,  
$$\sup_{x \ge \alpha} |\tilde s_{jk}(x, \cdot)| \in L_2(\Sigma), \quad j,k =1,2,$$
where $\Sigma \subset \overline{\mathbb C_-^\alpha}$ is a half-line. 

\item  For a fixed $x  \ge 0,$ the functions $u_{jk}(x, z),$ $j,k=\overline{1, n},$ are continuous on $\overline{\mathbb C_-^\alpha}$ and  analytic in $\mathbb C_-^\alpha.$
\end{enumerate}
Moreover, $\lambda_\alpha \to 0$ as $\alpha \to \infty.$
\end{theorem}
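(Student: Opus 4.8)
The plan is to obtain Theorem~\ref{application} as a direct translation of Theorem~\ref{fss} and Theorem~\ref{complement}, which have already been applied to system~\eqref{sys coeff} to produce the FSS $Y_\alpha = [y_{jk}(x,\lambda)]_{j,k=1}^2$ with asymptotics~\eqref{asym1} and the square-summability~\eqref{ss}. The whole argument is a change of variables, carried out in two elementary stages: first passing from the solution vector $\mathrm{\bf y}$ of~\eqref{sys coeff} back to the solution vector $\mathrm{\bf v}$ of~\eqref{sys prev} via $\mathrm{\bf v} = \Theta \mathrm{\bf y}$, and then passing from the spectral parameter $\lambda$ to the original parameter $z = -i\lambda$.

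First I would recall the identifications already established in the text: $V_\alpha = \Theta Y_\alpha$ is a FSS of~\eqref{sys prev}, and setting $u_k := v_{1k}$, $u^{[1]}_k = \lambda v_{2k}$ for $k=1,2$ recovers solutions of~\eqref{pencil} together with their quasi-derivatives, with $\{u_1,u_2\}$ fundamental. Concretely, $v_{jk} = y_{1k} + (-1)^{j+1} y_{2k}$, so I would substitute the asymptotic expressions~\eqref{asym1} for $y_{1k}$ and $y_{2k}$. Because the leading term of $y_{jk}$ carries the Kronecker delta $\delta_{jk}$, the diagonal term $\exp\big((-1)^k i \int_\alpha^x \tfrac{p_0}{2}\big)$ survives in $u_k = v_{1k}$ with the correct sign, while the off-diagonal contribution folds into a new residual term. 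The residuals $\tilde s_{jk}$ in the statement are then explicit finite linear combinations of the $s_{jk}(x,\lambda)$ from~\eqref{asym1}, so both the uniform estimate $\tilde s_{jk}=o(1)$ as $z\to\infty$ and the $L_2(\Sigma)$ property follow immediately from~\eqref{ss} and the fact that a finite sum of $L_2$ functions is $L_2$.

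Next I would convert the spectral variable. Under $z = -i\lambda$ we have $\lambda = zi$, so $b_1 \lambda(x-\alpha) = zi(x-\alpha)$ and $b_2\lambda(x-\alpha) = -zi(x-\alpha)$, matching the exponentials $\exp\big((-1)^{k+1}zi(x-\alpha)\big)$ in the statement; the factor $(-1)^{k+1}zi$ in front of $u^{[1]}_k$ comes from $u^{[1]}_k = \lambda v_{2k}$ together with $\lambda = zi$ and the sign pattern in $v_{2k}$. Since $z=-i\lambda$ is an invertible affine map of $\mathbb{C}_0$ onto itself, it sends $\overline{\Gamma_1^\alpha}$ onto $\overline{\mathbb C_-^\alpha}$ (as already noted, $\lambda\in\overline{\Gamma_1}\Leftrightarrow z\in\overline{\mathbb C_-}$) and preserves continuity and analyticity; in particular it maps a half-line $\Sigma\subset\overline{\mathbb{C}_-^\alpha}$ to a half-line in $\overline{\Gamma_1^\alpha}$, so the $L_2$ statement transfers verbatim. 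The continuity on $\overline{\mathbb{C}_-^\alpha}$ and analyticity in $\mathbb{C}_-^\alpha$ of the $u_{jk}(x,z)$, and the convergence $\lambda_\alpha\to 0$ as $\alpha\to\infty$, are inherited directly from Theorem~\ref{fss}.

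Honestly, there is no serious obstacle here: the statement is a corollary, and the only point demanding any care is bookkeeping the signs and the Kronecker-delta structure when expanding $v_{jk}=y_{1k}+(-1)^{j+1}y_{2k}$, so that the surviving main terms and the exponential prefactors come out exactly as in the theorem. I would therefore keep the write-up short, stating that the result follows by applying the linear substitution $\mathrm{\bf v}=\Theta\mathrm{\bf y}$ and the variable change $z=-i\lambda$ to the conclusions of Theorems~\ref{fss} and~\ref{complement}, and verifying that the asymptotic formulae~\eqref{asym1} and the square-summability~\eqref{ss} are preserved under these transformations.
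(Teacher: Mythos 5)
Your proposal is correct and follows essentially the same route as the paper: the paper likewise derives Theorem~\ref{application} directly from the already-established asymptotics~\eqref{asym1} and square-summability~\eqref{ss} for $Y_\alpha$, by passing to $V_\alpha=\Theta Y_\alpha$, setting $u_k=v_{1k}$, $u^{[1]}_k=\lambda v_{2k}$, and substituting $z=-i\lambda$. Your bookkeeping of the signs and the Kronecker-delta structure (giving $\tilde s_{jk}$ as finite linear combinations of the $s_{jk}$) is exactly the computation the paper leaves implicit.
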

An analogous theorem can be obtained for the values $z \in \overline{\mathbb C_+},$ $\mathbb C_+ := \{z\in \mathbb C \colon\mathrm{Im}\,z >0 \}.$ However, the difference between $z \in \mathbb C_+$ and $z \in \mathbb C_-$ is insignificant, since one case reduces to the other case by the substitution $z = -\tilde z.$ 

If $p_0 = 0,$ Theorem~\ref{application} gives the result for the Sturm--Liouville equation on the half-line with a distribution potential. It agrees with the formulae for FSS of $n$-th order equations given in~\cite{bond3,stud} in the particular case $n=2.$ We can also apply Theorem~\ref{application} to equations on a finite interval $x \in [0, T],$ putting $\sigma(x) = 0$ for $x > T.$ In this case, we obtain the FSS of the Sturm--Liouville equation on a finite interval similar to the ones provided in~\cite[Corollary~2]{sav}. 

 \medskip
\noindent{\bf Funding.}
This work was supported by the Russian Science Foundation (project no.~21-71-10001), \url{https://rscf.ru/en/project/21-71-10001/}.

\appendix

\section{Appendix: proof of Theorem~\ref{theta theorem}} \label{appendix A}

First, we introduce auxiliary values and prove necessary theorems for them. Let $f \in L[0, \infty)$ be some fixed function. Denote $\mathbb C_+ := \{\lambda \in \mathbb C \colon \mathrm{Im}\, \lambda > 0\}$ and
\begin{equation}\psi(s, x, \lambda) := 
\int_{\min\{s, x\}}^{\max\{s, x\}} f(t) e^{i \lambda |x - t|} \, dt, 
 \quad \Psi(\lambda) := \sup_{x,s \ge 0}|\psi(s, x, \lambda)|,\quad \lambda \in \overline{\mathbb C_+}. \label{psi}\end{equation}
 Note that the integrals in~\eqref{psi} are well-defined, since $\mathrm{Re}\,\big(i \lambda |x - t|\big) \le 0$ for $ \lambda \in \overline{\mathbb C_+}.$
 Obviously, $\Psi(\lambda)$ has real non-negative values.
Its definition is similar to the one of $\theta_\alpha(\lambda)$ and, in fact, $\Psi(\lambda)$ will participate in estimates for $\theta_\alpha(\lambda).$ Let us obtain necessary properties of  $\Psi(\lambda).$

\begin{prop} \label{Psi}
The function $\Psi(\lambda)$ is uniformly continuous on $\overline{\mathbb C_+}$ and $\Psi(\lambda) \to 0$ as $\lambda \to \infty.$
\end{prop}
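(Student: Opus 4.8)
The plan is to treat the two assertions separately. The decay $\Psi(\lambda)\to0$ as $\lambda\to\infty$ will follow by exactly the device used in Lemma~\ref{theta lemma}. Given $\varepsilon>0$, I first choose $T$ with $\|f\|_{L[T,\infty)}<\varepsilon/2$; since $|e^{i\lambda|x-t|}|\le1$ on $\overline{\mathbb C_+}$, the part of the integral in~\eqref{psi} over $\{t\ge T\}$ is bounded by $\varepsilon/2$ uniformly in $s,x$. On the remaining interval, contained in $[0,T]$, I approximate $f$ by a function $\tilde f\in C^{(1)}[0,T]$ with $\|f-\tilde f\|_{L[0,T]}<\varepsilon/4$; the exponent $i\lambda|x-t|$ is affine in $t$ on each of the two regimes of~\eqref{psi} (with $t$-derivative $\pm i\lambda$), so an integration by parts in the $\tilde f$-term produces a factor $|\lambda|^{-1}$ and hence a bound $\le\varepsilon/4$ once $|\lambda|$ is large. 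As all three estimates are uniform in $s,x\ge0$, this gives $\Psi(\lambda)<\varepsilon$ for $|\lambda|$ large.

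For the uniform continuity I would first reduce to a bounded set of parameters: by the decay just proved, outside a disc $|\lambda|\le M$ the value $\Psi$ is already within $\varepsilon$ of $0$, so it suffices to establish that $\Psi$ is continuous at each point of $\overline{\mathbb C_+}$ and then glue the resulting (compact, hence uniform) modulus of continuity on $\{|\lambda|\le M+1\}\cap\overline{\mathbb C_+}$ with the smallness of $\Psi$ at infinity. Pointwise lower semicontinuity of $\Psi$ is immediate, since for fixed $s,x$ the map $\lambda\mapsto\psi(s,x,\lambda)$ is continuous (dominated convergence on a bounded $t$-interval) and $\Psi$ is a supremum. The real work is upper semicontinuity, i.e.\ ruling out that the supremum migrates to configurations $s,x$ with $|x-s|\to\infty$ as $\lambda$ moves. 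To organise this I split the supremum according to the two regimes in~\eqref{psi}: the left-anchored part $\Phi^-(\lambda)=\sup_{0\le x\le s}|\int_x^s f(t)e^{i\lambda(t-x)}\,dt|$ and the right-anchored part $\Phi^+(\lambda)=\sup_{0\le s\le x}|\int_s^x f(t)e^{i\lambda(x-t)}\,dt|$, so that $\Psi=\max\{\Phi^-,\Phi^+\}$.

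For $\Phi^-$ the portion of the integral lying at distance $>R$ from the anchor $x$ corresponds to arguments $t\ge x+R\ge R$, i.e.\ a genuine tail of $f$; combining the elementary bound $|e^{i\lambda_1 u}-e^{i\lambda_2 u}|\le\min\{2,\,u|\lambda_1-\lambda_2|\}$ (valid for $u\ge0$ and $\lambda_1,\lambda_2\in\overline{\mathbb C_+}$ by convexity of the half-plane) on $u\le R$ with the tail bound on $u>R$ yields $|\Phi^-(\lambda_1)-\Phi^-(\lambda_2)|\le R|\lambda_1-\lambda_2|\,\|f\|_{L[0,\infty)}+2\|f\|_{L[R,\infty)}$, which is uniformly small. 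Thus $\Phi^-$ is uniformly continuous on all of $\overline{\mathbb C_+}$ with no boundary subtlety.

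The main obstacle is $\Phi^+$: here ``far from the anchor'' means small $t$, so the distant part of the integral can capture the whole mass of $f$ and the naive difference estimate fails (indeed $\sup_{s,x}|\psi(s,x,\lambda_1)-\psi(s,x,\lambda_2)|$ need not tend to $0$). In the open half-plane $\mathrm{Im}\,\lambda>0$ this is harmless, because the factor $e^{-(x-t)\mathrm{Im}\,\lambda}$ confines the effective integration to a neighbourhood of $t=x$ and forces the supremum onto a bounded set of $(s,x)$, where continuity follows as in the bounded case. On the real boundary the damping disappears, and I would instead write, for real $\lambda$, $\int_s^x f(t)e^{i\lambda(x-t)}\,dt=e^{i\lambda x}\big(F_s(\lambda)-F_x(\lambda)\big)$ with $F_a(\lambda):=\int_a^\infty f(t)e^{-i\lambda t}\,dt$, so that $\Phi^+(\lambda)=\sup_{0\le s\le x}|F_s(\lambda)-F_x(\lambda)|$. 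The family $\{F_a\}_{a\ge0}$ is uniformly equicontinuous in $\lambda$ — its modulus of continuity $R|\lambda_1-\lambda_2|\,\|f\|_{L[0,\infty)}+2\|f\|_{L[R,\infty)}$ again involves only a tail of $f$ and is independent of $a$ — whence $\Phi^+$ is continuous along $\mathbb R$. The delicate point, which is exactly where the half-line differs from the finite-interval setting of~\cite{sav sad}, is to match these two regimes at a boundary point $\lambda_0\in\mathbb R$: one must show that the supremum cannot escape to $x\to\infty$ as $\mathrm{Im}\,\lambda\to0^+$, which I expect to handle by combining the equicontinuity of $\{F_a\}$ with the dominated contribution of the tail $\|f\|_{L[x,\infty)}$ for large $x$. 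I anticipate this gluing at the real axis to be the hardest step.
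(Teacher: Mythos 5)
Your argument for $\Psi(\lambda)\to 0$ is fine and matches the paper's (which simply reuses the technique of Lemma~\ref{theta lemma}), and your treatment of the left-anchored part $\Phi^-$ is correct. But the continuity proof has a genuine gap, and it is exactly the step you flag at the end as ``the hardest'' and only ``expect to handle'': controlling the right-anchored supremum $\Phi^+$ uniformly as one approaches the real axis. Your interior argument degenerates as $\mathrm{Im}\,\lambda\to 0^+$ (the radius $R$ needed to confine the integration near $t=x$ scales like $1/\mathrm{Im}\,\lambda$), and your boundary representation via $F_a(\lambda)=\int_a^\infty f(t)e^{-i\lambda t}\,dt$ is only available for real $\lambda$, so nothing in the proposal actually matches the two regimes. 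As written, the proof is incomplete.

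The missing idea is a one-line truncation that works uniformly on all of $\overline{\mathbb C_+}$ and makes the interior/boundary dichotomy unnecessary. For $s\le T\le x$ write
\begin{equation*}
\int_s^x f(t)\,e^{i\lambda(x-t)}\,dt=\int_{T}^x f(t)\,e^{i\lambda(x-t)}\,dt+e^{i\lambda(x-T)}\int_s^{T} f(t)\,e^{i\lambda(T-t)}\,dt,
\end{equation*}
and observe that $\bigl|e^{i\lambda(x-T)}\bigr|\le 1$ for every $\lambda\in\overline{\mathbb C_+}$. Hence $|\psi(s,x,\lambda)|\le \|f\|_{L[T,\infty)}+|\psi(\min\{s,T\},T,\lambda)|$: the distant anchor $x>T$ is replaced by $T$ at the cost of a tail of $f$, uniformly in $\lambda$ on the closed half-plane. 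This is precisely inequality~\eqref{vartheta} in the paper's proof. With it, for each $z$ a near-maximizing pair $(s_z,x_z)$ may be taken in $[0,T]^2$, and then the elementary bound $\big|e^{i\lambda\xi}-e^{iz\xi}\big|\le \xi|\lambda-z|$ for $\xi\in[0,T]$ (the same convexity estimate you already use for $\Phi^-$) gives $\Psi(\lambda)-\Psi(z)\ge-\varepsilon$ for $|\lambda-z|<\delta(\varepsilon,T)$; swapping $\lambda$ and $z$ yields uniform continuity directly, with no pointwise-continuity-plus-compactness gluing and no separate treatment of the real axis. I recommend replacing your $\Phi^+$ discussion by this truncation.
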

\begin{proof}
First, note that for any $\zeta > 0,$ there exists $T > 0$ such that 
\begin{equation} \label{vartheta}
|\psi(s, x, \lambda)| \le \zeta + |\psi(\min\{s, T\}, \min\{ x, T\}, \lambda)|, \quad \lambda \in \overline{\mathbb C_+}, \; s, x \ge 0.
\end{equation}
Indeed, it is sufficient to take $T$ such that $\| f \|_{L[T, \infty)} \le \zeta.$ 
To prove~\eqref{vartheta}, for definiteness, we assume $x > s;$ the case $s \ge x$ can be considered analogously.
If $x \le T,$ then~\eqref{vartheta} is obvious. 
If $x > T,$ then we have
$$|\psi(s, x, \lambda)| = \left|\int_{s}^x f(t) e^{i \lambda (x - t)}\,dt \right| \le \left|\int_{\max\{T, s\}}^x f(t) e^{i \lambda (x - t)}\,dt \right| + \big|e^{i \lambda (x - T)}\big|\left|\int_{s}^T f(t) e^{i \lambda (T - t)} \, dt\right|,$$
where $\big|e^{i \lambda (x - T)}\big| \le 1$ and the second summand is absent if $s> T.$ Then, using the inequality $\| f \|_{L[T, \infty)} \le \zeta,$ we obtain $|\psi(s, x, \lambda)| \le \zeta + |\psi(\min\{s, T\}, T, \lambda)|,$ and~\eqref{vartheta} is proved. 

Now, we prove the continuity of $\Psi(\lambda).$ Let $\varepsilon > 0$ be fixed and $x_z, s_z > 0$ be such that
\begin{equation} \label{x_z}
|\psi(s_z, x_z, z)| +\frac\varepsilon2 \ge \Psi(z), \quad z \in \overline{\mathbb C_+}.
\end{equation}  
Using~\eqref{vartheta}, we can always choose $x_z, s_z \in [0, T],$ where $T>0$ depends on $\varepsilon.$ Let $\delta > 0$ be such that 
\begin{equation} \label{uniform e}
\big|e^{\lambda i \xi} - e^{z i \xi}\big| \le \frac{\varepsilon}{2\| f\|_{L[0, \infty)}+1}, \quad \xi \in [0, T],
\end{equation}
as soon as $\lambda, z \in \overline{\mathbb C_+}$ and $|\lambda -z| < \delta.$  
Then, for such $\lambda$ and $z,$ we have
$$\Psi(\lambda) - \Psi(z) \overset{(\ref{x_z})}{\ge}  |\psi(s_z, x_z, \lambda)|
- |\psi(s_z, x_z, z)| - \frac\varepsilon2 \ge - \frac\varepsilon2 - \int \left|e^{\lambda i |t - x_z|} - e^{z i |t - x_z|}\right| |f(t)| \, dt,$$
wherein the integral is taken on a subinterval of $[0, T],$ and $|t - x_z| \in [0, T].$ Using~\eqref{uniform e}, we arrive at $\Psi(\lambda) - \Psi(z) \ge -\varepsilon.$ 
Note that $\lambda$ and $z$ can be swapped. Then, for any $\varepsilon > 0$ there exists $\delta > 0$ such that $|\Psi(\lambda) - \Psi(z)| \le \varepsilon$ for $|\lambda -z| < \delta,$ and the continuity is proved.

The statement $\Psi(\lambda) \to 0$ as $\lambda \to \infty$ is proved analogously to that $\theta_\alpha(\lambda) \to 0$ in Lemma~\ref{theta lemma}. 
\end{proof}

\begin{theoremA} \label{A} If $f \in L_2[0, \infty),$ 
 then $\Psi \in L_2(\mathbb R).$
 \end{theoremA}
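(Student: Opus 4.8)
The plan is to reduce the evaluation of $\Psi$ on the real axis to the action of the Carleson maximal operator on $f$, and then to invoke the Carleson--Hunt theorem. First I would compute $\psi(s,x,\lambda)$ explicitly in the two cases entering~\eqref{psi}. For real $\lambda$ and $x>s\ge0$ one has $|x-t|=x-t$ on $[s,x]$, so $\psi(s,x,\lambda)=e^{i\lambda x}\int_s^x f(t)e^{-i\lambda t}\,dt$; since $|e^{i\lambda x}|=1$, this gives $|\psi(s,x,\lambda)|=\big|\int_s^x f(t)e^{-i\lambda t}\,dt\big|$. Symmetrically, for $s>x\ge0$ one obtains $|\psi(s,x,\lambda)|=\big|\int_x^s f(t)e^{i\lambda t}\,dt\big|$, while $\psi\equiv0$ when $s=x$. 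Taking the supremum over $s,x\ge0$ therefore yields, for $\lambda\in\mathbb R$,
$$\Psi(\lambda)=\max\Bigg\{\sup_{0\le a<b}\Big|\int_a^b f(t)e^{-i\lambda t}\,dt\Big|,\ \sup_{0\le a<b}\Big|\int_a^b f(t)e^{i\lambda t}\,dt\Big|\Bigg\}.$$

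Next I would extend $f$ by zero to the whole line, so that $f\in L_2(\mathbb R)$ with unchanged norm, and introduce the maximal partial Fourier integral $\Phi(\lambda):=\sup_{a<b}\big|\int_a^b f(t)e^{-i\lambda t}\,dt\big|$. Since $f$ vanishes on $(-\infty,0)$, restricting the endpoints to $a\ge0$ does not change the supremum, so the first bracketed term above is at most $\Phi(\lambda)$; the second equals $\Phi(-\lambda)$ by the substitution $\lambda\mapsto-\lambda$ in the definition of $\Phi$. Consequently $\Psi(\lambda)\le\Phi(\lambda)+\Phi(-\lambda)$ for every $\lambda\in\mathbb R$, and it suffices to prove $\Phi\in L_2(\mathbb R)$: then, by invariance of the $L_2$-norm under $\lambda\mapsto-\lambda$, we get $\|\Psi\|_{L_2(\mathbb R)}\le2\|\Phi\|_{L_2(\mathbb R)}<\infty$. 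Measurability of $\Psi$ is automatic, since $\Psi$ is continuous on $\overline{\mathbb C_+}$ by Proposition~\ref{Psi}.

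The operator $f\mapsto\Phi$ is exactly the Carleson maximal operator, written with truncations of the spatial integral instead of the frequency integral; the two forms are exchanged by the Fourier transform and both are governed by the Carleson--Hunt theorem. Thus the decisive step is to quote the $L_2$-boundedness $\|\Phi\|_{L_2(\mathbb R)}\le C\|f\|_{L_2(\mathbb R)}$, valid for all $f\in L_2(\mathbb R)$, see~\cite{grafakos}. Combined with the reduction above, this gives $\|\Psi\|_{L_2(\mathbb R)}\le2C\|f\|_{L_2[0,\infty)}$, which is the assertion.

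The genuinely hard ingredient is the Carleson--Hunt theorem itself; everything else is elementary bookkeeping. If one prefers to cite the theorem in its one-sided truncation form $\sup_N\big|\int_{-\infty}^N f(t)e^{-i\lambda t}\,dt\big|$, the passage to arbitrary endpoints costs only a harmless factor $2$ via $\int_a^b=\int_{-\infty}^b-\int_{-\infty}^a$. The only other points deserving attention are the modulus computation $|e^{\pm i\lambda x}|=1$, which is precisely why real $\lambda$ make the exponential weights unimodular and the reduction possible, and the symmetry reducing the $e^{i\lambda t}$ supremum to $\Phi(-\lambda)$.
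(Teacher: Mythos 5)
Your proof is correct and follows essentially the same route as the paper: pull out the unimodular factor $e^{\pm i\lambda x}$ for real $\lambda$, reduce the supremum over intervals $[a,b]\subset[0,\infty)$ to one-sided truncations via the triangle inequality (at the cost of a factor $2$), and invoke the Carleson--Hunt theorem for the maximal partial Fourier integral of the zero-extended $f\in L_2(\mathbb R)$. The paper phrases the last step by applying the Carleson operator to $\check f$ so that $(\check f)\hat{\vphantom{1}}=f$, which is exactly your ``exchange of the two forms by the Fourier transform.''
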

\begin{proof}
Consider the Carleson operator ${\cal C},$ that acts on $w \in L_2(\mathbb R)$ as follows:
$${\cal C}w(\sigma) := \displaystyle\mathop{\sup}_{N > 0} \Big| \int_{-N}^N \hat{w}(s) e^{2 \pi i \sigma s}\, ds \Big|, \quad \sigma \in \mathbb R,$$
where $\hat w$ is the Fourier transform of $w:$ 
$$\displaystyle \hat w(s) = \mathop{\int}_{\mathbb R} w(x) e^{-2 \pi i s x} \,dx, \quad s \in \mathbb R.$$
 It is known that ${\cal C}$ is a bounded operator in $L_2(\mathbb R)$ (see, e.g., Theorem~6.2.1 in~\cite{grafakos}).

Put $f(t) = 0$ for $t < 0.$ There is defined the inverse Fourier transform of $f:$ 
$\check f(t) = \hat f(-t),$ $t\in \mathbb R,$ and $\check{f} \in L_2(\mathbb R).$
Since $({\check{f}})\hat{\vphantom{1mm}}  = f,$ we have
\begin{equation} \label{Lr}
\mathop{\sup}_{N > 0} \left| \int_{0}^N f(s) e^{2 \pi i \sigma s}\, ds \right| = {\cal C}\check{f}(\sigma) \in L_2(\mathbb R).
\end{equation}

Note that $\Psi(\lambda) = \max\{\Psi_1(\lambda),\Psi_2(\lambda)\},$ where 
$$\Psi_1(\lambda) := \sup_{x \ge s \ge 0}\left|\int_{s}^x f(t) e^{i \lambda (x -t)}\,dt\right|, \quad \Psi_2(\lambda) := \sup_{s \ge x \ge 0}\left|\int_{x}^s f(t) e^{i \lambda (t -x)}\,dt\right|.$$ 
Then, it is sufficient to obtain $\Psi_1, \Psi_2 \in L_2(\mathbb R).$ 
Consider, for definiteness, $\Psi_1.$ 
For $\tau \in \mathbb R,$ we have
$$ \Psi_1(-\tau) = \sup_{x \ge s \ge 0}\left|\int_{s}^x f(t) e^{i \tau t}\,dt\right|\big|e^{-i \tau x}\big|=\sup_{x \ge s \ge 0}\left|\int_{s}^x f(t) e^{i \tau t}\,dt\right|.$$
Applying the inequality $|\int_s^x| \le |\int_0^x|+|\int_0^s|,$ we arrive at
$$\Psi_1(-\tau) \le 2 \sup_{N>0} \left|\int_{0}^N f(t) e^{i \tau t}\,dt\right|=2 {\cal C}\check{f}\left(\frac{\tau}{2\pi}\right),$$
which along with~\eqref{Lr} give us the needed statement. For $\Psi_2,$ the computations are analogous.
\end{proof}

Further, we need the following definition.

\begin{definition}
A measure $\mu \ge 0$ in $\mathbb C_+$ is called Carleson measure if 
$$\sup_{x \in \mathbb R, \; y > 0} y^{-1} \mu(S_{x, y}) < \infty, \quad S_{x,y} := \big\{ \lambda \in \mathbb{C} \colon \mathrm{Re} \, \lambda \in (x, x+y), \;  \mathrm{Im} \, \lambda \in (0, y)\big\}.$$ 

In particular, if $\mu$ is the standard Lebesgue measure on a half-line $\tilde \Sigma \subset \mathbb C_+$ that equals $0$ in the other part of the plane $\mathbb C_+,$ then $\mu$ is a Carleson measure.
\end{definition}

For a measure $\mu \ge 0$ in $\mathbb C_+,$ denote by $L_2(\mathbb C_+, \mu)$ the set of measurable functions $h(z),$ $z \in \mathbb C_+,$ for which the Lebesgue integral $\int_{\mathbb C_+} |h(z)|^2 \, d\mu$ is finite.
%Further, we need the following theorems.

\begin{theoremA} \label{B}
Under the conditions of Theorem~\ref{A},
for any Carleson measure $\mu,$ $
\Psi \in {L_2(\mathbb C_+, \mu)}.$
\end{theoremA}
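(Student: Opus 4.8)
The plan is to bound $\Psi$ pointwise in $\mathbb C_+$ by the Poisson extension of its boundary trace $\Psi|_{\mathbb R}$, and then to invoke the Carleson embedding theorem together with the fact, established in Theorem~\ref{A}, that $\Psi|_{\mathbb R}\in L_2(\mathbb R)$.

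First I would check that $\Psi$ is subharmonic on $\mathbb C_+$. For fixed $x\ge s\ge 0$ the function $\lambda\mapsto\int_s^x f(t)e^{i\lambda(x-t)}\,dt$ is holomorphic on $\mathbb C_+$, so its modulus is subharmonic there; since $\Psi_1$ is the supremum of such moduli, is bounded by $\|f\|_{L[0,\infty)}$, and is continuous (by the same argument used for $\Psi$ in Proposition~\ref{Psi}), it is subharmonic, and the same holds for $\Psi_2$. Hence $\Psi=\max\{\Psi_1,\Psi_2\}$ is bounded, continuous on $\overline{\mathbb C_+}$, and subharmonic on $\mathbb C_+$.

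Next, let $u$ be the Poisson integral of the boundary function $\Psi|_{\mathbb R}$. Because $\Psi|_{\mathbb R}$ is bounded and continuous, $u$ is a bounded harmonic function on $\mathbb C_+$ that extends continuously to $\overline{\mathbb C_+}$ with $u|_{\mathbb R}=\Psi|_{\mathbb R}$. I claim $\Psi\le u$ on $\mathbb C_+$. The difference $w:=\Psi-u$ is subharmonic, bounded above, continuous up to $\mathbb R$, and satisfies $w|_{\mathbb R}=0$; by the Phragm\'en--Lindel\"of maximum principle for the half-plane, a subharmonic function that is bounded above and has non-positive boundary values is non-positive throughout, so $w\le 0$. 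With the domination in hand, I would conclude $\int_{\mathbb C_+}\Psi^2\,d\mu\le\int_{\mathbb C_+}u^2\,d\mu$, and since $\mu$ is a Carleson measure, the Carleson embedding theorem (see~\cite{garnett}) gives $\int_{\mathbb C_+}u^2\,d\mu\le C(\mu)\,\|\Psi|_{\mathbb R}\|_{L_2(\mathbb R)}^2$, which is finite by Theorem~\ref{A}. This proves $\Psi\in L_2(\mathbb C_+,\mu)$.

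I expect the main obstacle to be the domination step $\Psi\le u$: one must verify that $\Psi$ is genuinely subharmonic (continuity of the suprema $\Psi_1,\Psi_2$ is exactly what makes a supremum of subharmonic functions subharmonic) and that the maximum principle is applicable on the \emph{unbounded} half-plane, where the uniform boundedness of both $\Psi$ and $u$ is the hypothesis controlling the behaviour at infinity. A secondary point to get right is that the Carleson embedding is used in its Poisson-integral form for $L_2$ boundary data, rather than in the Hardy-space form for holomorphic functions.
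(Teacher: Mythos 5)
Your proof is correct and follows essentially the same route as the paper's: subharmonicity of $\Psi$ on $\mathbb C_+$, domination by the Poisson integral of its boundary trace via a maximum principle on the half-plane, and the Carleson embedding theorem (the paper's Theorem~\ref{C}) applied to that Poisson integral together with $\Psi|_{\mathbb R}\in L_2(\mathbb R)$ from Theorem~\ref{A}. The only cosmetic difference is the treatment of the point at infinity: the paper uses $\Psi(z)\to 0$ as $z\to\infty$ (Proposition~\ref{Psi}) to apply its stated maximum principle on the unbounded domain, whereas you invoke a Phragm\'en--Lindel\"of version resting on boundedness above; both are valid.
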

To prove Theorem~\ref{B}, we need several auxiliary statements.

\begin{theoremA}[Theorem~5.6 in \cite{garnett}] \label{C}
 Let $g$ be an arbitrary function from $L_2(\mathbb R)$ and  let 
$\Phi(z),$ $z \in \mathbb C_+,$ be its Poisson integral:  
\begin{equation} \label{poisson}
\Phi(x + i y) = \mathop{\int}_{\mathbb R} \frac{y}{\pi([x-t]^2 + y^2)} g(t) \, dt, \quad y > 0,\;x \in \mathbb R.
\end{equation}
Then, for any Carleson measure $\mu,$ we have  $\Phi \in L_2(\mathbb C_+, \mu).$ 
\end{theoremA}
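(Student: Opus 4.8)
The plan is to prove the quantitative Carleson embedding inequality
$$\int_{\mathbb C_+}|\Phi|^2\,d\mu \le C\,A\,\|g\|_{L_2(\mathbb R)}^2,$$
where $A:=\sup_{x\in\mathbb R,\,y>0} y^{-1}\mu(S_{x,y})$ is the Carleson constant of $\mu$ and $C$ is an absolute constant; since $g\in L_2(\mathbb R)$, finiteness of the right-hand side yields $\Phi\in L_2(\mathbb C_+,\mu)$. The argument rests on three ingredients: a pointwise majorization of $\Phi$ by the Hardy--Littlewood maximal function of $g$, a geometric ``tent'' estimate converting the Carleson condition into a bound for $\mu$ on the superlevel sets of $\Phi$, and the layer-cake representation of the integral.

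First I would introduce the nontangential maximal function $N\Phi(t):=\sup\{|\Phi(x+iy)| : |x-t|<y\}$ and establish the pointwise bound $N\Phi(t)\le 3\,(Mg)(t)$, where $M$ is the Hardy--Littlewood maximal operator. This uses only that $\Phi(\cdot+iy)=P_y*g$ with Poisson kernel $P_y(s)=\tfrac{y}{\pi(s^2+y^2)}$: for $|x-t|<y$ one has $(t-s)^2+y^2\le 3\big((x-s)^2+y^2\big)$, whence $P_y(x-s)\le 3\,P_y(t-s)$; since $P_y$ is a nonnegative radially decreasing approximate identity with $\int_{\mathbb R}P_y=1$, the standard majorization of such convolutions gives $|\Phi(x+iy)|\le 3\,(Mg)(t)$, and taking the supremum over the cone proves the claim. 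Combined with the Hardy--Littlewood maximal theorem, $\|Mg\|_{L_2(\mathbb R)}\le C\|g\|_{L_2(\mathbb R)}$, this bounds $N\Phi$ in $L_2(\mathbb R)$.

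The main step is Carleson's covering lemma: for every $\lambda>0$,
$$\mu\big(\{z\in\mathbb C_+ : |\Phi(z)|>\lambda\}\big)\le A\,\big|\{t\in\mathbb R : N\Phi(t)>\lambda\}\big|.$$
I would set $O_\lambda:=\{t:N\Phi(t)>\lambda\}$, an open subset of $\mathbb R$, and note that if $|\Phi(x+iy)|>\lambda$ then every $t$ with $|x-t|<y$ satisfies $N\Phi(t)>\lambda$, so the whole interval $(x-y,x+y)$ lies in $O_\lambda$; thus the superlevel set is contained in the tent $\widehat{O}_\lambda:=\{x+iy:(x-y,x+y)\subset O_\lambda\}$. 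Writing $O_\lambda=\bigcup_j I_j$ as a disjoint union of its open component intervals $I_j$, the tent over $O_\lambda$ splits into the tents over the $I_j$, and each tent $\widehat{I_j}$ is contained in the Carleson box with base $I_j$ and height $|I_j|$ (a box of the form $S_{x,y}$ with $y=|I_j|$), so $\mu(\widehat{I_j})\le A|I_j|$. Summing gives $\mu(\widehat{O}_\lambda)\le A\sum_j|I_j|=A|O_\lambda|$, which is the lemma.

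Finally I would assemble the pieces through the layer-cake formula:
$$\int_{\mathbb C_+}|\Phi|^2\,d\mu = 2\int_0^\infty \lambda\,\mu\big(\{|\Phi|>\lambda\}\big)\,d\lambda \le 2A\int_0^\infty \lambda\,\big|\{N\Phi>\lambda\}\big|\,d\lambda = A\,\|N\Phi\|_{L_2(\mathbb R)}^2 \le C\,A\,\|g\|_{L_2(\mathbb R)}^2<\infty.$$
I expect the geometric tent estimate to be the main obstacle: one must justify that the superlevel set of $\Phi$ sits inside the tent over $O_\lambda$ and, crucially, that the tent over each component interval is swallowed by a single Carleson box of the same base, so that the Carleson condition can be applied componentwise. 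The maximal-function comparison and the distributional computation are then routine once this nontangential framework is in place.
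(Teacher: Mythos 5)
Your proof is correct, and it is essentially the argument behind the result the paper itself does not prove but merely quotes from Garnett (Theorem~5.6 there): the nontangential maximal function controlled by the Hardy--Littlewood maximal operator, the tent/covering lemma $\mu\big(\{|\Phi|>\lambda\}\big)\le A\,\big|\{N\Phi>\lambda\}\big|$ obtained by decomposing the open set $\{N\Phi>\lambda\}$ into component intervals and swallowing each tent in a Carleson box, and the layer-cake assembly. All steps check out, including the only delicate point (the superlevel set of $\Phi$ lying in the tent, and each component tent fitting in a single box $S_{a_j,|I_j|}$), so there is nothing to add beyond noting that components of $\{N\Phi>\lambda\}$ have finite length because $\|N\Phi\|_{L_2(\mathbb R)}<\infty$, which your $L_2$ bound already supplies.
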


\begin{theoremA}[Corollary~3.2 in \cite{garnett}] \label{D} Let $g$ be a bounded and uniformly continuous function on $\mathbb R.$ Consider the function
$\Phi$ defined by~\eqref{poisson} and extend it on $x \in \mathbb R$ by the formula $\Phi(x) = g(x).$
Then, the function $\Phi$ is harmonic in $\mathbb C_+$ and continuous on $\overline{\mathbb C_+}.$
\end{theoremA}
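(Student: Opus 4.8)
The plan is to prove the two assertions separately: harmonicity of $\Phi$ in the open half-plane $\mathbb{C}_+,$ and continuity of the extended function up to the boundary. Throughout I write the Poisson kernel as $P_y(u) = \frac{y}{\pi(u^2+y^2)},$ so that $\Phi(x+iy) = \int_{\mathbb R} P_y(x-t)\, g(t)\,dt$ in~\eqref{poisson}, and I use its three standard features: it is positive, it has total mass $\int_{\mathbb R} P_y(u)\,du = 1$ for every $y>0,$ and for every $\delta>0$ the tail satisfies $\int_{|u|\ge\delta} P_y(u)\,du = \frac{2}{\pi}\arctan\frac{y}{\delta}\to 0$ as $y\to 0^+.$ Boundedness of $g$ guarantees that the defining integral converges absolutely, since $P_y(x-t)$ decays like $t^{-2}$ at infinity.

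For harmonicity, I would first observe that for each fixed $t\in\mathbb R$ the function $K_t(z) := \frac1\pi\,\mathrm{Im}\,\frac{1}{t-z}$ (which equals $P_y(x-t)$ for $z=x+iy$) is harmonic in $z\in\mathbb{C}_+,$ being the imaginary part of $z\mapsto\frac{1}{\pi(t-z)},$ holomorphic on $\mathbb{C}_+$ because $t-z$ never vanishes there. It then suffices to pass harmonicity through the integral. I would note first that $\Phi$ is continuous on $\mathbb{C}_+$: on any compact $K\subset\mathbb{C}_+$ one has $y\ge\eta>0,$ so the kernel and its dependence on $z$ are dominated uniformly on $K$ by an integrable majorant, and dominated convergence gives continuity. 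Then, fixing a closed disc $\overline{D(z_0,r)}\subset\mathbb{C}_+$ and applying Fubini (legitimate since $\|g\|_\infty<\infty$ and the kernel is integrable), I would compute the circular average $\frac{1}{2\pi}\int_0^{2\pi}\Phi(z_0+re^{i\vartheta})\,d\vartheta = \int_{\mathbb R}\left(\frac{1}{2\pi}\int_0^{2\pi}K_t(z_0+re^{i\vartheta})\,d\vartheta\right)g(t)\,dt = \int_{\mathbb R}K_t(z_0)\,g(t)\,dt = \Phi(z_0),$ where the inner average equals $K_t(z_0)$ by the mean value property of the harmonic kernel. A continuous function obeying the mean value property on every small circle is harmonic, which yields the first assertion. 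Equivalently, one could differentiate under the integral sign twice, the differentiation being justified by the same local domination, to obtain $\Delta\Phi=0$ directly.

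For continuity up to the boundary, the point is to show $\Phi(z)\to g(x_0)$ as $z=x+iy\to x_0\in\mathbb R$ with $y\ge 0$; together with interior continuity and the identity $\Phi(x)=g(x)$ this gives continuity on $\overline{\mathbb{C}_+}.$ Using the mass-one property I would write $\Phi(x+iy)-g(x_0)=\int_{\mathbb R}P_y(x-t)\,[\,g(t)-g(x_0)\,]\,dt$ and split at $|t-x_0|=\delta.$ Given $\varepsilon>0,$ uniform continuity of $g$ furnishes $\delta>0$ with $|g(t)-g(x_0)|<\varepsilon$ on $|t-x_0|<\delta,$ so the near part is bounded by $\varepsilon\int_{\mathbb R}P_y=\varepsilon.$ For the far part, confining $|x-x_0|<\delta/2$ gives $|x-t|\ge\delta/2$ on $\{|t-x_0|\ge\delta\},$ whence that contribution is at most $2\|g\|_\infty\int_{|u|\ge\delta/2}P_y(u)\,du,$ which tends to $0$ as $y\to 0^+$ by the tail estimate. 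Hence $\limsup_{z\to x_0}|\Phi(z)-g(x_0)|\le\varepsilon$ for every $\varepsilon,$ and the boundary values are attained continuously; a sequence approaching $x_0$ with $y_n=0$ is handled directly by continuity of $g.$

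The main obstacle is this boundary step, specifically making the far-part estimate uniform in the horizontal position $x$ while the point approaches $x_0$ jointly in both coordinates. The decisive device is to confine $|x-x_0|<\delta/2,$ which transfers the cut-off from the $t$-distance to the $u=x-t$ distance, after which the explicit tail bound $\frac{2}{\pi}\arctan\frac{y}{\delta/2}\to 0$ applies. Uniform continuity of $g$ enters precisely so that a single $\delta$ serves for all base points, delivering the convergence uniformly in $x_0$ and hence genuine continuity on all of $\overline{\mathbb{C}_+}.$
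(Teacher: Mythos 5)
Your proof is correct, but there is nothing in the paper to compare it against: the paper states this result as Theorem~\ref{D} with an explicit citation (Corollary~3.2 in~\cite{garnett}) and uses it as a black box in the proof of Theorem~\ref{B}, offering no proof of its own. Your blind argument is the standard self-contained one for this classical fact, and every step checks out: the kernel identity $P_y(x-t)=\frac1\pi\,\mathrm{Im}\,\frac{1}{t-z}$ is right, so harmonicity of each $K_t$ on $\mathbb C_+$ is immediate; interior continuity via a locally uniform integrable majorant is sound; Fubini is legitimate since $\int_0^{2\pi}\int_{\mathbb R}K_t(z_0+re^{i\vartheta})\,dt\,d\vartheta=2\pi<\infty$ by the mass-one property, and the mean-value characterization of harmonicity closes that half. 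Your tail computation $\int_{|u|\ge\delta}P_y(u)\,du=\frac{2}{\pi}\arctan\frac{y}{\delta}$ is exact, and the device of confining $|x-x_0|<\delta/2$ to convert the cut-off in $t-x_0$ into one in $x-t$ is precisely the right move for joint (two-dimensional) approach to the boundary point. One observation worth recording: for the continuity assertion as literally stated, pointwise continuity of $g$ at each $x_0$ together with boundedness would already suffice, since your near/far split only uses continuity at the single base point; the uniform continuity hypothesis is what upgrades the conclusion to convergence $\Phi(x+iy)\to g(x)$ uniformly in $x$ as $y\to0^+$, which is the form in which Garnett's Corollary~3.2 is actually stated. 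So your proof in fact recovers the stronger, cited statement, not merely the weaker reading.
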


\begin{definition}[\cite{conway, garnett}]
Let $w \colon \Omega \to \mathbb R$ be a continuous function, where $\Omega \subseteq \mathbb C$ is a domain. 
For $\lambda \in \mathbb C$ and $\delta >0,$ denote $B_\delta(\lambda) = \{z \in \mathbb C \colon |z - \lambda| < \delta \}.$
The function $w$ is called subharmonic if 
$$w(\lambda) \le \frac{1}{2\pi} \int_0^{2\pi} w(\lambda + \delta e^{it}) \, dt $$
whenever $\overline{B_\delta(\lambda)} \subset \Omega.$
Clearly, every harmonic function is subharmonic.
\end{definition}

\begin{theoremA}[Maximum Principle, see \S3.3 in \cite{conway}]
\label{E}  Let $w_1$ and $w_2$ be real-valued continuous functions on a domain $\Omega \subseteq \mathbb C.$
Denote $\partial_\infty \Omega = \partial \Omega$ if $\Omega$ is bounded and $\partial_\infty \Omega = \partial \Omega\cup\{\infty\}$ if $\Omega$ is unbounded. Assume that $w_1$ and $-w_2$ are subharmonic and that 
$$\mathop{\lim \sup}_{\substack{z \to \lambda, \\ z \in \Omega}} w_1(z) \le \mathop{\lim \inf}_{\substack{z \to \lambda, \\ z \in \Omega}} w_2(z), \quad \lambda \in \partial_\infty \Omega.$$
Then, $w_1(z) \le w_2(z)$ for $z \in \Omega.$
\end{theoremA}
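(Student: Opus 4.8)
The plan is to collapse the two-function statement into a one-function maximum principle and then derive the latter directly from the circle-average (sub-mean-value) inequality in the preceding definition. First I would set $u := w_1 - w_2$. Since $w_1$ and $-w_2$ are subharmonic and the defining inequality is clearly preserved under addition, $u$ is subharmonic and continuous on $\Omega$. The hypothesis rewrites, using $\limsup(w_1 - w_2) \le \limsup w_1 - \liminf w_2$, as $\limsup_{z \to \lambda} u(z) \le 0$ for every $\lambda \in \partial_\infty\Omega$, while the desired conclusion $w_1 \le w_2$ becomes $u \le 0$ on $\Omega.$ Thus it suffices to prove: a subharmonic $u$ on $\Omega$ with $\limsup_{z\to\lambda} u(z) \le 0$ for all $\lambda \in \partial_\infty\Omega$ satisfies $u \le 0.$

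Fix $\varepsilon > 0$ and set $K_\varepsilon := \{ z \in \Omega : u(z) \ge \varepsilon\}.$ The crucial step is to show that $K_\varepsilon$ is a compact subset of $\Omega.$ Boundedness comes from the condition at $\infty$—present exactly because $\partial_\infty\Omega$ contains $\infty$ when $\Omega$ is unbounded—which gives $u(z) < \varepsilon$ outside a sufficiently large disk. Closedness in $\mathbb C$ follows because, were a sequence in $K_\varepsilon$ to approach a finite point $\lambda \in \partial\Omega,$ we would get $\limsup_{z\to\lambda} u(z) \ge \varepsilon > 0,$ contradicting the hypothesis; hence $\overline{K_\varepsilon} \subset \Omega.$ If $K_\varepsilon = \emptyset$ for every $\varepsilon > 0,$ then $u < \varepsilon$ everywhere, and letting $\varepsilon \to 0$ yields $u \le 0,$ as required.

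Otherwise, for some $\varepsilon > 0$ the set $K_\varepsilon$ is nonempty and compact, so the continuous function $u$ attains on it the value $M := \sup_\Omega u \ge \varepsilon > 0$ at an interior point $z_0 \in \Omega.$ Here the strong maximum principle enters. I would consider $E := \{ z \in \Omega : u(z) = M\},$ which is nonempty and closed in $\Omega$ by continuity. To show $E$ is open, take $z_1 \in E$ and any disk $\overline{B_\delta(z_1)} \subset \Omega$; the sub-mean-value inequality gives $M = u(z_1) \le \frac{1}{2\pi}\int_0^{2\pi} u(z_1 + \delta e^{it})\,dt \le M,$ and since $u \le M,$ equality of the two bounds forces $u \equiv M$ on the circle for every admissible radius, hence on the whole disk. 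By connectedness of $\Omega$ we conclude $E = \Omega,$ i.e. $u \equiv M > 0$; but then $\limsup_{z\to\lambda} u(z) = M > 0$ at any $\lambda \in \partial_\infty\Omega,$ contradicting the hypothesis. This contradiction shows every $K_\varepsilon$ is empty, and the proof is complete.

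The main obstacle I anticipate is establishing the compactness of $K_\varepsilon$: one must simultaneously control the behaviour of $u$ near the finite boundary and, when $\Omega$ is unbounded, at infinity. This is precisely where the $\limsup$ form of the boundary hypothesis and the convention $\partial_\infty\Omega = \partial\Omega \cup \{\infty\}$ are used in an essential way. By contrast, the strong-maximum-principle step is routine once the circle-average inequality from the definition of subharmonicity is invoked.
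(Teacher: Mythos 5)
Your proof is correct, and in fact the paper offers no proof of Theorem~E at all: it is quoted as background from Conway's book, so the only ``paper proof'' is the citation. Your argument --- reduce to the single subharmonic function $u = w_1 - w_2$, use the boundary hypothesis at finite boundary points and at $\infty$ to show each superlevel set $K_\varepsilon$ is a compact subset of $\Omega,$ and then exclude an attained interior maximum by the circle-average argument (the set $\{u = M\}$ is open and closed, hence all of the connected domain, contradicting the boundary condition) --- is precisely the classical proof, essentially the one in the cited source, so there is no methodological divergence to report. One caveat, which you inherit from the statement rather than introduce yourself: the subtraction step $\limsup (w_1 - w_2) \le \limsup w_1 - \liminf w_2$ is indeterminate if at some boundary point both $\limsup_{z\to\lambda} w_1(z)$ and $\liminf_{z\to\lambda} w_2(z)$ equal $+\infty$ (or both equal $-\infty$), and in that degenerate case the literal statement is actually false: take $\Omega$ the punctured unit disk, $w_1(z) = -2\log|z|,$ $w_2(z) = -\log|z|;$ then $w_1$ and $-w_2$ are harmonic, the hypothesis holds at every boundary point (reading $+\infty \le +\infty$ at $\lambda = 0$), yet $w_1 > w_2$ throughout $\Omega.$ So the theorem must be understood with the nondegeneracy convention implicit, which is harmless here: in the paper's only application (the proof of Theorem~B) it is applied with $w_1 = \Psi$ bounded and vanishing at $\infty$ and $w_2 = \Phi$ continuous up to the boundary, so all boundary limits are finite and your reduction goes through verbatim.
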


\begin{proof}[Proof of Theorem~\ref{B}]
Applying Gauss' mean value theorem to $\psi(s, x, \lambda)$ in $\lambda \in \mathbb C_+$ and proceeding to the supremum by $x,s \ge 0,$ 
we obtain $$\Psi(\lambda) \le \frac{1}{2 \pi} \int_0^{2\pi} \Psi(\lambda + \delta e^{i t}) \, dt, \quad \lambda \in \mathbb C_+, \quad \delta \in (0, \mathrm{Im}\, \lambda).$$ Then, the function $\Psi$ is subharmonic in $\mathbb C_+.$ 

Define $\Phi(z)$ for $z \in \mathbb C_+$ by~\eqref{poisson} with $g =\Psi$
and put $\Phi(z) = \Psi(z)$ for $z \in \mathbb R.$ 
Proposition~\ref{Psi} yields that $\Psi(z)$ is uniformly continuous and bounded on $\mathbb R.$ 
Then, by Theorem~\ref{D}, the function $\Phi$ is harmonic on $\mathbb C_+$ and continuous on $\overline{\mathbb C_+}.$ 
Since $\displaystyle\lim_{z \to \infty} \Psi(z) = 0,$ $\Phi(z) \ge 0,$ and for any $x \in \mathbb R,$ $\displaystyle\lim_{z \to x} \Psi(z) = \displaystyle\lim_{z \to x} \Phi(z),$ we can apply Theorem~\ref{E} with $w_1 = \Psi$ and $w_2 = \Phi.$
Then, we arrive at $\Psi(\lambda) \le \Phi(\lambda)$ for $\lambda \in \mathbb C_+.$  %(see, e.g.,~\cite[Theorem~6.3]{garnett}). 

By virtue of Theorem~\ref{A}, we also have $\Psi \in L_2(\mathbb R).$ Applying Theorem~\ref{C}, we obtain $\Phi \in L_2(\mathbb C_+, \mu),$ and, consequently, $\Psi \in L_2(\mathbb C_+, \mu).$ 
\end{proof}

Now, we are ready to prove Theorem~\ref{theta theorem}.
\begin{proof}[Proof of Theorem~\ref{theta theorem}]
For each pair of indices $j,l = \overline{1, n},$ we prove that $\displaystyle\sup_{x, s \ge \alpha} |\nu_{jl}(s, x, \lambda)| \in L_2(\Sigma).$ We can exclude from consideration the pairs with $b_l = b_j,$ because for them  $\nu_{jl} \equiv q_{jl} \equiv 0.$
For the rest values $j, l$ and $\lambda \in \overline{\Lambda},$ under condition~\eqref{omega numer}, we obtain the estimate 
\begin{equation} \label{tilde lambda}
\sup_{x, s \ge \alpha} |\nu_{jl}(s, x, \lambda)| \le \Psi_{jl}(\tilde \lambda), \quad 
\overline{\mathbb C_+} \ni 
\tilde \lambda = \left\{\begin{array}{cc}
-i \lambda (b_j - b_l), & l < j,\\
i \lambda (b_j - b_l), & j < l,
\end{array}\right.
\end{equation}
where $\Psi_{jl}(\lambda)$ is constructed as the function $\Psi(\lambda)$ in~\eqref{psi} with 
$f(\xi) := \frac{q_{jl}(p_\alpha^{-1}(\xi))}{\rho(p_\alpha^{-1}(\xi))},$ while $p_\alpha^{-1}$ is the inverse function to $p_\alpha(x) := p(x) - p(\alpha).$ Note that under our assumptions on $a_{jl}$ and $\rho,$ $f \in L\cap L_2[0, \infty).$

To prove~\eqref{tilde lambda},
one should consider all possible positions of $j$ and $l$ with respect to the fixed $k \in \overline{1, n}$ as listed in~\eqref{rule}. 
For definiteness, let $j, l < k;$ the other cases are proceeded analogously (see~\cite[Theorem~3.1]{sav-calderon}). Then, we can assume $x < s$ (otherwise $\nu_{jl} \equiv 0$). Consider the subcase $j < l.$
We can rewrite
$$\nu_{jl}(s, x, \lambda) = e^{\lambda(b_l - \omega)(p_\alpha(x) - p_\alpha(s))} \int_x^s q_{jl}(t) e^{\tilde \lambda i (p_\alpha(t) - p_\alpha(x))}\,dt, \quad \big|e^{\lambda(b_l - \omega)(p_\alpha(x) - p_\alpha(s))}\big| \overset{(\ref{omega numer})}{\le} 1.
$$ 
After change of the variables $t = p_\alpha^{-1}(\xi),$ $x = p_\alpha^{-1}(\zeta),$ and $s = p_\alpha^{-1}(\eta),$ we arrive at
$$\sup_{s, x \ge \alpha} |\nu_{jl}(s, x, \lambda)| \le \sup_{\eta > \zeta \ge 0}\left|\int_\zeta^\eta f(\xi) e^{\tilde \lambda i(\xi - \zeta)} \, d\xi\right| \le \Psi_{jl}(\tilde \lambda).$$
Consider the subcase $l < j,$ in which we use the representation 
$$\nu_{jl}(s, x, \lambda) = e^{\lambda(b_j - \omega)(p_\alpha(x) - p_\alpha(s))} \int_x^s q_{jl}(t) e^{\tilde \lambda i (p_\alpha(s) - p_\alpha(t))}\,dt, \quad \big|e^{\lambda(b_j - \omega)(p_\alpha(x) - p_\alpha(s))}\big| \overset{(\ref{omega numer})}{\le} 1.
$$ 
After change of the variables $t = p_\alpha^{-1}(\xi),$ $x = p_\alpha^{-1}(\zeta),$ and $s = p_\alpha^{-1}(\eta),$ we arrive at
$$\sup_{s, x \ge \alpha} |\nu_{jl}(s, x, \lambda)| \le \sup_{\eta > \zeta \ge 0}\left|\int_\zeta^\eta f(\xi) e^{\tilde \lambda i(\eta-\xi)} \, d\xi\right| \le \Psi_{jl}(\tilde \lambda).$$

Thus, we have obtained~\eqref{tilde lambda}. Denote by ${\tilde \Sigma}$ the image of the half-line $\Sigma$ after action of the linear mapping $\lambda \to \tilde \lambda.$ %given in~\eqref{tilde lambda}. 
Then, ${\tilde \Sigma} \subset \overline{\mathbb C_+}$ is a half-line, and it remains to prove that $\Psi_{ij}(\tilde \lambda) \in L_2({\tilde \Sigma}).$ 
 If $\tilde\Sigma \subset \mathbb R,$ then the desired statement follows from Theorem~\ref{A}. If $\tilde\Sigma \subset \mathbb C_+,$ then we use the representation $\int_{\tilde \Sigma}  |\Psi_{ij}(\tilde \lambda)|^2 \, d\tilde\lambda = \int_{\mathbb C_+}  |\Psi_{ij}(\tilde \lambda)|^2 \, d\mu,$ where $\mu$ is a Carleson measure, and apply Theorem~\ref{B}.
\end{proof}

\section{Appendix: holomorphic mappings} \label{appendix B}
Here, we provide some basic notions and theorems from the theory of holomorphic mappings, see~\cite{hol}. 
We also obtain Proposition~\ref{hol} that is used for proving the holomorphy of the function~\eqref{sol rep} constructed in Lemma~\ref{main lemma}.

Let $\bf X$ be a complex Banach space with the norm $\| \cdot\|_{\bf X}$ and let $\Omega \subseteq \mathbb C$ be a domain. 
\begin{definition}
%Consider a mapping $G \colon \Omega \to \bf X,$ where $\Omega \subseteq \mathbb C$ is non-empty.
%A mapping $G(\lambda)$ is called continuous in $\Omega$ if  $\|G(\xi) - G(\lambda)\|_{\bf X} \to 0$ as $\Omega \ni \xi \to \lambda.$
A mapping $G\colon  \Omega \to \bf X$ is called holomorphic in $\Omega$ if it is differentiable, i.e.  
$$\forall\lambda \in \Omega\ \exists\dot{G}(\lambda) \in {\bf X} \colon \quad \lim_{\xi \to \lambda}\left\|\frac{G(\xi) - G(\lambda)}{\xi - \lambda} - \dot{G}(\lambda)\right\|_{\bf X} = 0.$$
In this case, the mapping $\dot G \colon  \Omega \to \bf X$ is called the derivative of $G.$ 
\end{definition}
It is well-known that holomorphy is a stronger property than continuity (a mapping $G\colon \Omega \to \bf X$ is continuous  if  $\|G(\xi) - G(\lambda)\|_{\bf X} \to 0$ as $\xi \to \lambda \in \Omega$). Holomorphic mappings are the objects that generalize holomorphic functions, and in general, they have similar properties, see~\cite{hol}. Obviously, a linear combination of holomorphic mappings is a holomorphic mapping. 

The following theorem is an analogue of the statement that the uniform limit of holomorphic functions is also holomorphic.

\begin{theoremA}[9D in~\cite{hol}] \label{lim hol maps}
Let $G_n \colon \Omega \to {\bf X},$ $n \ge 0,$ be a sequence of holomorphic mappings. 
If $G_n$ converges uniformly on each compact subset of $\Omega$ to the mapping $G\colon \Omega \to {\bf X},$ then $G$ is holomorphic.
\end{theoremA}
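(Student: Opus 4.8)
The plan is to reduce this vector-valued Weierstrass-type statement to the classical scalar theorem by testing against continuous linear functionals. The key tool I would invoke is the weak-to-strong holomorphy criterion (Dunford's theorem): a mapping $G \colon \Omega \to {\bf X}$ is holomorphic in the sense of the Definition above precisely when $\phi \circ G \colon \Omega \to \mathbb{C}$ is holomorphic for every bounded linear functional $\phi \in {\bf X}^*.$ With this criterion in hand, everything is reduced to scalar-valued complex analysis.

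First I would fix an arbitrary $\phi \in {\bf X}^*.$ Since each $G_n$ is holomorphic with derivative $\dot G_n,$ the linearity and continuity of $\phi$ show that $\phi \circ G_n$ is a scalar holomorphic function on $\Omega$ (with derivative $\phi(\dot G_n(\lambda))$). Second, the uniform-on-compacts hypothesis passes to the scalar level: for any compact $K \subset \Omega,$
\[
\sup_{\lambda \in K} \big|\phi(G_n(\lambda)) - \phi(G(\lambda))\big| \le \|\phi\|_{{\bf X}^*}\, \sup_{\lambda \in K} \|G_n(\lambda) - G(\lambda)\|_{\bf X} \to 0,
\]
so $\phi \circ G_n \to \phi \circ G$ uniformly on every compact subset of $\Omega.$ By the classical theorem that a locally uniform limit of holomorphic functions is holomorphic, $\phi \circ G$ is holomorphic. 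As $\phi$ ranges over all of ${\bf X}^*,$ the mapping $G$ is weakly holomorphic, and the Dunford criterion then yields that $G$ is holomorphic, completing the argument.

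The main obstacle is the weak-to-strong equivalence itself, which is the only non-elementary ingredient: its proof rests on the uniform boundedness principle together with a vector-valued Cauchy integral formula, and this is where the completeness of ${\bf X}$ is genuinely used. If one prefers a self-contained route that avoids this black box, I would instead argue directly: fix $\lambda_0$ and a closed disk $\overline{B_r(\lambda_0)} \subset \Omega,$ establish the Cauchy integral representation $G_n(\lambda) = \frac{1}{2\pi i}\oint_{|\zeta - \lambda_0| = r} \frac{G_n(\zeta)}{\zeta - \lambda}\,d\zeta$ for each $G_n$ as an ${\bf X}$-valued contour integral, pass to the limit using the uniform convergence of $G_n$ on the compact circle to obtain the same representation for $G,$ and then verify that such a Cauchy-type integral defines a holomorphic mapping by differentiating under the integral sign. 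Both routes ultimately rely on the same vector-valued Cauchy machinery; the functional-testing approach merely packages it inside the cited equivalence.
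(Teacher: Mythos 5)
Your argument is correct, but note that the paper does not actually prove Theorem~\ref{lim hol maps}: it is imported verbatim as Theorem~9D from~\cite{hol}, so there is no internal proof to compare against. Your first route is sound: each $\phi\circ G_n$ is scalar holomorphic, the estimate $|\phi(G_n(\lambda))-\phi(G(\lambda))|\le\|\phi\|_{{\bf X}^*}\|G_n(\lambda)-G(\lambda)\|_{\bf X}$ transfers local uniform convergence to the scalar level, the classical Weierstrass theorem gives holomorphy of $\phi\circ G$, and the weak-to-strong criterion finishes the job. It is worth observing that the ``black box'' you invoke is exactly Theorem~\ref{H} of the paper (Theorem~9.12 in~\cite{hol}), which is quoted in the same appendix; so your derivation shows that Theorem~\ref{lim hol maps} is in fact a corollary of the other imported result, reducing the number of external facts the paper relies on from two to one. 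Your alternative route via the ${\bf X}$-valued Cauchy integral representation and differentiation under the integral sign is the standard textbook proof and is also correct; as you say, both paths ultimately rest on the vector-valued Cauchy machinery and on the completeness of ${\bf X}$, which is guaranteed since ${\bf X}$ is assumed to be a Banach space. No gaps.
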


We also need two theorems that have no analogues in classical complex analysis.
\begin{theoremA}[12C in~\cite{hol}] \label{G}
Let $G \colon \Omega \to {\bf X},$ where ${\bf X}$ is the Cartesian product of complex Banach spaces ${\bf X}_1, \ldots, {\bf X}_m.$ Let $G_k \colon \Omega \to {\bf X}_k$ be the coordinate map of $G,$ $k = \overline{1, m}.$
Then, $G$ is holomorphic in $\Omega$ if and only if  $G_k$ is  holomorphic in $\Omega,$ $k=\overline{1, m}.$ 
\end{theoremA}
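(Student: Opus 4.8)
The plan is to work directly from the definition of holomorphy as differentiability given just above in Appendix~\ref{appendix B}, so that the whole statement reduces to a single norm comparison between difference quotients in each direction. First I would fix a convenient norm on the product space ${\bf X} = {\bf X}_1 \times \cdots \times {\bf X}_m,$ namely $\| (x_1, \ldots, x_m)\|_{\bf X} := \max_{k=\overline{1,m}} \| x_k\|_{{\bf X}_k};$ since the product is finite, any two natural product norms are equivalent, and differentiability is a norm-invariant property, so this choice is harmless. With this norm the coordinate projections $\pi_k \colon {\bf X} \to {\bf X}_k$ are linear and contractive, $\| \pi_k(x)\|_{{\bf X}_k} \le \| x\|_{\bf X},$ while $G_k = \pi_k \circ G$ by definition of the coordinate maps.

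For the forward implication I would assume $G$ is holomorphic with derivative $\dot G(\lambda) \in {\bf X}$ and propose $\pi_k(\dot G(\lambda))$ as the derivative of $G_k.$ Applying the linear contraction $\pi_k$ to the difference quotient gives
\[
\left\| \frac{G_k(\xi) - G_k(\lambda)}{\xi - \lambda} - \pi_k\big(\dot G(\lambda)\big)\right\|_{{\bf X}_k} \le \left\| \frac{G(\xi) - G(\lambda)}{\xi - \lambda} - \dot G(\lambda)\right\|_{\bf X} \to 0, \quad \xi \to \lambda,
\]
so each $G_k$ is differentiable at every $\lambda \in \Omega,$ i.e. holomorphic in $\Omega.$

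For the converse I would assume each $G_k$ is holomorphic with derivative $\dot G_k(\lambda) \in {\bf X}_k$ and assemble the candidate $\dot G(\lambda) := (\dot G_1(\lambda), \ldots, \dot G_m(\lambda)) \in {\bf X}.$ Since the projections are linear, the $k$-th coordinate of the vector $\frac{G(\xi)-G(\lambda)}{\xi-\lambda} - \dot G(\lambda)$ is exactly $\frac{G_k(\xi)-G_k(\lambda)}{\xi-\lambda} - \dot G_k(\lambda),$ so by the definition of the max-norm,
\[
\left\| \frac{G(\xi)-G(\lambda)}{\xi-\lambda} - \dot G(\lambda)\right\|_{\bf X} = \max_{k=\overline{1,m}} \left\| \frac{G_k(\xi)-G_k(\lambda)}{\xi-\lambda} - \dot G_k(\lambda)\right\|_{{\bf X}_k}.
\]
Each term on the right tends to $0$ as $\xi \to \lambda,$ and a maximum of finitely many quantities tending to $0$ tends to $0;$ hence $G$ is differentiable with derivative $\dot G(\lambda),$ i.e. holomorphic in $\Omega.$

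I do not expect a genuine obstacle here: the statement is the Banach-space analogue of the elementary fact that a vector-valued map is differentiable if and only if each component is, and the argument is a one-line norm estimate in each direction. The only point deserving a word of care is that the product is \emph{finite}, so the maximum of the coordinate errors is governed by finitely many null sequences; for an infinite product this step would fail and one would have to be careful about the product topology. Note also that the definition of holomorphy adopted in Appendix~\ref{appendix B} is the strong (Fr\'echet) one, so no passage from weak (scalar) differentiability to strong differentiability, and hence no appeal to uniform boundedness, is required.
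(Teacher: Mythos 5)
Your proof is correct. Note that the paper does not prove this statement at all: it is quoted as Theorem~12C from the cited monograph of Chae on holomorphy in normed spaces, so there is no in-paper argument to compare against. Your argument is the standard one — equip the finite product with the max norm, push the difference quotient through the contractive linear projections $\pi_k$ for the forward direction, and assemble the candidate derivative coordinate-wise for the converse, using that a maximum of finitely many null quantities is null — and your closing remarks correctly identify the two points that make this work, namely the finiteness of the product and the fact that holomorphy is taken here in the strong (Fr\'echet) sense, so no weak-to-strong upgrade is needed.
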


\begin{theoremA}[9.12 in~\cite{hol}] \label{H}
Denote by ${\bf X}^*$ the set of all linear bounded functionals on ${\bf X}.$ 
A mapping $G \colon \Omega \to {\bf X}$ is holomorphic if and only if  for any $f \in {\bf X}^*,$ $f(G(\lambda))$ is a holomorphic function in $\Omega.$ 
\end{theoremA}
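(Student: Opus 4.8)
The plan is to prove the two implications separately, with the forward one (norm holomorphy $\Rightarrow$ weak holomorphy) being immediate and the reverse one carrying all the difficulty. For the forward direction, if $G$ is holomorphic with derivative $\dot G(\lambda),$ then for any $f \in {\bf X}^*$ the linearity and continuity of $f$ give
$$\lim_{\xi \to \lambda} \frac{f(G(\xi)) - f(G(\lambda))}{\xi - \lambda} = f\Big(\lim_{\xi \to \lambda}\frac{G(\xi) - G(\lambda)}{\xi - \lambda}\Big) = f(\dot G(\lambda)),$$
so $f \circ G$ is a holomorphic scalar function (with derivative $f(\dot G(\lambda))$).

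For the reverse direction, assuming every $f \circ G$ holomorphic, I would first establish local boundedness of $G.$ Fixing $\lambda_0 \in \Omega$ and a closed disk $\overline{B_r(\lambda_0)} \subset \Omega,$ each scalar map $f \circ G$ is holomorphic, hence continuous, hence bounded on the compact $\overline{B_r(\lambda_0)};$ thus $\sup_{\lambda}|f(G(\lambda))| < \infty$ for every $f \in {\bf X}^*.$ Viewing the family $\{G(\lambda)\}$ inside ${\bf X}^{**}$ via the canonical embedding, the uniform boundedness principle then yields $M := \sup_{\lambda \in \overline{B_r(\lambda_0)}} \| G(\lambda)\|_{\bf X} < \infty.$

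Next I would upgrade weak differentiability to norm differentiability by a Cauchy-integral argument. Writing $\Gamma = \partial B_r(\lambda_0)$ and applying the scalar Cauchy formula to $h := f \circ G,$ a direct manipulation of kernels gives, for $\lambda, \mu \in B_{r/2}(\lambda_0),$
$$\frac{h(\lambda) - h(\lambda_0)}{\lambda - \lambda_0} - \frac{h(\mu) - h(\lambda_0)}{\mu - \lambda_0} = \frac{\lambda - \mu}{2\pi i}\int_\Gamma \frac{h(\zeta)\,d\zeta}{(\zeta - \lambda_0)(\zeta - \lambda)(\zeta - \mu)}.$$
Since $|\zeta - \lambda_0| = r$ and $|\zeta - \lambda|, |\zeta - \mu| \ge r/2$ on $\Gamma,$ bounding $|h(\zeta)| \le M \|f\|_{{\bf X}^*}$ gives $|f(D(\lambda)) - f(D(\mu))| \le C M \|f\|_{{\bf X}^*}\,|\lambda - \mu|,$ where $D(\lambda) := (G(\lambda) - G(\lambda_0))/(\lambda - \lambda_0).$ Taking the supremum over $\|f\|_{{\bf X}^*} \le 1$ and using the duality formula $\|x\|_{\bf X} = \sup_{\|f\|_{{\bf X}^*}\le 1}|f(x)|,$ I obtain $\| D(\lambda) - D(\mu)\|_{\bf X} \le C M |\lambda - \mu|.$ Hence $\{D(\lambda)\}$ is Cauchy as $\lambda \to \lambda_0,$ and by completeness of $\bf X$ it converges in norm to some $\dot G(\lambda_0) \in {\bf X},$ which is exactly the strong derivative.

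The hard part is this final passage from pointwise (weak) differentiability to norm differentiability. It rests on two nonelementary ingredients: the uniform boundedness principle to secure the local bound $M,$ and the explicit Cauchy kernel computation above showing that the second difference of the quotients is $O(|\lambda - \mu|)$ uniformly on $\Gamma.$ Converting these scalar estimates into a genuine norm estimate, via the duality description of $\|\cdot\|_{\bf X},$ is the crux of the argument; everything else is routine.
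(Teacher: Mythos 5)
Your argument is correct, but note that the paper does not prove this statement at all: Theorem~\ref{H} is imported verbatim as Theorem~9.12 from the cited monograph of Chae on holomorphy in normed spaces, so there is no in-paper proof to compare against. What you have written is the classical Dunford theorem (weak holomorphy implies norm holomorphy), and it is exactly the standard argument: the easy direction follows from linearity and continuity of $f$; the converse combines local boundedness of $\lambda \mapsto \|G(\lambda)\|_{\bf X}$ (obtained via the canonical embedding into ${\bf X}^{**}$ and the uniform boundedness principle) with the Cauchy-kernel identity for the second difference of the quotients $D(\lambda)$, the bound $\|D(\lambda)-D(\mu)\|_{\bf X}\le CM|\lambda-\mu|$ via the Hahn--Banach duality $\|x\|_{\bf X}=\sup_{\|f\|_{{\bf X}^*}\le 1}|f(x)|$, and completeness of ${\bf X}$. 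All the steps check out, including the kernel identity (a direct partial-fraction computation from the scalar Cauchy formula) and the lower bounds $|\zeta-\lambda|,|\zeta-\mu|\ge r/2$ on $\Gamma$ for $\lambda,\mu\in B_{r/2}(\lambda_0)$. This is essentially the proof one finds in the reference the paper cites, so nothing is gained or lost relative to the paper's treatment other than self-containedness.
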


In particular, these theorems yield that if $\mathrm{\bf z} = [z_j(x, \lambda)]_{j=1}^n$ is a holomorphic mapping of $\lambda \in \Omega \to {\bf BC}_{n},$ then for $j \in \overline{1, n}$ and any fixed $x \ge \alpha,$ $z_j(x, \lambda)$ is a holomorphic function in $\Omega.$ The analyticity of the components $y_{jk}(x, \lambda)$ and $u_{jk}(x, \lambda)$ in Theorems~\ref{fss} and~\ref{neighboring} will follow from this fact.

%Thus, a mapping $\mathrm{\bf z} = [z_j(x, \lambda)]_{j=1}^n$ is a holomorphic mapping of $\lambda \in \Omega \to {\bf BC}_n$ if and only if each $z_j(x, \lambda)$ is a holomorphic mapping of $\lambda \in \Omega \to {\bf BC}_1,$  $j=\overline{1, n}.$

We aim to prove the following proposition.
\begin{prop} \label{hol}
Consider the operator ${\cal V}_k(\lambda)$ and the domain $\Lambda^\alpha$ determined in Lemma~\ref{main lemma}. Let $\mathrm{\bf w} \in {\bf BC}_n$ satisfy the conditions in item 3) of Lemma~\ref{main lemma}.
Then, the mapping ${\cal V}_k(\lambda)\mathrm{\bf w} \colon \Lambda^\alpha \to {\bf BC}_n$ is holomorphic.
\end{prop}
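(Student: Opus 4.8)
The plan is to reduce the holomorphy of the Banach-space-valued mapping $\lambda \mapsto {\cal V}_k(\lambda)\mathrm{\bf w}$ to the holomorphy of ordinary complex-valued functions, using the characterization criteria supplied by Theorems~\ref{lim hol maps}--\ref{H}. By Theorem~\ref{G}, since ${\bf BC}_n$ is the Cartesian product of $n$ copies of the scalar space ${\bf BC}_1$, it suffices to prove that each coordinate map $\lambda \mapsto f_j(\cdot, \lambda)$, the $j$-th component of ${\cal V}_k(\lambda)\mathrm{\bf w}$ given by~\eqref{operat}, is a holomorphic mapping of $\Lambda^\alpha$ into ${\bf BC}_1$. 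By Theorem~\ref{H}, this in turn is equivalent to showing that for every bounded linear functional $\phi \in ({\bf BC}_1)^*$, the scalar function $\lambda \mapsto \phi\big(f_j(\cdot, \lambda)\big)$ is holomorphic in $\Lambda^\alpha$ in the classical sense.

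Next I would verify classical holomorphy of $\lambda \mapsto \phi(f_j(\cdot,\lambda))$ using Morera's theorem together with continuity. Continuity of $\lambda \mapsto f_j(\cdot,\lambda)$ in ${\bf BC}_1$ follows by dominated convergence applied to the integral in~\eqref{operat}: the integrand depends continuously on $\lambda$ (recall $v_{jl}(t,\lambda)=q_{jl}(t)+r_{jl}(t,\lambda)$, where $r_{jl}(\cdot,\lambda)$ arises from the holomorphic mapping $C(\cdot,\lambda)$, and $\mathrm{\bf w}(\cdot,\lambda)$ is continuous by hypothesis), while the exponential factor satisfies $\big|e^{\lambda(b_j-\omega)(p(x)-p(t))}\big|\le 1$ on the relevant range by~\eqref{omega numer}. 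This bound furnishes the integrable majorant $|v_{jl}(t,\lambda)|\,\|\mathrm{\bf w}\|_{{\bf BC}_n}$ uniformly for $\lambda$ in a neighborhood, given the summability established in~\eqref{QR}. For holomorphy, I would fix a closed triangle $\Delta\subset\Lambda^\alpha$ and compute $\oint_{\partial\Delta}\phi(f_j(\cdot,\lambda))\,d\lambda$; pushing $\phi$ and the contour integral inside the $t$-integral (justified by continuity and Fubini, since $\phi$ is bounded and linear), the inner contour integral $\oint_{\partial\Delta} v_{jl}(t,\lambda)e^{\lambda(b_j-\omega)(p(x)-p(t))}w_l(t,\lambda)\,d\lambda$ vanishes because, for fixed $t$, the integrand is holomorphic in $\lambda$: the exponential is entire in $\lambda$, $w_l(t,\cdot)$ is holomorphic (as $\mathrm{\bf w}$ is a holomorphic mapping into ${\bf BC}_n$, its evaluation at fixed $t$ is holomorphic by the remark following Theorem~\ref{H}), and $r_{jl}(t,\cdot)$ is holomorphic since $C(\cdot,\lambda)$ is a holomorphic mapping into $L[0,\infty)$.

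The main obstacle I anticipate is the rigorous justification of interchanging the contour integral in $\lambda$ with the Lebesgue integral in $t$ (and with the action of $\phi$), because the $t$-integral runs over an unbounded interval, namely $(x,\infty)$ for $j<k$. The resolution is to control the double integral by the fixed summable majorant $|v_{jl}(t,\lambda)|\le |q_{jl}(t)|+e^{2a}\gamma_\alpha(\lambda)$-type bounds from~\eqref{QR}, which are uniform for $\lambda$ ranging over the compact set $\partial\Delta$ (here the continuity of $\gamma_\alpha(\lambda)$ on $\mathbb C_0$, noted after~\eqref{QR}, guarantees a finite supremum), so that Fubini's theorem and the boundedness of $\phi$ legitimately permit all the interchanges. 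Once the inner contour integral is seen to vanish for a.e.\ fixed $t$, Morera's theorem yields holomorphy of $\phi(f_j(\cdot,\lambda))$, and ascending through Theorems~\ref{H} and~\ref{G} completes the proof.
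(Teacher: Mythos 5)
Your overall strategy is genuinely different from the paper's: the paper does not use Theorem~\ref{H} at all for this proposition, but instead writes down the candidate derivative $\dot h_j(x,\lambda)$ explicitly and proves norm convergence of the difference quotient in ${\bf BC}_1$ directly, the key technical input being Lemma~\ref{delta lemma}, which upgrades $\mathrm{Re}\,(\xi\beta)\ge 0$ to $\mathrm{Re}\,(\xi\beta)\ge\varepsilon>0$ on a ball around an interior point $\lambda$ and hence gives the uniform-in-$z$ bound $|z e^{-\xi\beta z}|\le (\varepsilon e)^{-1}$ needed to control the tail of the integral over the unbounded interval. Your route (weak holomorphy plus Morera plus Fubini) is a legitimate general idea, but as written it has a genuine gap precisely at the step you flag as the ``main obstacle.''

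The gap is the interchange $\phi\big(\int(\cdot)\,dt\big)=\int\phi(\cdot)\,dt$ for an \emph{arbitrary} $\phi\in({\bf BC}_1)^*$. Theorem~\ref{H} requires you to test against every bounded functional on ${\bf BC}_1=C_b[\alpha,\infty)$, and this dual space contains functionals that are not given by integration against a measure on $[\alpha,\infty)$ (they live on the Stone--\v{C}ech compactification), so ``Fubini'' does not apply to them. Nor can you fall back on a Bochner-integral representation $f_j(\cdot,\lambda)=\int_\alpha^\infty G(t,\lambda)\,dt$ with $G(t,\lambda)\in{\bf BC}_1$: because of the variable lower limit $\int_x^\infty$, the natural integrand $x\mapsto \mathbb 1_{\{x\le t\}}\,v_{jl}(t,\lambda)e^{\lambda\beta(p(x)-p(t))}w_l(t,\lambda)$ is discontinuous at $x=t$ and is not an element of ${\bf BC}_1$. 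Restricting to point evaluations $\delta_x$ (for which the interchange is fine) only gives holomorphy of $\lambda\mapsto f_j(x,\lambda)$ for each fixed $x$, which is strictly weaker than ${\bf BC}_1$-valued holomorphy unless you invoke a norming-subspace criterion (Arendt--Nikolski type) that is not among Theorems~\ref{lim hol maps}--\ref{H}. Two secondary points: (i) Morera needs continuity of the scalar function $\phi\circ f_j$, hence \emph{norm} continuity of $\lambda\mapsto f_j(\cdot,\lambda)$, i.e.\ uniformity in $x$; dominated convergence with the majorant you name gives only pointwise-in-$x$ convergence, and the uniform tail control again requires the Lemma~\ref{delta lemma} estimate. (ii) The claim that $r_{jl}(t,\cdot)$ is holomorphic for fixed $t$ does not follow directly from holomorphy of $C(\cdot,\lambda)$ as an $L[0,\infty)$-valued map, since point evaluation is not a bounded functional on $L^1$; one must pass through the Laurent expansion~\eqref{cc} to get an almost-everywhere statement. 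The paper's direct difference-quotient argument sidesteps all of these issues, which is presumably why it was chosen.
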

For the proof, we need a lemma.
\begin{lemma} \label{delta lemma}
Let ${\mathrm Z} \subset \mathbb C$ and $\beta \in \mathbb{C}_0$ be such that $\mathrm{Re}\,\big(\xi\beta\big) \ge 0$ for $\xi \in \overline{{\mathrm Z}}.$ Then, for any $\delta >0$ there exists $\varepsilon>0$ such that for $\xi \in {\mathrm Z}_{\delta} := \{ \xi \in {\mathrm Z} \colon \mathrm{dist}\,(\xi, \partial {\mathrm Z}) \ge \delta\},$ we have $\mathrm{Re}\,\big(\xi\beta\big) \ge \varepsilon.$
\end{lemma}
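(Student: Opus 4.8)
The plan is to reduce the statement to the elementary fact that $\xi \mapsto \mathrm{Re}(\xi\beta)$ is a real-linear functional that \emph{decreases at the constant rate} $|\beta|$ when $\xi$ is displaced in the direction $-\overline\beta/|\beta|$, together with the geometric observation that a point lying at distance at least $\delta$ from $\partial\mathrm{Z}$ carries an entire closed $\delta$-ball inside $\overline{\mathrm{Z}}$. Once these two ingredients are in place, the bound $\varepsilon = \delta|\beta|$ will drop out, uniformly in $\xi \in \mathrm{Z}_\delta$.

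First I would prove the inclusion $\overline{B_\delta(\xi)} \subseteq \overline{\mathrm{Z}}$ for every $\xi \in \mathrm{Z}_\delta$. Fixing such a $\xi$ and any $\eta$ with $|\eta-\xi| \le \delta$, I want to show $\eta \in \overline{\mathrm{Z}}$. If not, then $\eta$ lies in the open set $\mathbb C \setminus \overline{\mathrm{Z}}$, and along the segment $\gamma(t) = (1-t)\xi + t\eta$, $t\in[0,1]$, we have $\gamma(0)=\xi\in\overline{\mathrm{Z}}$ while $\gamma(1)=\eta\notin\overline{\mathrm{Z}}$. Setting $t^* = \max\{ t\in[0,1] \colon \gamma(t)\in\overline{\mathrm{Z}}\}$ (the set is closed and contains $0$), the point $\gamma(t^*)$ lies in $\overline{\mathrm{Z}}$ and is a limit of points of the open set $\mathbb C\setminus\overline{\mathrm{Z}}$, hence $\gamma(t^*) \in \partial(\overline{\mathrm{Z}}) \subseteq \partial\mathrm{Z}$, where the last inclusion follows from $\mathrm{int}(\mathrm{Z}) \subseteq \mathrm{int}(\overline{\mathrm{Z}})$. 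Since $\eta\notin\overline{\mathrm{Z}}$ forces $t^*<1$, we get $|\gamma(t^*)-\xi| = t^*|\eta-\xi| < |\eta-\xi| \le \delta$, contradicting $\mathrm{dist}(\xi,\partial\mathrm{Z})\ge\delta$. Thus $\eta\in\overline{\mathrm{Z}}$, which gives the claimed inclusion.

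Then I would exploit linearity. Writing $\beta = a+ib$ and $\xi = u+iv$, one has $\mathrm{Re}(\xi\beta) = ua - vb$, so that the increment of $\mathrm{Re}(\,\cdot\,\beta)$ under a displacement $d \in \mathbb C$ equals $\mathrm{Re}(d\beta)$. Choosing $d = -\delta\,\overline\beta/|\beta|$ produces the point $\xi' := \xi - \delta\,\overline\beta/|\beta|$ with $|\xi'-\xi| = \delta$, hence $\xi' \in \overline{B_\delta(\xi)} \subseteq \overline{\mathrm{Z}}$ by the first step, and a direct computation gives $\mathrm{Re}(\xi'\beta) = \mathrm{Re}(\xi\beta) - \tfrac{\delta}{|\beta|}\mathrm{Re}(\overline\beta\beta) = \mathrm{Re}(\xi\beta) - \delta|\beta|$. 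The hypothesis $\mathrm{Re}(\xi'\beta) \ge 0$ (valid since $\xi' \in \overline{\mathrm{Z}}$) then yields $\mathrm{Re}(\xi\beta) \ge \delta|\beta|$, so the choice $\varepsilon = \delta|\beta| > 0$ works and is independent of $\xi \in \mathrm{Z}_\delta$.

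The routine part is the second step, a one-line linear-algebra computation. The main obstacle is the topological claim $\overline{B_\delta(\xi)} \subseteq \overline{\mathrm{Z}}$: although geometrically evident, it must be argued for an arbitrary subset $\mathrm{Z}\subset\mathbb C$ (not assumed open), which is exactly why I would route it through the segment-crossing argument and the inclusion $\partial(\overline{\mathrm{Z}}) \subseteq \partial\mathrm{Z}$, and why I would keep track of the distinction between the open ball $B_\delta(\xi)$ and its closure when evaluating at the boundary point $\xi'$.
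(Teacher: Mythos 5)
Your proof is correct. It rests on the same geometric idea as the paper's --- a point lying $\delta$-deep inside $\mathrm{Z}$ must lie $\delta$-deep inside the half-plane $\{\xi \colon \mathrm{Re}\,(\xi\beta)\ge 0\}$ --- but you formalize it differently. The paper reduces to $\beta=1$, identifies $\mathrm{Re}\,\xi$ with $\mathrm{dist}(\xi,\partial\mathbb C_r)$, and invokes, without proof, the general fact that $\mathrm{dist}(\xi,\partial B)\ge\mathrm{dist}(\xi,\partial A)$ whenever $\xi\in A\subseteq B$; the general $\beta$ is then handled by the substitution $\tilde\xi=\xi\beta$. You instead prove the containment $\overline{B_\delta(\xi)}\subseteq\overline{\mathrm Z}$ from scratch via the segment-crossing argument (correctly using $\partial(\overline{\mathrm Z})\subseteq\partial\mathrm Z$, which is needed since $\mathrm Z$ is an arbitrary set), and then exploit the real-linearity of $\xi\mapsto\mathrm{Re}\,(\xi\beta)$ by stepping to $\xi'=\xi-\delta\overline\beta/|\beta|$. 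What your route buys is self-containedness: the topological step that the paper merely asserts is exactly the kind of fact that requires the argument you supply, and you handle all $\beta\in\mathbb C_0$ uniformly, arriving at the explicit constant $\varepsilon=\delta|\beta|$, consistent with the paper's $\varepsilon=\delta$ for $\beta=1$ after rescaling.
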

\begin{proof}
First, we prove the statement in the case $\beta=1.$
We have that ${\mathrm Z}$ is a subset of the closed right half-plane $\overline{\mathbb C_r} = \{ \lambda \in \mathbb C \colon  \mathrm{Re}\,\lambda \ge 0\}.$ 
Note that for arbitrary sets $A$ and $B$ in a normed space, $\mathrm{dist}(\xi, \partial B) \ge \mathrm{dist}(\xi, \partial A)$ as soon as $A \subseteq B$ and $\xi \in A.$ Applying this inequality to $A = {\mathrm Z}$ and $B = \overline{\mathbb C_r},$ we obtain
$$\mathrm{Re}\,\xi = \mathrm{dist}(\xi, \partial \mathbb C_r) \ge \mathrm{dist}(\xi, \partial {\mathrm Z}) \ge \delta, \quad \xi \in {\mathrm Z}_\delta.$$ 
Thus, we arrive at the needed statement with $\varepsilon=\delta.$

The case $\beta \ne 1$ reduces to the previous one by the substitution $\xi \beta =: \tilde \xi.$ 
\end{proof}
%%%%%%%%%%%%%%%%%%%%%%%%%%%%%%%%%%
\begin{proof}[Proof of Proposition~\ref{hol}.]
Denote $\mathrm{\bf w} =: [w_j(x, \lambda)]_{j=1}^n$ and ${\cal V}_k(\lambda)\mathrm{\bf w} =: [h_j(x, \lambda)]_{j=1}^n.$
By Theorem~\ref{G}, we should prove that for each  $j\in \overline{1, n},$ the component $h_j(x, \lambda)$ is a holomorphic mapping of $\lambda \in \Lambda^\alpha \to {\bf BC}_1.$
 Remind that ${\bf BC}_1$ denotes the Banach space of the functions that are bounded and continuous on $[\alpha, \infty)$ having the norm $\displaystyle\| \psi \|_{{\bf BC}_1} = \sup_{x \ge \alpha} |\psi(x)|.$ 
  
For definiteness, assume $j < k$ (the case $j \ge k$ is analogous). 
Then, denoting $\beta = b_j - \omega$ and $f(t, \lambda) = -\sum_{l=1}^n v_{jl}(t, \lambda) w_l(t, \lambda),$ we have
\begin{equation*} \label{hhh}
h_j(x, \lambda) = \int_x^\infty f(t, \lambda) e^{\lambda \beta (p(x) - p(t))}\, dt.
\end{equation*}
Under our assumptions on the matrix $C(x, \lambda),$ the elements $v_{jl}$  are holomorphic mappings of $\lambda \in \mathbb C_0 \to L[0, \infty).$  By direct differentiation, one can check that $f \colon \lambda \in \Lambda^\alpha \to L[\alpha, \infty)$ is a holomorphic mapping.

By~\eqref{omega numer}, for $\xi \in \Lambda^\alpha,$ we have $\mathrm{Re}\,\big(\xi \beta\big) \ge 0,$ and
\begin{equation} \label{weak estimate}
|e^{-\xi \beta}|\le 1, \quad \xi \in \Lambda^\alpha.
\end{equation}
Let $\lambda \in \Lambda^\alpha$ be fixed and let $\delta > 0$ be such that $\overline{B_{2\delta}(\lambda)}\subset \Lambda^\alpha,$ where we denoted $B_{r}(\lambda) = \{\xi \in \mathbb C \colon |\lambda - \xi| < r\},$ $r > 0.$ 
%For the case $\beta \ne 0,$ we need a few more inequalities. 
If $\beta \ne 0,$ we can apply Lemma~\ref{delta lemma} to ${\mathrm Z} := \Lambda^\alpha.$
Then, since $\mathrm{dist}\,(\xi, \partial \Lambda^\alpha) \ge \delta$ for $\xi \in B_{\delta}(\lambda),$ we have
 \begin{equation*} 
\left|e^{-\xi \beta}\right| \le e^{-\varepsilon}, \quad \xi \in B_{\delta}(\lambda), \quad \beta \ne 0,
\end{equation*}
where $\varepsilon > 0$ is the constant determined in Lemma~\ref{delta lemma} that depends on $\delta$ and $\beta.$
Computing the maximal value of the function $z e^{-\varepsilon z},$ $z \ge 0,$ we obtain another inequality:
\begin{equation} \label{strong estimate}
\left|z e^{-\xi \beta z}\right| \le z e^{-\varepsilon z} \le \frac{1}{\varepsilon e}, \quad z \ge 0, \; \xi \in B_{\delta}(\lambda), \quad  \beta \ne 0.
\end{equation}

Consider the function 
\begin{equation*} \label{hh}
\dot h_j(x, \lambda) = \int_x^\infty \dot f(t, \lambda) e^{\lambda \beta (p(x) - p(t))} \, dt + \beta\int_x^\infty f(t, \lambda) (p(x) - p(t)) e^{\lambda \beta (p(x) - p(t))} \, dt.
\end{equation*}
Inequalities~\eqref{weak estimate} and~\eqref{strong estimate} yield that $\dot h_j(x, \lambda) \in {\bf BC}_1,$ 
since $\dot f(\cdot, \lambda)$ and $f(\cdot, \lambda)$ belong to $L[\alpha, \infty).$ We prove that 
\begin{equation}\label{limit}
\lim_{\xi\to\lambda} \left\| \frac{h_j(\cdot, \xi) - h_j(\cdot, \lambda)}{\xi-\lambda} - \dot h_j(\cdot, \lambda) \right\|_{{\bf BC}_1} = 0.
\end{equation}
Consider a splitting of the expression under the norm $\| \cdot\|_{{\bf BC}_1}:$ 
\begin{multline} \label{splitting}
 \frac{h_j(x, \xi) - h_j(x, \lambda)}{\xi - \lambda} - \dot{h_j}(x, \lambda)
 = \int_x^\infty \left[\frac{f(t, \xi) - f(t,\lambda)}{\xi - \lambda} - \dot f(t, \lambda)\right] e^{-\xi \beta z(x, t)}\,dt + \\+ \int_x^\infty \dot f(t, \lambda) \big[e^{-\xi \beta z(x, t)} - e^{-\lambda \beta z(x, t)}\big]\,dt 
 + \int_x^\infty f(t, \lambda) \eta(x, t, \lambda, \xi)\,dt,
\end{multline}
where 
$$z(x, t) = p(t) - p(x)\ge 0, \quad \eta(x, t, \lambda, \xi) =\frac{e^{-\xi \beta z(x, t)} - e^{\lambda \beta z(x, t)}}{\xi - \lambda} + \beta z(x, t)e^{-\lambda \beta z(x, t)}.$$
Using~\eqref{weak estimate},~\eqref{strong estimate}, and the holomorphy property of $f(t, \lambda),$ one can prove that each of the three integrals  in~\eqref{splitting} tends to $0$  uniformly on $x \ge \alpha$ as $B_\delta(\lambda) \ni \xi \to \lambda.$ The proofs for the first and for the second integrals are more simple than the proof for the third one. We provide the proof for the third integral if $\beta \ne 0$ (otherwise it immediately turns $0$).

Let $\varepsilon_0 > 0$ be arbitrary and the constants $N, T>0$ be such that  $\| f(\cdot, \lambda)\|_{L[\alpha, \infty)} < N$ and $\| f(\cdot, \lambda)\|_{L[T, \infty)} < \varepsilon_0\varepsilon/(4|\beta|).$ Let us find $\delta_0 \in (0, \delta)$ such that for $\xi \in B_{\delta_0}(\lambda)$ and $x \ge \alpha,$ there holds the estimate $\left|\int_x^\infty f(t, \lambda) \eta(x, t, \lambda, \xi)\,dt\right| \le \varepsilon_0.$
Consider the inequality
\begin{multline} \label{unif int}
\left|\int_x^\infty f(t, \lambda) \eta(x, t, \lambda, \xi)\,dt\right|  \le \\
 \int_x^{\max\{x, T\}} |f(t, \lambda)| |\eta(x, t, \lambda, \xi)|\,dt +  \int_{\max\{x,T\}}^\infty |f(t, \lambda)| |\eta(x, t, \lambda, \xi)|\,dt.
\end{multline}
Note that $\eta(x, t, \lambda, \xi) \to 0$ uniformly on  $x, t  \in [\alpha , T]$ as $\xi \to \lambda.$  Choose $\delta_0 \in (0, \delta)$ such that for $\xi \in B_{\delta_0}(\lambda)$ and $x, t  \in [\alpha , T],$ $|\eta(x, t, \lambda, \xi)| \le \varepsilon_0 /(2N).$  If $x \le T,$ we have 
\begin{equation} \label{ineq eta}
\int_x^{\max\{x, T\}} |f(t, \lambda)| |\eta(x, t, \lambda, \xi)|\,dt \le \frac{\varepsilon_0}{2}, \quad \xi \in B_{\delta_0}(\lambda).
\end{equation}
 If $x > T,$ then $\int_x^x = 0,$ and~\eqref{ineq eta} holds as well.

For $t \ge \max\{x, T\}$ and $\xi \in B_{\delta_0}(\lambda),$ we apply the inequality
$$\left| \frac{e^{-\xi \beta z(x, t)} - e^{-\lambda\beta z(x, t)}}{\xi - \lambda}\right| \le \max_{\nu \in \overline{B_{\delta_0}(\lambda)}}|\beta z(x, t) e^{-\nu \beta z(x, t)}|,$$
being a consequence of the fundamental theorem of calculus for segments in the complex plane.
Using this inequality along with~\eqref{strong estimate}, for $\xi \in B_{\delta_0}(\lambda),$ we obtain
\begin{multline} \label{ineq eta2}
\int_{\max\{x, T\}}^\infty |f(t, \lambda)| |e(x, t, \lambda, \nu)| \, dt \le 
2 |\beta| \int_{\max\{x, T\}}^\infty |f(t, \lambda)| \max_{\nu \in \overline{B_{\delta_0}(\lambda)}}|z(x,t) e^{-\nu \beta z(x,t)}|\, dt \le\\
\le \frac{2 |\beta|}{\varepsilon e} \int_T^\infty  |f(t, \lambda)| \,dt < \frac{\varepsilon_0}{2}.
\end{multline}
Combining the estimates~\eqref{unif int}--\eqref{ineq eta2}, we get that  $\left|\int_x^\infty f(t, \lambda) \eta(x, t, \lambda, \xi)\,dt\right| \le \varepsilon_0,$ $\xi \in B_{\delta_0}(\lambda),$ $x \ge \alpha.$ 
Thus, the third integral in~\eqref{splitting} tends to $0$ uniformly on $x.$ Analogously, the other two integrals in~\eqref{splitting} tend to $0$ uniformly on $x.$ Then, we arrive at relation~\eqref{limit}, which means holomorphy at $\lambda \in \Lambda^\alpha.$
\end{proof}

\begin{remark} We consider systems~\eqref{sys} with matrices $C(x, \lambda)$ whose components are holomorphic mappings of $\lambda \in \mathbb C_0$ to $L[0, \infty)$ under condition~\eqref{C con}. One can show that the components with the mentioned properties 
are expanded into the Laurent series:
\begin{equation} \label{cc}
c_{jk}(x, \lambda) = \sum_{\eta=1}^\infty \frac{c_{jk\eta}(x)}{\lambda^\eta}, \quad \lambda \in \mathbb{C}_0, \quad \| c_{jk\eta}\|_{L[0, \infty)} \le \delta^\eta N_\delta , \; \eta \ge 1, \; j,k = \overline{1, n},
\end{equation}
where $\delta >0$ can be chosen arbitrary and $N_\delta > 0.$ In~\cite{stud}, representation~\eqref{cc} was used as a restriction on the coefficients of $C(x, \lambda).$  

To prove~\eqref{cc}, consider a holomorphic mapping $G \colon \mathbb C_0 \to \bf X$ such that $\displaystyle \lim_{\lambda \to \infty} \| G(\lambda)\|_{\bf X} = 0.$
Denote $\xi = \frac{1}{\lambda} \in \mathbb C_0$ and $\tilde G(\xi) = G(\lambda).$ Then, $\tilde G \colon \xi \in \mathbb C_0 \to \bf X$ is holomorphic and $\displaystyle \lim_{\xi \to 0} \big\| \tilde G(\xi)\big\|_{\bf X} = 0.$
Let us apply the criterion from Theorem~\ref{H}. For any $f \in {\bf X}^*,$ we have that $f\big(\tilde G(\xi)\big)$ is a holomorphic function in $\mathbb C_0$ and $\displaystyle \lim_{\xi \to 0}  f\big(\tilde G(\xi)\big) = 0.$ Put $f\big(\tilde G(0)\big) = 0.$ 
Then, after removing singularity at $\xi = 0,$ the function $f \big(\tilde G(\xi)\big)$ is holomorphic in  $\mathbb C.$

By Theorem~\ref{H}, the mapping $\tilde G$ with the value $\tilde G(0) \equiv 0$ is holomorphic in $\mathbb C.$
It is expanded into the Taylor series, see~\cite[\S9.16--9.17]{hol}:
$$\tilde G(\xi)  = \sum_{\eta = 0}^\infty g_\eta \xi^\eta, \quad \| g_\eta\|_{\bf X} \le \delta^\eta N_\delta, \; \eta \ge 0,$$
wherein $g_0 = \tilde G(0) \equiv 0.$ Substituting $\xi = 1/\lambda,$ for $\tilde G(\xi) = G(\lambda) :=  c_{jk}(x, \lambda),$ we arrive at~\eqref{cc}.

\end{remark}

\begin{thebibliography}{20}
\bibitem{tamarkin} J. Tamarkin,  ``Some general problems of the theory of ordinary linear differential equations and expansion of an arbitrary function in series of fundamental functions'', Math. Zeitschr. {\bf 27} (1), 1--54 (1928). %doi:10.1007/bf01171084 

%\bibitem{marchenko} Z. S. Agranovich and V. A. Marchenko,  {\it The Inverse Problem of Scattering Theory}, (Gordon and Breach, New York, 1964).

\bibitem{nai} M. A. Naimark, {\it Linear Differential Operators} (Nauka, Moscow, 1969;  Ungar, New York, 1967, 1968).

\bibitem{beals} R. Beals, P. Deift, and C. Tomei, {\it Direct and Inverse Scattering on the Line} (AMS, Providence, 1988).

\bibitem{nova}  G. Freiling and  V. A. Yurko, {\it Inverse Sturm--Liouville Problems and Their Applications} (NOVA Science Publishers, New York, 2001).

\bibitem{yur}
V. A. Yurko,  ``Inverse problems of spectral analysis for differential operators and their applications'', J.~Math. Sci. {\bf 98} (3), 319--426 (2000).

\bibitem{bond} N. P. Bondarenko,  ``Inverse spectral problems for arbitrary-order differential operators with distribution coeffcients'', Mathematics {\bf 9} (22), 2989 (2021).

\bibitem{birkhoff} G. D. Birkhoff, ``On the asymptotic character of the solutions of certain linear diferential equations
containing a parameter'', Trans. Amer. Math. Soc. {\bf 9} (2), 219--231 (1908).

\bibitem{langer} G. D. Birkhoff and R. E. Langer, ``The boundary problems and developments associated with a system of
ordinary differential equations of the first order'', Proc. Am. Acad. Arts Sci. {\bf 58}, 49--128 (1923).

\bibitem{yurko-sys} V. A. Yurko, ``An inverse problem for systems of differential equations with nonlinear dependence on the spectral parameter'', Diff. Equat. {\bf 33} (3), 388--394 (1997).

\bibitem{rykh} V. S. Rykhlov,  ``Asymptotical formulas for solutions of linear differential systems of the first order'', Results Math. {\bf 36} (3--4), 342--353 (1999).

\bibitem{vagabov} A. I. Vagabov, ``On the asymptotics with respect to a parameter of solutions of differential systems with coefficients in the class $L_q$'', Diff. Equat. {\bf 46} (1), 17--23 (2010). %https://doi.org/10.1134/S0012266110010039

%\bibitem{konechnaya} Konechnaja, N.N., Mirzoev, K.A., Shkalikov, A.A.: On the asymptotic behavior of solutions to two-term differential equations with singular coefficients. Math. Notes 104(2), 244--252 (2018)

\bibitem{sav} A. M. Savchuk and  A. A. Shkalikov, ``Asymptotic analysis of solutions of ordinary differential equations with distribution coeffcients'', Sb. Math. {\bf 211} (11), 1623--1659 (2020).

\bibitem{sav sad}  A. M. Savchuk and I. V. Sadovnichaya,  ``Spectral analysis of one-dimensional Dirac system with summable potential
and Sturm--Liouville operator with distribution coefficients'',  Contemp. Math. Fund. Direct. {\bf 66} (3), 373--530 (2020) [in Russian].

\bibitem{surveys} A. A. Shkalikov, ``Regular spectral problems of hyperbolic type for a system of first-order ordinary differential equations'', Math. Notes {\bf 110} (5), 806--810 (2021).

\bibitem{kosarev} A. P. Kosarev and A. A. Shkalikov, ``Asymptotics in the spectral parameter for solutions of $2 \times 2$ systems of ordinary differential equations'', Math. Notes {\bf 114} (4), 472--488 (2023).

\bibitem{malamud} 
M. M. Malamud and L. L. Oridoroga ``On the completeness of root subspaces of boundary value
problems for first order systems of ordinary differential equations'', J. Funct. Anal. {\bf  263}, 1939--1980 (2012).

\bibitem{stud} V. A. Yurko, ``Asymptotics of solutions of differential equations with a spectral parameter'', arXiv~--- a free preprint service.
\url{https://arxiv.org/abs/2204.07505}. Accessed 2024. 

\bibitem{mirz-1}  K. A. Mirzoev and A. A. Shkalikov,  ``Differential operators of even order with distribution coefficients'', Math. Notes  {\bf 99}, 779--784 (2016).

\bibitem{mirz-2} K. A. Mirzoev and A. A. Shkalikov, ``Ordinary differential operators of odd order with distribution coefficients'', arXiv~--- a free preprint service [in Russian]. \url{https://arxiv.org/abs/1912.03660}. Accessed 2024.

\bibitem{bond2} N. P. Bondarenko, ``Spectral data asymptotics for the higher-order differential operators with distribution coefficients'', J. Math. Sci. {\bf 266}, 794--815 (2022).

\bibitem{bond3}  N. P. Bondarenko, ``Linear differential operators with distribution coefficients of various singularity orders'', Math. Meth. Appl. Sci. {\bf 46} (6), 6639--6659  (2023).

%\bibitem{bond3}  N. P. Bondarenko, ``Necessary and sufficient conditions for solvability of an inverse problem for higher-order differential operators'', Mathematics {\bf12} (1), 61 (2024). %https://doi.org/10.3390/math12010061

\bibitem{sav-calderon} A. M. Savchuk,  ``The Calderon--Zygmund operator and its relation to asymptotic estimates for ordinary differential operators'', Contemp. Math. Fund. Direct. {\bf 63} (4), 689--702 (2017) [in Russian].

\bibitem{S-L regularization} A. M. Savchuk and A. A. Shkalikov, ``Sturm--Liouville operators with singular potentials'', Math. Notes {\bf 66} (6), 741--753 (1999).

\bibitem{pronska} N. I. Pronska, `` Asymptotics of eigenvalues and eigenfunctions of energy-dependent Sturm--Liouville equations'', Mat. Stud. {\bf 40} (1), 38--52 (2013).

\bibitem{manko} R. O. Hryniv and S. S. Manko, ``Inverse scattering on the half-line for energy-dependent Schrodinger equations'', Inverse Problems {\bf 36} (9), 095002 (2020).

\bibitem{grafakos} L. Grafakos, {\it Modern Fourier Analysis} (Springer, New York, 2009).

 \bibitem{garnett} J. Garnett, {\it Bounded Analytic Functions} (Springer, New York, 2007).
 
\bibitem{conway} J. Conway, {\it Functions of One Complex Variable} (Second ed.; Springer-Verlag, New York, 1973).

\bibitem{hol} S. B. Chae, {\it Holomorphy and Calculus in Normed Spaces} (Marcel Dekker Inc., New York, 1985).

\end{thebibliography}
\end{document}